\def\5n{\negthinspace \negthinspace \negthinspace \negthinspace \negthinspace }
\def\4n{\negthinspace \negthinspace \negthinspace \negthinspace }
\def\3n{\negthinspace \negthinspace \negthinspace }
\def\2n{\negthinspace \negthinspace }
\def\1n{\negthinspace }
 \def\sB{\mathscr{B}}
\def\dbE{\mathbb{E}}   \def\cE{{\cal E}}  
\def\dbF{\mathbb{F}} \def\sF{\mathscr{F}}    
\def\dbG{\mathbb{G}}   \def\cG{{\cal G}}      
   \def\cH{{\cal H}}  
   \def\cI{{\cal I}}  
   \def\cK{{\cal K}}
\def\dbN{\mathbb{N}} \def\sN{\mathscr{N}}    
\def\dbP{\mathbb{P}}     
\def\dbQ{\mathbb{Q}}   \def\cQ{{\cal Q}}  
\def\dbR{\mathbb{R}}     
   \def\cS{{\cal S}}
   \def\cV{{\cal V}}
           \def\Th{\Theta}
\def\ms{\medskip}
\def\ra{\rightarrow}      
\def\no{\noindent}        \def\q{\quad}                      
    \def\qq{\qquad}                    
    \def\hb{\hbox}
  \def\deq{\triangleq}               
            \def\({\Big (}
                  \def\){\Big )}
\def\leq{\leqslant}       \def\geq{\geqslant}
          \def\[{\Big[}
\def\cl{\overline}           \def\]{\Big]}
\def\h{\widehat}          \def\tr{\hbox{\rm tr$\,$}}         \def\cd{\cdot}
\def\wt{\widetilde}              \def\cds{\cdots}
\def\ti{\tilde}           \def\cl{\overline}                 
        \def\ts{\times}                    
\def\pa{\partial}
\def\a{\alpha}           \def\g{\gamma}      \def\o{\omega}
\def\b{\beta}            \def\d{\delta}   \def\F{\Phi}     \def\p{\phi}
\def\z{\zeta}         \def\Th{\Theta}  \def\th{\theta}    \def\si{\sigma}
\def\e{\varepsilon}   \def\L{\Lambda}  \def\l{\lambda}        
              \def\k{\kappa}
\theoremstyle{plain}
\theoremstyle{rmk}
\newtheorem{theorem}{Theorem}[section]
\newtheorem{definition}[theorem]{Definition}
\newtheorem{lemma}[theorem]{Lemma}
\newtheorem{remark}[theorem]{Remark}
\newtheorem{example}[theorem]{Example}
\newtheorem{assumption}{Assumption}
\begin{document}

\title{\bf
	Maximum Principle of  Stochastic Optimal Control Problems with Model Uncertainty\thanks{This paper is supported by	National Key R\&D Program of China (Grant No.  2022YFA1006102),	National Natural Science Foundation of China (Grant Nos. 72171133 and 12471418), 	Natural Science Foundation of Shandong Province (Grant Nos. ZR2024MA039 and ZR2022MA029), Guangdong Basic and Applied Basic Research Foundation (Grant No.  2025B1515020091), Shenzhen Fundamental Research General Program   (Grant No.  JCYJ20230807093309021).}}

\author{Tao Hao\thanks{School of Statistics and Mathematics, Shandong University of Finance and Economics, Jinan 250014, China (Email: {\tt taohao@sdufe.edu.cn}).}~,~~~
%
Jiaqiang Wen\thanks{Department of Mathematics  and SUSTech International Center for Mathematics, Southern University of Science and Technology, Shenzhen 518055, China (Email: {\tt wenjq@sustech.edu.cn}).}~,~~~
Jie Xiong\thanks{Department of Mathematics and SUSTech International Center for Mathematics, Southern University of Science and Technology, Shenzhen 518055,  China
 (Email: {\tt xiongj@sustech.edu.cn}).}
%
}

\date{}
\maketitle

\no\bf Abstract. \rm
This paper is concerned with the maximum principle of  stochastic optimal control problems, where the coefficients of  the state equation and the cost functional are uncertain, and the system is generally under Markovian regime switching. Firstly, the $L^\b$-solutions of forward-backward stochastic differential equations  with regime switching are given. Secondly,  we obtain the variational inequality by making use of   the continuity of solutions to variational equations with respect to  the  uncertainty parameter $\th$. Thirdly, utilizing the linearization and weak convergence techniques, we prove the necessary stochastic maximum principle and provide sufficient conditions for the stochastic optimal control. Finally, as an application, a risk-minimizing portfolio selection problem is studied.

\ms

\no\bf Key words:\rm\   Forward-backward stochastic differential equation, maximum principle, regime switching, model uncertainty, partial information


\ms

\no\bf AMS subject classifications. \rm 93E20, 60H10, 60H30.

\section{Introduction}\label{Introduction}

       Let $T>0$ and let $(\Omega,\sF, \dbF, \dbP)$ be a complete filtered probability space on which a standard  2-dimensional  Brownian motion
       $(W(\cd), G(\cd))$ and a continuous-time  Markov chain $\a(\cd)$  with a finite state space $\cI=\{1,2,\cd\cd\cd, I\}$
         are defined. $(W(\cd),G(\cd))$ and $\a(\cd)$ are independent.
         The generator of  Markov chain $\a(\cd)$ is denoted by $\L=(\l_{ij})_{i,j\in \cI}$ satisfying $\l_{ij}\geq0$, for $i\neq j\in \cI$  and $\sum_{j=1}^I\l_{ij}=0$, for every $i\in \cI$. For each $s>0$, we set
          $$\begin{aligned}
             \sF^\a_s&=\si\{\a(r): 0\leq r\leq  s\}\vee \sN,\q  \sF_s^{W,\a}=\si\{W(r),\a(r): 0\leq r\leq  s\}\vee \sN,\\
             \q  \cG_s&=\si\{ G(r) : 0\leq r\leq  s\}\vee \sN,\q \sF_s=\si\{W(r),G(r), \a(r): 0\leq r\leq  s\}\vee \sN,
          \end{aligned}$$
          where $\sN$ is the set of all $\dbP$-null subsets. Denote $\dbG=\{\cG_s\}_{s\in[0,T]} $.  $\dbF^\a,  \dbF^{W,\a}, \dbF$ can be
          understood similarly.
       By  $\Th$ we denote the set of all market states,
      which is  a locally compact, complete separable space with distance $d$, and $\cQ$  the set of all possible probability distributions of $\th\in\Th$.
       For $i,j\in \cI$,
      let $M_{ij}(t), i,j=1,\cds,I$ be a purely discontinuous and square integrable martingale with respect to $\sF^\a_t$.
      For a collection of $\dbF^\a$-progressively measurable functions $K(t) =( K_{ij}(t))_{i,j\in\cI}, t\geq0,$
               we denote $K(t)\bullet dM(t)=\sum_{i,j=1,i\neq j}^IK_{ij}(t)dM_{ij}(t)$.
   By $\dbE[\cd]$ we denote the expectation under the probability $\dbP$. Let $V$ be a nonempty convex subset of $\dbR^l$.

                In present work, we consider  the following controlled forward-backward stochastic differential equation (FBSDE, for short): for $t\in[0,T]$,

\begin{equation}\label{4.1}
 \left\{  \begin{aligned}
               dX^v_\th(t) & =  b_\th(t,X^v_\th(t),v(t),\a(t-))dt + \si_\theta(t,X^v_\th(t),v(t),\a(t-))dW(t) \\
                              & \q +\bar{\si}_\th(t,X^v_\th(t),v(t),\a(t-)) d\cl W^v_{\th}(t)
                           +\b_\th(t,X^v_\th(t-),v(t),\a(t-)) \bullet dM(t),\\
                  dY^v_\th(t) & = -f_\th(t,X^v_\th(t),Y^v_\th(t),Z^v_\th(t),\bar{Z}^v_\th(t), \sum_{i,j=1,i\neq j}^IK^v_{\th,ij}(t)
                                     \l_{ij}\mathbf{1}_{\{\a(t-)=i\}},v(t),\a(t-))dt\\
                               &\q  +Z^v_\th(t)dW(t) + \bar{Z}^v_\th(t)dG(t) + K^v_\th (t)\bullet dM(t),\\
                   X^v_\th(0)& = x, \q     Y^v_\th(T) = \F_\th(X^v_\th(T), \a(T)),
                \end{aligned}  \right.
               \end{equation}
               where $x\in \dbR$; $v(\cd)$ is a control; $\th$ comes from $\Th$;  the assumption on
               $(b_\th,\si_\th,\bar{\si}_\th,\b_\th,f_\th, \F_\th)$ refers to \autoref{assumption3};
               the process
               $\cl {W}^v_{\th}(\cd)$ satisfies

               \begin{equation}\label{4.2}
                 \left\{ \begin{aligned}
                  d\cl {W}^v_{\th}(t)&=-h_\th(t,X^v_\th(t), \a(t-))dt+dG(t),\q t\in[0,T], \\
                   \cl {W}^v_{\th}(t) &=0,
                 \end{aligned}\right.
               \end{equation}
               with $h_\th(\cd)$ being a continuous bounded map (see \autoref{assumption3} for more details).
             Note that  the process $\cl {W}^v_{\th}(\cd)$ depends on the control $v(\cd)$ and the parameter $\th$.
               Suppose that the state process $(X^v_\th(\cd),Y^v_\th(\cd),Z^v_\th(\cd),\bar{Z}^v_\th(\cd),K^v_\th(\cd) )$ cannot be directly observed.
                Instead, we can observe a related process $G(\cd)$. Hence, the control is $\dbG$-adapted, as described below.
\begin{definition}
A process $v:\Omega\ts [0,T]\ra V$ is called to be an admissible control if $v(t)$ is $\cG_t$-adapted, and satisfies $\sup_{t\in[0,T]}\dbE[|v(t)|^4]<\infty$. The set of admissible controls is denoted by $\cV_{ad}$.
\end{definition}
            Additionally,  thanks to Girsanov theorem, we can define an equivalent probability by
            \begin{equation}\label{4.6-111}
             d \overline{\dbP}^v_\th:=R^v_\th(t)d\dbP \q \text{on}\q  \mathcal{F}_t,
             \end{equation}
              for each $t\in[0,T]$, where $R^v_\th(\cd)$ is the solution to the following stochastic differential equation (SDE):

             \begin{align}\label{4.7-1}
               \begin{aligned}
                 dR^v_\theta(t)=R^v_\theta(t)h_\theta(t,X^v_\theta(t), \alpha(t-))dG(t),\  t\in[0,T],\q \text{and}\q
                 R^v_\theta(0)=1.
               \end{aligned}
             \end{align}
              Clearly,  $(W(\cd),\cl W^v_\th(\cd))$ is a $2$-dimensional standard Brownian motion under the  probability   $\overline{\dbP}^v_\th$.
              By $\dbE^{\bar{\dbP}^v_\th}[\cd]$ we denote the expectation under the probability $\bar{\dbP}^v_\th$.
              %
              Under \autoref{assumption3} below, the above equations (\ref{4.1}), (\ref{4.2}) and (\ref{4.7-1}) admit unique solutions.
              This implies that for each $\th\in \Th$ (or, say, for each market state), similar to   classical optimal control problems
              (see \cite{Wang-Wu-Xiong-13,Xiong-Zhou-2007,Zhang-Xiong-Liu-2018}),
               one can consider the cost functional
               \begin{equation*}
               J_\th(v(\cd)):=\dbE^{\bar{\dbP}^v_\th}[\Psi_\th(X_\th^v(T))+\L_\th(Y^v_\th(0))],
               \end{equation*}
               where the assumption on  $(\Psi_\th(\cd), \L_\th(\cd))$ {are given in} \autoref{assumption4}.
               However, the parameter $\th$ is different
               for different market states.
               Obviously, it is unreasonable to only study the cost functional $J_\th(v(\cd))$ for some given $\th$.
               Inspired by the spirit of Hu and Wang \cite{Hu-Wang-20}, we prefer to consider the following robust cost functional
              \begin{align}\label{4.2222}
               J(v(\cd))=\sup_{Q\in \cQ}\int_{\Th} J_\th(v(\cd)) Q(d\th)
              \end{align}
              instead of $J_\th(v(\cd))$ for some given $\th$. In this setting,
              our optimal control problem can be  described as follows.

\ms

               {\bf Problem (PRUC)}. Find a  $\cG_t$-adapted optimal control $\h{v}(\cdot)$ such that
               \begin{align} \nonumber
               J(\h v(\cd))=\inf_{v(\cd)\in \cV_{ad} } J(v(\cd)),
               \end{align}
               subject to (\ref{4.1}) and (\ref{4.2}).


\subsection{Motivation}

 Let us explain the motivation for studying the above optimal control problem by using an example from finance: specifically, finding risk-minimizing portfolios under model uncertainty. Note that the risk is characterized in terms of $g$-expectation (see Peng \cite{Peng-2004}). A similar model, which does not incorporate regime switching, is examined by Xiong and Zhou \cite{Xiong-Zhou-2007}, as well as by Wang, Wu, and Xiong \cite{Wang-Wu-Xiong-13} and Zhang, Xiong, and Liu \cite{Zhang-Xiong-Liu-2018}.

\begin{example}\label{ex 1.1}
Assume that a market consists of a bond and a stock. Let $v(\cd)$ be the amount invested in the stock.
Furthermore,  suppose that the share market is roughly divided into either a bull or  bear market. Let $\th=1$ and 2 denote a bull market and a bear market, respectively, which is characterized as  $\th\in\Th=\{1,2\}$.  Let
$\cQ=\{Q^\l: \l\in[0,1]\},$
where $Q^\l$ is the  probability  such that $Q^\l(\{1\})=\l$ and $Q^\l(\{2\})=1-\l.$
Let $(\Omega,\sF, \dbF, \overline{\dbP}^v_\th)$ be a probability space, where  the probability $\overline{\dbP}^v_\th$ depends on the process $v(\cd)$ and the parameter $\th$.
Let $(W(\cd),\cl W^v_\th(\cd))$ be a 2-dimensional standard Brownian motion
and the process  $\a(\cd)$ with a finite state space $\cI=\{1,2,\cd\cd\cd, I\}$   a continuous-time Markov chain
 under the probability $\overline{\dbP}^v_\th$.
%

\ms

Assume that the prices of  the  bond and the stock  are $S_0(t)$ and $S_1(t)$, respectively, which are driven by the following equations:
\begin{equation*}
dS_{0}(t)=S_{0}(t)r(t)dt \q\hb{and}\q
dS_{1}(t)=S_{1}(t)\{\mu(t)dt+\si(t)d\overline{W}^v_\th(t)\}\q
(\hb{\hb{\rm under}}\  \overline{\dbP}_\th^v), \q t\in[0,T],
\end{equation*}
where $r(t)$ is the interest rate;
$\mu(t)$ is the appreciation rate with $\mu(t)>r(t)$ for $t\in[0,T]$; $\si(t)\neq0$ is the volatility.
Under a
self-financed portfolio, the wealth process of an agent, starting with an initial
wealth $x_0$, satisfies the following  equation:
\begin{equation}\label{1.2}
\left\{
\begin{aligned}
 x_\th^v(0)&=x_0,\\
 dx^v_\th(t)&=\{r(t)x_\th^v(t)+(\mu(t)-r(t))v(t)\}dt+\si(t)v(t)d\overline{W}_\th^v(t)-\b(t)\bullet dM(t), \q t\in[0,T],
\end{aligned}
\right.
\end{equation}
where  {$v(\cd)$ represents the investment strategy}; $\b(t)>0,\ t\in[0,T];$  $M_{ij}(\cd),\  i,j\in \cI$ is the canonical martingale of the  Markov chain  $\a(\cd)$;
$\b(t)\bullet dM(t):=\sum_{i,j=1,i\neq j}^I\b_{ij}(t)dM_{ij}(t)$  represents the  cumulative cost caused by regime switching jumps up to time $t$.
Define $G(t):= \frac{1}{\si(t)}\log S_{1}(t)$. It is an observation process since the policymaker can obtain information from the stock price, which satisfies
\begin{equation}\label{1.3}
dG(t)=\frac{1}{\si(t)}\Big\{\mu(t)-\frac{1}{2}\si^2(t)\Big\}dt+d\overline W^v_\th(t)\q \hb{and}\q
G(0)=0.
\end{equation}
Note that,  in a real market, it is reasonable to assume the amount $v(t)$ being adapted to the filtration $\cG_t:= \si\{G(r): 0\leq r\leq  t\}$.

For any $\xi\in L^2(\sF_T; \dbR^m)$, by $(y^v_\th(\cd),z^v_\th(\cd), \bar{z}^v_\th(\cd), k^v_\th(\cd))$ we denote the solution to the following backward stochastic differential equation (BSDE, for short) with regime switching
\begin{equation}\label{1.4}
\left\{
\begin{aligned}
dy^v_\th(t) & = -g_\th(t,y^v_\th(t),z^v_\th(t), \bar{z}^v_\th(t),\sum_{i,j=1,i\neq j}^Ik^v_{\th,ij}(t)
                                     \l_{ij}\mathbf{1}_{\{\a(t-)=i\}})dt\\
                               &\q  +z^v_\th(t)dW(t) + \bar{z}_\th^v(t)dG(t) + k^v_\th (t)\bullet dM(t),\\
  y^v_\th(t)&=\xi,
  \end{aligned}
\right.
\end{equation}
where $g_\th(\cd)$ is a suitable map. Furthermore,
if $g_\th(s,y,0,$ $0,0)=0, (s,y)\in[0,T]\ts \dbR$,
     following the spirit of Peng \cite{Peng-2004}, we
    can define the   $g$-expectation:
    $$
     \cE_{g,\th}^v[\xi]:=y^v_\th(0).
     $$
     $\cE_{g,\th}^v[\cd]$ is a nonlinear expectation, which has almost the properties of classical expectation $\dbE[\cd]$, except
     for linearity.
      Taking $\xi=-x^v_\th(T)$, the $g$-expectation $\cE_{g,\th}^v[-x^v_\th(T)]$ could be regarded as   a risk measure of $x^v_\th(T)$, see Rosazza and Gianin \cite{Rosazza-Gianin-2006}.
Next, we consider  the following cost functional
    \begin{equation*}
    \begin{aligned}
     J_g(v(\cd))&=\max\bigg\{\cE_{g,1}^v[-x^v_1(T)], \cE_{g,2}^v[-x^v_2(T)]\bigg\},
     \end{aligned}
     \end{equation*}
 which characterizes the maximum risk under all  market  states.
The objective is to minimize $J_g(v(\cd))$, which is called a robust optimal control problem (see Hu and Wang \cite{Hu-Wang-20}).
Moreover,  according to the definition of $g$-expectation,   the above cost functional  can be reformulated as
   \begin{equation*}
    \begin{aligned}
     J_g(v(\cd))&=\max\Big\{y_1^v(0), y_2^v(0)\Big\}=  \sup_{\l\in[0,1]}\Big\{ \l y^v_1(0)+(1-\l) y^v_2(0) \Big\}=\sup_{Q^\l\in \cQ}\int_\Th y_\th^v(0) Q^\l(d\th).
     \end{aligned}
     \end{equation*}
Thereby, we  deduce a new framework that is considering a risk-minimizing portfolio selection problem under model
uncertainty.

{\rm\bf  Problem}. Find a $\cG_t$-adapted process $\widehat{v}(t)$ such that
\begin{equation*}
J_g(\widehat{v}(\cd))=\inf\limits_{v(\cd) }J_g(v(\cd))
=\inf\limits_{v(\cd) }\sup_{Q^\l\in \cQ}\int_\Th y_\th^v(0) Q^\l(d\th),
\end{equation*}
subject to (\ref{1.2})-(\ref{1.4}).
\end{example}

\subsection{Literature Review}
Classical stochastic control theory appeared with the advent of stochastic analysis and has developed rapidly in recent decades due to its wide range of applications (see Yong and Zhou \cite{Yong-Zhou-99}). One of the primary approaches to solving stochastic optimal control problems is Pontryagin's maximum principle, which renders these problems more tractable. The fundamental idea of the stochastic maximum principle is to establish a set of necessary and sufficient conditions that must be met by any optimal control.
In the seminal paper \cite{Peng-1990}, Peng formulated a global maximum principle for stochastic optimal control problems, building on the nonlinear backward stochastic differential equations introduced by Pardoux and Peng \cite{Pardoux-Peng-1990}. Since then, many researchers have extensively studied the stochastic maximum principle using systems of forward-backward stochastic differential equations (FBSDEs). For further details, we refer  readers to \cite{Hu-Ji-Xu-2022, Hu-Ji-Xue-2018, Hu-Wang-20, Wu-2013} and the references therein.
%

Recently, there has been a dramatic increase in interest in studying the stochastic maximum principle for optimal control problems involving random jumps, such as Poisson jumps (see Tang and Li \cite{TL}) or regime-switching jumps. These approaches are of practical importance in various fields, including economics, financial management, science, and engineering.
For instance, one might encounter two market regimes in financial markets: one representing a bull market with rising prices, and the other representing a bear market with falling prices. This scenario can be modeled as a regime-switching model, where market parameters depend on modes that switch among a finite number of regimes.
More recently, the applications of the stochastic maximum principle to optimal control problems with regime-switching systems or Poisson jumps have been extensively developed. For example, Zhang, Elliott, and Siu \cite{Zhang-Elliott-Siu-2012} studied a stochastic maximum principle for a Markovian regime-switching jump-diffusion model and its applications in finance. Li and Zheng \cite{Li-Zheng-15} considered the weak necessary and sufficient stochastic maximum principle for Markovian regime-switching systems.  Zhang, Sun, and Xiong \cite{Zhang-Sun-Xiong-2018} established a general stochastic maximum principle for mean-field Markovian regime-switching systems. Zhang, Xiong, and Liu \cite{Zhang-Xiong-Liu-2018} examined a stochastic maximum principle for partially observed forward-backward stochastic differential equations (FBSDEs) with jumps and regime switching. Sun, Kemajou-Brown, and Menoukeu-Pamen \cite{Sun-Kemajou-Menoukeu-2018} investigated a risk-sensitive maximum principle for a Markov regime-switching system. Dong, Nie, and Wu \cite{Dong-Nie-Wu-2022} derived the maximum principle for mean-field stochastic control problems with jumps. Wen et al. \cite{Wen-Li-Xiong-Zhang-2022} explored related stochastic linear-quadratic optimal control problems involving Markovian regime-switching systems and their applications in finance.
For additional important works, we refer readers to Donnelly and Heunis \cite{Donnelly-Heunis-2012}, {{\O}ksendal and Sulem } \cite{Oksendal-Sulem-2005}, Song, Stockbridge, and Zhu \cite{Song-Stockbridge-Zhu-2011}, Zhang and Zhou \cite{Zhang-Zhou-2009}, and the references therein.


From \autoref{ex 1.1}, we know that in a real market, it is more reasonable and general to allow for model uncertainty alongside the Markov chain, as these uncertainties can significantly impact interest rates, stock prices, and volatilities.
{For example,
Maenhout \cite{Maenhout-2004}  proposed a new approach to the dynamic portfolio and consumption of  investors in the context where they are concerned about model uncertainty.
Maenhout \cite{Maenhout-2006} gave the optimal portfolio choice under model uncertainty and stochastic premia. {\O}ksendal, and  Sulem \cite{Oksendal-Sulem-2012} analyzed forward-backward stochastic differential games and stochastic control under model uncertainty.
Yi et al. \cite{Yi-2013} discussed  a robust optimal reinsurance and investment problem  for an  insurer  worried about model uncertainty.
Menoukeu-Pamen \cite{Menoukeu-2017} investigated  a problem of robust utility maximization under a relative entropy penalty and gave optimal
investment of an insurance firm under model uncertainty.
}
However, there are few cases where both model uncertainty and regime switching work are considered simultaneously.
In this paper, we aim to study the maximum principle for a Markovian regime-switching system under model uncertainty. To achieve a general result, we prefer to focus on the broader scenario where the state equation is governed by a forward-backward stochastic differential equation (FBSDE) system with partial information, as there are numerous scenarios involving partial information in financial models. For relevant optimal control problems with partial information, we refer  readers to \cite{Menoukeu-2017, Tang-1998, Wang-Wu-2009, Wang-Wu-Xiong-13, Xiong-Zhou-2007}, etc.

\subsection{The Contribution of This Paper}
            We now present our main contributions and difficulties in detail.

\begin{itemize}

\item [(i)]
             The optimal control problem with partial information and regime switching under model uncertainty ({\bf Problem (PRUC)}) is
formulated. The critical approaches in the maximum principle investigation are linearization and weak convergence techniques.
First, notice that the cost functional $J(v(\cd))$ involves the probability $\bar{\dbP}^v_\th$.
To study  {\bf Problem (PRUC)}, making use of Bayes' formula, we rewrite the cost functional (\ref{4.2222}) as a new form associated with the probability $\mathbb{P}$ (see (\ref{4.10})), which does not depend on $v$ and $\th$.
Second, for the sake of obtaining the variational inequality (\autoref{le 5.3}),  we need to show the continuity of solutions to the variational equation of FBSDE  (\ref{4.1}) with respect to $\th$ (\autoref{le 6.3-0}).
The variational inequality is established based on the above result (\autoref{le 5.3}).
Third, the primary material to develop the necessary maximum principle, i.e., adjoint equation,  is introduced.  Fubini's theorem proves the stochastic maximum principle by investigating the boundness and continuity of its solutions with respect to $\th$.
Finally,  sufficient conditions for  optimal control $\h v$ are obtained. Its proof mainly depends on some convex assumptions on the Hamiltonian.

\item [(ii)] We propose   risk-minimizing portfolio selection problems in a general framework. Applying the above obtained theoretical results for  a special risk-minimizing portfolio selection problem, its explicit solution  is given.
           %
\end{itemize}
 Since Markovian regime switching jumps and partial information appear in our model, compared with
                  Hu and Wang \cite{Hu-Wang-20},  the difficulties of the present work  mainly come from the boundness
                  and continuity of  solutions  to    variational equations.

\begin{itemize}
	
%

\item [(iii)]
        On the one hand, since partial information is considered in our model, the original cost functional (\ref{4.2222}) depends on the
                   probability $\bar{\dbP}^v_\th$. It is difficult to obtain the variation of the cost functional  (\ref{4.2222}) directly,  since the probability  $\bar{\dbP}^v_\th$ changes when the control $v$ changes. In order to avoid this obstacle, we apply
                   Bayes's formula to rewrite the above cost functional as a new cost functional (\ref{4.10}), which depends on the probability $\mathbb{P}$, but not $\bar{\dbP}^v_\th$. Following this line, \textbf{Problem} (\textbf{PRUC}) is equivalent to minimizing
                   the new optimization problem (\ref{4.3})-(\ref{4.7-1})-(\ref{4.10}).

\item [(iv)] On the other hand, since the cost functional in Hu and Wang \cite{Hu-Wang-20} only involves $Y^v_\th(0)$, relatively speaking,
                    the variation of the cost functional follows directly.
                   But   the cost functional (\ref{4.10}) in the present work contains  $R^v_\th(T), X_\th^v(T), Y^v_\th(0)$, the calculation of
                    variation of the cost functional (\ref{4.10}) is more involved. For example, the boundness and  continuity
                    of the triple $(R^v_\th(T), X_\th^v(T), Y^v_\th(0))$ with respect to $\th$,
                    the boundness and  continuity
                    of the function $H(\th)$  (see (\ref{4.9999}))  with respect to $\th$,
                    the boundness and continuity of solutions to the  adjoint equations (\ref{4.36}) and (\ref{4.37}) with respect to $\th$,
                    and so on. Besides, owing to the presence of partial information and Markovian regime switching, we propose  new
                    adjoint equations, which are different from those of Hu and Wang \cite{Hu-Wang-20}, and  Wang, Wu, and Xiong \cite{Wang-Wu-Xiong-13}.

\end{itemize}

{
Note that the uncertainty of our model is different from that in the situation described in the work \cite{Maenhout-2004,Maenhout-2006,Yi-2013,Menoukeu-Momeya-2017,Oksendal-Sulem-2012}.
Indeed, Maenhout \cite{Maenhout-2004,Maenhout-2006} and Yi et al. \cite{Yi-2013} observed that decision-makers often adopt a ``reference model" while remaining cautious of its potential misspecification. Under certain suitable assumptions, this model uncertainty corresponds to uncertainties concerning the drift coefficient (see, for example, Maenhout \cite[Eq. (12)]{Maenhout-2006}).
Another aspect of model uncertainty is Knightian uncertainty, which refers to uncertainty regarding the underlying probability measure. In this framework, the cost functional is examined under the expectation $\dbE^{\dbQ}$, where the probability $\dbQ$ is absolutely continuous with respect to a given probability $\dbP$.
In the works of Menoukeu-Pamen and Momeya \cite{Menoukeu-Momeya-2017}, and  {\O}ksendal and Sulem \cite{Oksendal-Sulem-2012}, the authors assumed that the Radon-Nikodym derivative $M$ of the probability $\dbQ$ with respect to $\dbP$ is an It\^o process, parameterized by $\th$. It is important to note that in their studies, the coefficients of their state equations are treated as deterministic.
In contrast to their works, our model features not only uncertainties in the drift coefficients of the state equations and the coefficients of the observation processes (i.e., the Radon-Nikodym derivative), but also incorporates uncertainties in  diffusion coefficients, which are also parameterized by $\th$.
}

The paper is organized as follows. In  \autoref{Sec2}, we introduce some spaces and prove the existence and uniqueness of FBSDEs with regime switching.  \autoref{Sec3} discusses the equivalent problem of \textbf{Problem} (\textbf{PRUC}). The necessary maximum principle and sufficient conditions are presented in  \autoref{Sec4}. In  \autoref{Sec5}, we study  a risk-minimizing portfolio selection problem. Finally, some conclusions are drawn in  \autoref{Sec6}.

\section{Preliminaries} \label{Sec2}


By $\dbR^n$, we denote the $n$-dimensional real Euclidean space, and by $\dbR^{n\times d}$, we denote the set of all $n \times d$ real matrices. The set of natural numbers is represented by $\dbN$. The scalar product of $C = (c_{ij}), D = (d_{ij}) \in \dbR^{n \times d}$ is defined as $\langle C, D \rangle = \tr\{CD^\top\}$, where the superscript $\top$ indicates the transpose of vectors or matrices.
Additionally, we consider a continuous-time Markov chain $\a(\cd)$ with a finite state space $\cI = \{1, 2, \cdots, I\}$. The generator of the Markov chain $\a(\cd)$ is denoted by $\L = (\l_{ij}){i,j \in \cI}$, which satisfies $\l_{ij} \geq 0$ for $i \neq j \in \cI$ and $\sum_{j=1}^I \l_{ij} = 0$ for every $i \in \cI$. For each pair $(i,j) \in \cI \times \mathcal{I}$ with $i \neq j$, we define
\begin{equation*}\label{2.4}
	[M_{ij}](t)=\sum_{0\leq r\leq t} \mathbf{1}_{\{\a(r-)=i\}}\mathbf{1}_{\{\a(r)=j\}},\q
	\langle  M_{ij} \rangle(t)=\int_0^t \l_{ij} \mathbf{1}_{\{\a(r-)=i\}}dr,
\end{equation*}
where $\mathbf{1}_A$ denotes the indicator function of the set $A$. According to \cite{Donnelly-Heunis-2012, Li-Zheng-15, Nguyen-Yin-Nguyen-2021}, the process
$M_{ij}(t):=[M_{ij}](t)-\langle M_{ij}\rangle(t)$ is a discontinuous and square-integrable martingale with respect to the filtration $\sF^\a_t$, and it equals zero at the starting point.
Furthermore, the process $[M_{ij}]=([M_{ij}](t))_{t\in[0,T]}$ represents the optional quadratic variation, while $\langle M_{ij}\rangle=(\langle M_{ij}\rangle(t))_{t\in[0,T]}$  denotes the quadratic variation. By the definition of optional quadratic covariations, we have
\begin{equation*} \label{2.6}
	[M_{ij},W]=0,  \quad [M_{ij}, M_{mn}]=0,\  \text{as}\  (i,j)\neq(m,n).
\end{equation*}
For simplicity, we define $M_{ii}(t)=[M_{ii}](t)=\langle M_{ii}\rangle(t)=0$, for each $i\in \cI$ and for $t\in [0,T]$.
For $S=M,[M],\langle M\rangle,$  we denote
\begin{equation*}
	\begin{aligned}
		\int_0^tK(r)\bullet dS(r)&=\int_0^t\sum_{i,j=1,i\neq j}^IK_{ij}(r)dS_{ij}(r), \q
		K(r)\bullet dS(r)&=\sum_{i,j=1,i\neq j}^IK_{ij}(r)dS_{ij}(r).
	\end{aligned}
\end{equation*}

The following spaces are used frequently. For $\beta\geq2,$ we define:

\begin{itemize}
	\item
	$L^\b(\sF_t;\dbR^n)$ is the family of $\dbR^n$-valued $\sF_t$-measurable random variables $\xi$ with $\dbE|\xi|^\b<\infty.$
	\item ${\cS}_{\dbF}^{\b}(0,T;\dbR^n)$ is the family of $\dbR^n$-valued $\dbF$-adapted c\`{a}dl\`{a}g processes $(\psi(t))_{0\leq t\leq
		T}$  with  $$\|\psi(\cd)\|_{\cS_\dbF}=\dbE\[\mathop{\rm sup}\limits_{0\leq t\leq T}| \psi(t) |^{\b}\Big]^\frac{1}{\b}< +\infty.$$
	\item $\cH_{\dbF}^{1, \beta}(0,T;\dbR^{n})$ is the family of $\dbR^n$-valued $\dbF$-progressively measurable processes $(\psi(t))_{0\leq t\leq T}$ with       $$\dbE\[\(\int^{T}_{0} |\psi(t)|dt\)^\beta\]<+\infty.$$
	\item $\cH_{\dbF}^{2, \frac{\b}{2}}(0,T;\dbR^{n})$ is the family of $\dbR^n$-valued $\dbF$-progressively measurable processes $(\psi(t))_{0\leq
		t\leq T}$ with     $$\|\psi(\cd)\|_{\cH_\dbF}=\dbE\[\(\int^{T}_{0} |\psi(t)|^2dt\)^\frac{\b}{2}\]^{\frac{1}{\b}}<+\infty.$$
	\item $\cK_{\dbF}^{\b,1}(0,T;\dbR^n)$ is the family of $K(\cd)=(K_{ij}(\cd))_{i,j\in \cI}$ such that the $\dbF$-progressively
	measurable processes $K_{ij}(\cd)$ satisfies  $K_{ii}(t)=0, t\in[0,T]$ and   $$\dbE\[\int^T_0 \sum_{i,j=1, i\neq j}^I |K_{ij}(t)|^\b\l_{ij}\mathbf{1}_{\{\a(t-)=i\}}dt \]< \infty.$$
	\item $\cK_{\dbF}^{2,\frac{\b}{2}}(0,T;\dbR^n)$ is the family of $K(\cd)=(K_{ij}(\cd))_{i,j\in \cI}$ such that the $\dbF$-progressively
	measurable processes $K_{ij}(\cd)$ satisfies  $K_{ii}(t)=0, t\in[0,T]$ and
	$$\|K(\cd)\|_{\cK_\dbF}:=\dbE\[\(\int^T_0 \sum_{i,j=1, i\neq j}^I |K_{ij}(t)|^2\l_{ij}\mathbf{1}_{\{\a(t-)=i\}}dt \)
	^\frac{\b}{2}\]^{\frac{1}{\b}}< \infty.$$
\end{itemize}
%

\subsection{FBSDEs with Regime Switching}

In this subsection, we set $\b \geq 2$ and recall the notations $W$, $M$, $\a$, $K \bullet M$, and $\dbF^{W, \a}$ from  \autoref{Introduction}.

{To study the well-posedness of  SDEs with regime switching and BSDEs with regime switching, let us first introduce a lemma.
	Similar to the proof of Kunita's inequality (see  Kunita \cite[pp. 333-334]{Kunita-2004}), we have the following estimate with
	respect to  stochastic calculus related to the canonical martingale for  Markov chain.}

\begin{lemma}\label{le 6.0}\rm
  {
  	 For  $\p(\cd)=(\p_{ij}(\cd))_{i,j\in\cI}\in\cK_{\dbF^\a}^{\b,1}(0,T;\dbR^m)$, there exists a positive constant $C$ depending on  $T,\b,\sum_{i,j=1,i\neq j}^I\l_{ij}$ such that, for $0\leq t\leq T$,}
              \begin{align}\nonumber
              \begin{aligned}
            {  \dbE\bigg[ \sup\limits_{0<s\leq t}\(\int_0^s|\p(r)|\bullet d M(r) \)^\b\bigg]
              \leq C
               \dbE\bigg[ \int_0^t \sum_{i,j=1,i\neq j}^I|\p_{ij}(s)|^\b  \l_{ij}\mathbf{1}_{\{\a(s-)=i\}}ds \bigg] .}
              \end{aligned}
              \end{align}
\end{lemma}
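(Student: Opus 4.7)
The plan is to imitate Kunita's classical argument for pure jump martingales, replacing the Lévy jump measure by the compensator of the Markov chain's martingale $M$. Write $X(s) := \int_0^s |\varphi(r)| \bullet dM(r)$. Because $[M_{ij},M_{mn}] = 0$ for $(i,j)\neq(m,n)$, $X$ is a purely discontinuous, square-integrable $\dbF^\alpha$-martingale whose jump at time $s$ is exactly $|\varphi_{ij}(s)|$ on the event $\{\alpha(s-)=i,\alpha(s)=j\}$. Therefore $X^c\equiv 0$ and the It\^o formula for pure jump semimartingales reduces to
\begin{equation*}
|X(t)|^\beta \;=\; \sum_{s\leq t}\bigl\{|X(s-)+\Delta X(s)|^\beta - |X(s-)|^\beta\bigr\}.
\end{equation*}

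Next, I would insert the elementary convexity estimate, valid for $\beta\geq 2$,
\begin{equation*}
|a+b|^\beta - |a|^\beta - \beta |a|^{\beta-2}a\cdot b \;\leq\; C_\beta\bigl(|a|^{\beta-2}|b|^2 + |b|^\beta\bigr),
\end{equation*}
term by term. The first-order piece $\beta|X(s-)|^{\beta-2}X(s-)\Delta X(s)$ assembles into a local martingale whose compensated version has zero expectation (after a standard localization argument). The residual pathwise sum $\sum_{s\leq t}|\Delta X(s)|^\beta$ has predictable compensator
\begin{equation*}
\int_0^t \sum_{i\neq j} |\varphi_{ij}(s)|^\beta \lambda_{ij}\mathbf{1}_{\{\alpha(s-)=i\}}\,ds,
\end{equation*}
which is exactly the target right-hand side. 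The remaining mixed term of type $|X(s-)|^{\beta-2}|\Delta X(s)|^2$, after compensation, gives
\begin{equation*}
\mathbb{E}\Big[\int_0^t |X(s-)|^{\beta-2}\sum_{i\neq j}|\varphi_{ij}(s)|^2\lambda_{ij}\mathbf{1}_{\{\alpha(s-)=i\}}\,ds\Big],
\end{equation*}
which I would dominate by Young's inequality $ab\leq \varepsilon a^{\beta/(\beta-2)} + C(\varepsilon) b^{\beta/2}$, isolating a piece $\varepsilon\,\mathbb{E}[\sup_{s\leq t}|X(s)|^\beta]$ to be absorbed into the left-hand side after applying Doob's $L^\beta$ inequality.

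For the resulting $b^{\beta/2}$ contribution $\mathbb{E}\bigl[\int_0^t (\sum_{i\neq j}|\varphi_{ij}|^2\lambda_{ij}\mathbf{1}_{\{\alpha(s-)=i\}})^{\beta/2}ds\bigr]$, I would use the crucial observation that at each time $s$ only the row $i=\alpha(s-)$ is active, so at most $I-1$ summands are nonzero. The inequality $(a_1+\cdots+a_{I-1})^{\beta/2}\leq (I-1)^{\beta/2-1}(a_1^{\beta/2}+\cdots+a_{I-1}^{\beta/2})$ together with the boundedness of $\lambda_{ij}$ (allowing $\lambda_{ij}^{\beta/2}\leq C\lambda_{ij}$ with $C$ depending on $\sum_{i\neq j}\lambda_{ij}$) converts $|\varphi_{ij}|^2$ into $|\varphi_{ij}|^\beta$ with the correct weight. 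Combining with Doob's inequality and choosing $\varepsilon$ small enough to absorb the $\sup$-term closes the estimate and yields the stated constant $C=C(T,\beta,\sum_{i\neq j}\lambda_{ij})$.

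The main obstacle is the mixed term $|X(s-)|^{\beta-2}|\Delta X(s)|^2$: it cannot be bounded jump-by-jump in the desired form and must be tied back to $\sup_{s\leq t}|X(s)|^\beta$ through Young's inequality, then absorbed. The other delicate point is the passage from the pathwise sum $\sum_{s\leq t}|\Delta X(s)|^\beta$ to its compensator, which requires a standard localization by stopping times (truncating $X$ so that the martingale pieces become true martingales) followed by monotone/Fatou passage to the limit.
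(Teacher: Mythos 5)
The paper gives no proof of this lemma at all---it simply points to Kunita's argument---so your reconstruction is precisely the intended route, and the overall architecture (second-order Taylor remainder, compensation of the jump sums, Young's inequality to absorb the mixed term, Doob's inequality plus localization, and the conversion of $(\sum_j|\varphi_{ij}|^2\lambda_{ij})^{\beta/2}$ into $\sum_j|\varphi_{ij}|^\beta\lambda_{ij}$) is sound and does close the estimate.

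One step, however, is wrong as written: the opening identity $|X(t)|^\beta=\sum_{s\le t}\{|X(s-)+\Delta X(s)|^\beta-|X(s-)|^\beta\}$. The process $X(s)=\int_0^s|\varphi(r)|\bullet dM(r)$ is built from the \emph{compensated} martingales $dM_{ij}=d[M_{ij}]-\lambda_{ij}\mathbf{1}_{\{\alpha(s-)=i\}}ds$, so its paths carry a continuous finite-variation part and are not pure-jump; ``purely discontinuous martingale'' means orthogonal to continuous martingales, not that $X(t)=\sum_{s\le t}\Delta X(s)$. The correct It\^o formula retains the first-order integral $\beta\int_0^t|X(s-)|^{\beta-2}X(s-)\,dX(s)$ (a local martingale, killed in expectation after localization) and the jump sum consists only of the second-order remainders $|X(s-)+\Delta X(s)|^\beta-|X(s-)|^\beta-\beta|X(s-)|^{\beta-2}X(s-)\Delta X(s)$. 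Your error happens to be self-cancelling: the drift integral you omitted is exactly minus the compensator of the first-order jump sum you then ``compensate away,'' so the final inequality is unaffected---but the two displayed formulas in your argument are not consistent with each other and the step should be rewritten from the correct It\^o formula. A minor further point: the constant $(I-1)^{\beta/2-1}$ depends on the number of states $I$ rather than on $\sum_{i\neq j}\lambda_{ij}$ as the lemma claims; the weighted H\"older bound $\sum_j|\varphi_{ij}|^2\lambda_{ij}\le(\sum_j|\varphi_{ij}|^\beta\lambda_{ij})^{2/\beta}(\sum_j\lambda_{ij})^{1-2/\beta}$ yields the stated dependence directly.
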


              Consider the following SDE with regime switching:
               \begin{align}\label{6.111}
              \begin{aligned}
              X(t)= \ & x+\int_0^tb(s,X(s),\a(s-))ds+\int_0^t\si(s,X(s),\a(s-))dW(s)\\
                    & +\int_0^t\b(s,X(s-),\a(s-))\bullet d M(s),\ t\in[0,T].
              \end{aligned}
              \end{align}
%
             For each pair $(i_0,j_0)\in \cI \ts \cI$, let $b:[0,T]\ts \Omega\ts \dbR^n \ts \cI\ra\dbR^n,$ $\si:[0,T]  \ts \Omega \ts \dbR^{n} \ts \cI\ra \dbR^{n\ts d},$ $\b_{i_0j_0}:[0,T]\ts \Omega\ts \dbR^n \ts \cI\ra\dbR^n$ satisfy the following assumption.

            \begin{assumption}\label{assumption1} \rm
            	\begin{itemize}
            		\item [(i)] There exists a positive constant  $L$ such that, for $t\in[0,T]$, $x,x'\in \dbR^n$, $i\in \cI$,
            		\begin{align}\nonumber
            			|b(t,x,i)-b(t,x',i)|+|\sigma(t,x,i)-\sigma(t,x',i)|+|\beta_{i_0j_0}(t,x,i)-\beta_{i_0j_0}(t,x',i)|\leq L|x-x'|.
            		\end{align}
            		\item[(ii)]  For   $i\in \cI$, $b(\cd,0,i)\in \cH_{\dbF^{W,\a}}^{1,\b}(0,T;\dbR^n), $
                         $\beta(\cd,0,i)=(\beta_{i_0j_0}(\cd,0,i))_{i_0,j_0\in \cI} \in\cK_{\dbF^{W,\a}}^{\b,1}(0,T;\dbR^{n}),$ $\si(\cd,0,i)
            		\in\cH_{\dbF^{W,\a}}^{2,\frac{\b}{2}}(0,T;\dbR^{n\ts d}).$
            	\end{itemize}
             \end{assumption}
            With \autoref{le 6.0} in hand, following the standard argument for classical SDEs and BSDEs,
            we have the well-posedness of FBSDEs with regime switching.
\begin{theorem}\label{th 3.2}\rm
              Under \autoref{assumption1},  SDE with regime switching (\ref{6.111}) possesses a unique solution $X\in \cS_{\dbF^{W,\a}}^{\b}(0,T;\dbR^n).$    Furthermore, there exists a constant $C>0$
              depending on $L,T,\b,\sum\limits_{i,j=1,i\neq j}^I\l_{ij}$ such that
               \begin{align}\label{6.112}
               \begin{aligned}
                  &  \dbE\[\sup_{0\leq t\leq T}|X(t)|^\b \]\leq C\dbE\bigg[|x|^\b+\(\int_0^T|b(t,0,\a(t-))|dt\)^\b+\(\int_0^T|
                         \si(t,0,\a(t-))|^2dt\)^\frac{\b}{2}\\
                  & \qq\qq\qq\qq\q
                  +\(\int_0^T\sum_{i,j=1,i\neq j}^I|\b_{ij}(t,0,\a(t-))|^\b\l_{ij}\mathbf{1}_{\{\a(t-)=i\}}dt \)
                  \bigg].
                 \end{aligned}
               \end{align}
\end{theorem}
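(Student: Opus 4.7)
My plan is to establish both existence/uniqueness and the a priori estimate \eqref{6.112} by combining a Picard iteration on $\cS^\beta_{\dbF^{W,\alpha}}(0,T;\dbR^n)$ with the three key moment inequalities: Hölder for the drift, Burkholder--Davis--Gundy for the Brownian stochastic integral, and \autoref{le 6.0} for the martingale term $\int_0^\cdot \beta(s,X(s-),\alpha(s-))\bullet dM(s)$.

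First, I would set $X^{0}(t)\equiv x$ and define iteratively
\begin{equation*}
X^{n+1}(t)=x+\int_0^t b(s,X^{n}(s),\alpha(s-))ds+\int_0^t\sigma(s,X^{n}(s),\alpha(s-))dW(s)+\int_0^t\beta(s,X^{n}(s-),\alpha(s-))\bullet dM(s).
\end{equation*}
Applying Hölder to the Lebesgue integral with exponent $\beta$, BDG to the $dW$-integral, and \autoref{le 6.0} to the $\bullet dM$-integral, together with \autoref{assumption1}(ii), one shows inductively that each $X^{n}\in\cS^{\beta}_{\dbF^{W,\alpha}}(0,T;\dbR^n)$. The non-trivial estimate is the one for the jump term, where \autoref{le 6.0} yields a bound of the form $C\,\dbE\int_0^t\sum_{i\ne j}|\beta_{ij}(s,X^{n}(s-),\alpha(s-))|^\beta\lambda_{ij}\mathbf{1}_{\{\alpha(s-)=i\}}ds$, which by Lipschitz continuity and the finiteness of $\sum_{i\ne j}\lambda_{ij}$ is controlled by $C\int_0^t\dbE[\sup_{r\le s}|X^{n}(r)|^\beta]\,ds$ plus a term involving $\beta(\cdot,0,\cdot)$.

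Second, I would apply the same three inequalities to the difference $X^{n+1}-X^{n}$; using only the Lipschitz parts of $b,\sigma,\beta$ this yields
\begin{equation*}
\dbE\Big[\sup_{0\le s\le t}|X^{n+1}(s)-X^{n}(s)|^\beta\Big]\le C\int_0^t\dbE\Big[\sup_{0\le r\le s}|X^{n}(r)-X^{n-1}(r)|^\beta\Big]\,ds,
\end{equation*}
from which an iteration/Picard argument produces a Cauchy sequence in $\cS^{\beta}_{\dbF^{W,\alpha}}(0,T;\dbR^n)$ whose limit $X$ solves \eqref{6.111}. Uniqueness is obtained by applying the identical estimate to the difference of two solutions and invoking Gronwall.

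Third, the a priori estimate \eqref{6.112} follows by applying the three moment inequalities directly to the integral form of \eqref{6.111}, splitting each coefficient as $b(s,X,\alpha)=[b(s,X,\alpha)-b(s,0,\alpha)]+b(s,0,\alpha)$ (and analogously for $\sigma,\beta$), bounding the Lipschitz pieces by $C\int_0^t\dbE[\sup_{r\le s}|X(r)|^\beta]ds$, and then applying Gronwall's lemma. The main obstacle I anticipate is purely bookkeeping in the jump term: one must swap summation and integration to apply \autoref{le 6.0}, and verify that the Lipschitz bound $|\beta_{ij}(s,x,i)-\beta_{ij}(s,x',i)|^\beta\le L^\beta|x-x'|^\beta$ combined with the weight $\lambda_{ij}\mathbf{1}_{\{\alpha(s-)=i\}}$ yields a Gronwall-compatible control, which it does because $\sum_{i,j\in\cI,\,i\ne j}\lambda_{ij}<\infty$ on the finite state space $\cI$.
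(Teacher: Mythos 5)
Your proposal is correct and takes essentially the same route as the paper: the authors explicitly state that Theorem \ref{th 3.2} follows from ``the standard argument for classical SDEs'' once \autoref{le 6.0} supplies the $L^\beta$ moment bound for the $\bullet\, dM$ integral, and your Picard iteration with H\"older, Burkholder--Davis--Gundy, \autoref{le 6.0}, and Gronwall is precisely that standard argument written out. The only thing to note is that you have in effect supplied the details the paper leaves implicit, and your handling of the jump term (finiteness of $\sum_{i\neq j}\lambda_{ij}$ making the bound Gronwall-compatible) is exactly the point \autoref{le 6.0} is designed to deliver.
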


             Next, let us focus on the following  BSDE:
              \begin{equation}\label{6.1-1111}
                \begin{aligned}
                 Y(t)&=\xi+\int_t^TF(s,Y(s),Z(s),\sum_{i,j=1,i\neq j}^IK_{ij}(s)\l_{ij}\mathbf{1}_{\{\a(s-)=i\}},\a(s-))ds\\
                     &\q -\int_t^TZ(s)dW(s)-\int_t^TK(s)\bullet dM(s), \ t\in[0,T].
                \end{aligned}
             \end{equation}
            Similar to  (\ref{6.111}), the above equation   is called  a  \emph{BSDE with regime switching}.
           Assume $F: \Omega\ts [0,T] \ts \dbR^m  \ts   \dbR^{m\ts d}  \ts \dbR^m   \ts \cI\rightarrow  \dbR^m$ satisfies

             \begin{assumption}\label{assumption2} \rm
             		There exists some constant $L>0$ such that, for $t\in[0,T]$,   $y,y',k,k'\in \dbR^m$, $z,z'\in \dbR^{m\ts d}$, $i\in \cI$,
             		$$ |F(t,y,z,k,i)-F(t,y',z',k',i)|\leq L(|y-y'|+|z-z'|+|k-k'|)\ \  \hb{and}\ \
             		\dbE\[\int_0^T |F(t,0,0,0,i)|^2dt\]<\infty.$$
             \end{assumption}

\begin{theorem}\label{th 6.1}\rm
           Under \autoref{assumption2},  for $\xi\in L^2(\sF^{W,\a}_T; \dbR^m)$, BSDE (\ref{6.1-1111}) admits a unique solution $(Y(\cd),Z(\cd),K(\cd))\in \cS^2_{\dbF^{W,\a}}(0,T;\dbR^m) \ts \cH^{2,1}_{\dbF^{W,\a}}(0,T;\dbR^{m\ts d})
                  \ts {\cK}_{\dbF^{W,\a}}^{2,1}(0,T;\dbR^m).$ Moreover, there exists a constant $C>0$ depending on $L,T,\sum\limits_{i,j=1,i\neq j}^I\l_{ij}$  such that
           \begin{equation}\label{6.4-1}
              \begin{aligned}
              &\dbE\bigg[\sup_{t\in[0,T]}|Y(t)|^2+\int_0^T(|Z(t)|^2+\sum_{i,j=1,i\neq j}^I|K_{ij}(t)|^2\l_{ij}\mathbf{1}_{\{\a(t-)=i\}})dt\bigg] \\
              &\leq  C\dbE\bigg[|\xi|^2+\int_0^T |F(t,0,0,0,\a(t-))|^2dt \bigg].
              \end{aligned}
           \end{equation}
\end{theorem}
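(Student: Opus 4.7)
The plan is to follow the classical Pardoux–Peng scheme adapted to the regime-switching setting, using a Picard iteration on the Banach space
\[
\mathcal{B}=\cS_{\dbF^{W,\a}}^{2}(0,T;\dbR^{m})\times\cH_{\dbF^{W,\a}}^{2,1}(0,T;\dbR^{m\times d})\times\cK_{\dbF^{W,\a}}^{2,1}(0,T;\dbR^{m})
\]
equipped with the exponentially weighted norm
\[
\|(Y,Z,K)\|_{\mu}^{2}=\dbE\Big[\int_{0}^{T}e^{\mu t}\Big(|Y(t)|^{2}+|Z(t)|^{2}+\sum_{i\neq j}|K_{ij}(t)|^{2}\l_{ij}\mathbf{1}_{\{\a(t-)=i\}}\Big)dt\Big]
\]
for $\mu>0$ to be chosen large.

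The first ingredient I would need is the martingale representation theorem for the filtration $\dbF^{W,\a}$: every square-integrable $\dbF^{W,\a}$-martingale $N$ admits a representation $N(t)=N(0)+\int_{0}^{t}Z(s)dW(s)+\int_{0}^{t}K(s)\bullet dM(s)$ for some $(Z,K)\in\cH_{\dbF^{W,\a}}^{2,1}\times\cK_{\dbF^{W,\a}}^{2,1}$; this is a standard consequence of the independence of $W$ and $\a$ and the orthogonality $[M_{ij},W]=0=[M_{ij},M_{mn}]$ for $(i,j)\neq(m,n)$ stated in Section 2. Using this, given $(y,z,k)\in\mathcal{B}$, I would define $(Y,Z,K)=\Phi(y,z,k)$ by applying the representation to the martingale
\[
N(t)=\dbE\Big[\xi+\int_{0}^{T}F\Big(s,y(s),z(s),\sum_{i\neq j}k_{ij}(s)\l_{ij}\mathbf{1}_{\{\a(s-)=i\}},\a(s-)\Big)ds\,\Big|\,\sF^{W,\a}_{t}\Big],
\]
and then setting $Y(t)=N(t)-\int_{0}^{t}F(\ldots)ds$. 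The integrability of the driver along $(y,z,k)\in\mathcal{B}$ is guaranteed by Assumption~\ref{assumption2}, and Lemma~\ref{le 6.0} together with the $L^{2}$ assumption on $\xi$ ensures $\Phi$ takes values in $\mathcal{B}$.

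Next I would show $\Phi$ is a contraction for $\mu$ large. For two inputs $(y,z,k)$ and $(y',z',k')$ with images $(Y,Z,K)$ and $(Y',Z',K')$, apply It\^{o}'s formula to $e^{\mu t}|Y(t)-Y'(t)|^{2}$ between $t$ and $T$. The quadratic variation of the Markov-chain stochastic integral contributes $\sum_{i\neq j}|K_{ij}-K'_{ij}|^{2}\l_{ij}\mathbf{1}_{\{\a(s-)=i\}}ds$, which combines with the $|Z-Z'|^{2}$ and $\mu|Y-Y'|^{2}$ terms to absorb the driver contributions. The key observation for treating the $K$-argument of $F$ is that, when $\a(s-)=i_{0}$, Cauchy–Schwarz yields
\[
\Big|\sum_{i\neq j}(K_{ij}-K'_{ij})\l_{ij}\mathbf{1}_{\{\a(s-)=i\}}\Big|^{2}\leq\Big(\sum_{j\neq i_{0}}\l_{i_{0}j}\Big)\sum_{j\neq i_{0}}|K_{i_{0}j}-K'_{i_{0}j}|^{2}\l_{i_{0}j},
\]
so the Lipschitz constant of $F$ in its $k$ variable is absorbed against the $\cK$-norm with a constant depending only on $\sum_{i\neq j}\l_{ij}$. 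Combining with $2ab\leq\e a^{2}+\e^{-1}b^{2}$ and choosing $\mu$ sufficiently large (depending on $L$ and $\sum_{i\neq j}\l_{ij}$) makes $\Phi$ strictly contractive, yielding existence and uniqueness of the fixed point, which is the desired solution.

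Finally, for the a priori estimate \eqref{6.4-1}, I would apply It\^{o}'s formula to $|Y(t)|^{2}$ on $[t,T]$, use the Lipschitz property of $F$ together with $2ab\leq\e a^{2}+\e^{-1}b^{2}$, Gronwall's inequality, and the Burkholder–Davis–Gundy inequality for the Brownian stochastic integral and Lemma~\ref{le 6.0} for the Markov-chain martingale part to remove the $\sup$ from the left-hand side. The main technical obstacle is the proper bookkeeping of the jump terms through It\^{o}'s formula and the observation above about how the weighted sum inside $F$ interacts with the $\cK$-norm; once this is set up correctly, the rest mimics the classical proof of Pardoux–Peng.
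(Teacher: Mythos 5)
Your proposal is correct and coincides with what the paper does: the paper gives no detailed proof of this theorem, merely invoking ``the standard argument for classical SDEs and BSDEs'' with \autoref{le 6.0} in hand (and, in the subsequent remark, an alternative reduction to Papapantoleon et al.), and your contraction-mapping scheme --- martingale representation for $\dbF^{W,\a}$, the weighted norm, the Cauchy--Schwarz absorption of the $k$-argument against the $\cK$-norm, and BDG plus \autoref{le 6.0} for the a priori bound --- is precisely that standard argument written out. No gaps.
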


{
\begin{remark}\rm
  One can also establish the existence and uniqueness of BSDE (2.3) by Papapantoleon et al. \cite[Theorem 3.5]{Papapantoleon-2018}.
Indeed,  Papapantoleon et al.  \cite[Eq. (3.1)]{Papapantoleon-2018} reduces to the above Eq. (2.3)  if we set   $C(t)=t, \ X^o(t)=W(t)$, and $\int_{\dbR^n}K(t,e)\widetilde{\mu}^\natural(dt,de)=K(t)\bullet dM(t)$ with $N(t)\equiv0$.
Furthermore, since  the Lipschitz constant of $F$ with respect to $(y,z,k)$ does not depend on $(t,\o)$,
we can take $\Phi\equiv0$    according to Papapantoleon et al. \cite[(3.3) and (3.4)]{Papapantoleon-2018}.
Consequently, the function $M^\Phi_{\star}(\beta)$ defined in  Papapantoleon et al. \cite[Lemma 3.4]{Papapantoleon-2018}  yields the following estimate:
\begin{align*}
	M^0_{\star}(\beta)\leq \Pi^0_{\star}(\frac{\beta}{2},\b)=\frac{61}{\beta}.
\end{align*}
By choosing  $\widehat{\beta}=200$, it is evident that $M^0_{\star}(\widehat{\beta})<\frac{1}{2}.$
Hence, according to  Papapantoleon et al.  \cite[Theorem 3.5]{Papapantoleon-2018},
the desired result follows.
\end{remark}
}

\section{Equivalent Problem of {\bf Problem (PRUC)} }\label{Sec3}

Since the random noise $\overline{W}^v_\th(\cd)$ depends on the control $v(\cd)$, a coupled circle arises in handling  {\bf Problem (PRUC)}, i.e., the determination of the control $v(\cd)$ will depend on the observation $\overline{W}^v_\th(\cd)$, and  $\overline{W}^v_\th(\cd)$ also depends on the control $v(\cd)$, which brings an immediate difficulty. In order to solve this difficulty, let us look at the observation process $G(\cd)$ (see (\ref{4.2})) carefully. The main characteristics of $G(\cd)$ are that, on the one hand, it is independent of the control $v(\cd)$ and the parameter $\th$, and on the other hand, it is a Brownian motion under the probability $\dbP$. It needs to be pointed out that the probability $\dbP$ is independent of the control $v(\cd)$ and the parameter $\th$. This implies that we can consider an equivalent problem of {\bf Problem (PRUC)} under the probability $\dbP$.

               Subsequent analysis begins with an introduction to the following assumption. For each pair $(i_0,j_0)\in\cI\ts\cI$ and $\th\in \Th$,  let the maps
              \begin{align}\nonumber
                 \begin{aligned}
                 &   b_\th:[0,T] \ts \dbR \ts V \ts \cI  \ra \dbR,\q
                     (\si_\th,\bar{\si}_\th):[0,T]\ts \dbR \ts V  \ts \cI  \ra    \dbR,\\
                 &   \b_{\th,i_0j_0}: [0,T] \ts \dbR \ts V \ts \cI  \ra  \dbR, \q \F_\th: \dbR \ts \cI \ra\dbR,\\
                 &   f_\th:[0,T]\ts \dbR \ts \dbR\ts \dbR\ts \dbR \ts \dbR \ts V  \ts \cI \ra \dbR, \q
                     h_\th: [0,T]\ts \dbR \ts \cI \ra \dbR
                 \end{aligned}
              \end{align}
              satisfy
 \begin{assumption}\label{assumption3} \rm
 	\begin{itemize}
 		\item [(i)]
                 There exists a  positive constant $L$ such that, for $t\in[0,T]$, $x,x'\in \dbR$, $y,y',k,k'\in \dbR,
                         z,z',\bar{z},\bar{z}' \in \dbR,\ v,v'\in V,\ i\in \cI,\ \psi_\th=b_\th,\si_\th,$
                \begin{align}\nonumber
                   \begin{aligned}
                    &   |\psi_\th(t,x,v,i)-\psi_\th(t,x',v',i)|+|\bar{\si}_\th(t,x,v,i)-\bar{\si}_\th(t,x',v',i)|\leq L(|x-x'|+|v-v'|),\\
                    &   |\Phi_\th(x,i)-\Phi_\th(x',i)|+|f_\th(t,x,y,z,\bar{z},k,v,i)-f_\th(t,x',y',z',\bar{z}',k',v',i)|\\
                       \leq& L\(  (1+|x|+|x'|+|v|+|v'|)  (|x-x'|+|v-v'|)+|y-y'|+|z-z'|+|\bar{z}-\bar{z}'|+|k-k'|  \),\\
                    &   |\psi_\th(t,0,0,i)|+|f_\th(t,0,0,0,0,0,i)|+|\Phi_\th(0,i)|\leq L,\\
                    &\  |h_\th(t,x,i)-h_\th(t,x',i)|\leq L|x-x'|, \q |h_\th(t,x,i)|+|\bar{\si}_\th(t,x,v,i)|\leq L.
                   \end{aligned}
                \end{align}
             \item [(ii)]  There exists a positive constant $L$ such that, for $t\in[0,T],\ x,x'\in \dbR,\ v,v'\in V,\ i \in \cI$,
                \begin{align}\nonumber
                  \begin{aligned}
                    |\b_{\th,i_0j_0}(t,x,v,i)-\b_{\th,i_0j_0}(t,x',v',i)|\leq L(|x-x'|+|v-v'|)\q\ \hbox{and}\q\
                    |\b_{\th,i_0j_0}(t,0,0,i)|\leq L.
                  \end{aligned}
                \end{align}
            \item[(iii)]  The terms $b_\th,\si_\th,\bar{\si}_\th, \b_{\th,i_0j_0}, f_\th,\Phi_\th,h_\th$ are continuously differentiable in
                        $(x,y,z,\bar{z},k)$, and there exists a positive constant $L$ such that for $t\in[0,T]$, $x,x'\in \dbR,$
                        $y,y',k,k'\in \dbR,\ z,z',\bar{z},\bar{z}' \in \dbR,\ v,v'\in V,\ i\in \cI$,
                \begin{align}\nonumber
                  \begin{aligned}
                  &  |\phi_\th(t,x,v,i)-\phi_\th(t,x',v',i)| \leq L(|x-x'|+|v-v'|),\\
                  &  |\varphi_\th (t,x,y,z,\bar{z},k,v,i)-\varphi_\th(t,x',y',z',\bar{z}',k',v',i)|\\
                     \leq & L(|x-x'|+|y-y'|+|z-z'|+|\bar{z}-\bar{z}'|+|k-k'|+|v-v'|),
                  \end{aligned}
                \end{align}
                where $\th \in \Th, i\in \cI$,  $\phi_\th$ and $\varphi_\th$ denote the derivatives of $b_\th, \si_\th, \bar{\si}_\th, \b_{\th,i_0j_0},\Phi_\th,h_\th$ and $f_\th$ with respect to $(x,v)$ and  $(x,y,z,\bar{z},$ $k,v)$, respectively.
              \item [(iv)]   There exists a positive constant $L$ such that, for $t\in[0,T]$, $x\in \dbR,\ y,k\in \dbR,\ z,\bar{z}\in \dbR,$
                       $v\in V,\ i\in \cI,\ \th,\th'\in \Th$,
                \begin{align*}
                  & |\psi_\th(t,x,y,z,\bar{z},k,v,i)- \psi_{\th'}(t,x,y,z,\bar{z},k,v,i)|\leq L d(\th, \th'),\\
                  &{ |\Phi_\th(x,i)- \Phi_{\th'}(x,i)|+|\pa_x\Phi_\th(x,i)- \pa_x\Phi_{\th'}(x,i)|\leq L d(\th, \th'),}
                \end{align*}
                where $\psi_\th$ denotes $b_\th, \si_\th, \bar{\si}_\th, \b_{\th,i_0j_0},f_\th,h_\th$ and their derivatives in $(x,y,z,\bar{z},k,v)$.
              \item [(v)]   $\cQ$ is a weakly compact and convex set of probability measures on $(\Th, \sB(\Th))$.
 	\end{itemize}
\end{assumption}

               Now, let us retrospect Eq. (\ref{4.1}) and  Eq. (\ref{4.2}). Notice that
               the forward equation in (\ref{4.1}) involves the random noise $\overline{W}^v_\th(\cd)$,
                to undo the effect caused by   $\overline{W}^v_\th(\cd)$, we obtain by inserting (\ref{4.2}) into (\ref{4.1})
                \begin{align} \label{4.3}
                  \left  \{\begin{aligned}
                dX^v_\th(t) & =[b_\th(t,X^v_\th(t),v(t),\a(t-))-\bar{\si}_\th(t,X^v_\th(t),v(t),\a(t-))h_\th(t,X^v_\th(t),\a(t-))]dt\\
                            &\q +\si_\th(t,X^v_\th(t),v(t),\a(t-))dW(t)+\bar{\si}_\th(t,X^v_\th(t),v(t),\a(t-)) dG(t) \\
                            &\q +\b_\th(t,X^v_\th(t-),v(t),\a(t-))\bullet dM(t),\q t\in[0,T],\\
                dY^v_\th(t) &=  -f_\th(t,X^v_\th(t),Y^v_\th(t), Z^v_\th(t),\bar{Z}^v_\th(t),\sum_{i,j=1,i\neq j}^IK^v_{\th,ij}(t)\l_{ij}
                                 \mathbf{1}_{\{\a(t-)=i\}},v(t),\a(t-))dt\\
                            &\q  +Z^v_\th(t)dW(t)+\bar{Z}^v_\th(t)dG(t)+K^v_\th (t)\bullet dM(t),\ t\in[0,T],\\
                X^v_\th(0)  &=x,\q Y^v_\th(T)=\F_\th(X^v_\th(T), \a(T)).
                  \end{aligned}  \right.
                \end{align}
                  Due to $(W(\cd), G(\cd))$ being a 2-dimensional standard Brownian motion under the probability $\dbP$, one can get
                  the well-posedness of Eq. (\ref{4.3}) under the probability $\dbP$, see \autoref{le 4.1}  below.

\begin{lemma}\label{le 4.1}\rm
                Under $\mathrm{(i)}$ and $\mathrm{(ii)}$ of \autoref{assumption3}, for each $v(\cd)\in \cV_{ad}$, Eq. (\ref{4.3}) possesses a unique solution $(X^v_\th(\cd),Y^v_\th(\cd),Z^v_\th(\cd),\bar{Z}^v_\th(\cd) ,K^v_\th(\cd)) \in \cS_{\dbF}^{4}(0,T;\dbR)\ts \cS^2_{\dbF}(0,T;\dbR) \ts \cH^{2,1}_{\dbF}(0,T;\dbR)\ts \cH^{2,1}_{\dbF}(0,T;\dbR)\ts {\cK}_{\dbF}^{2,1}(0,T;\dbR).$ Furthermore, there exists a constant $C>0$ depending on $L,T,\sum\limits_{i,j=1,i\neq j}^I\l_{ij}$ such that
                \begin{align}\nonumber
                  \begin{aligned}
                  &   \dbE\bigg[\sup_{0\leq t\leq T}|X^v_\th(t)|^4+\sup_{0\leq t\leq T}|Y^v_\th(t)|^{2}
                      + \int_0^T (|Z^v_\th(t)|^2+|\cl{Z}^v_\th(t)|^2)dt\\
                     +&\int_0^T\sum_{i,j=1,i\neq j}^I|K^v_{\th,ij}(t)|^2\l_{ij}\mathbf{1}_{\{\a(t-)=i\}}dt\bigg]
                   \leq  C\bigg(|x|^4+\dbE \[ \int_0^T|v(t)|^4dt \] \bigg).
                  \end{aligned}
                \end{align}
\end{lemma}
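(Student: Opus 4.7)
The FBSDE in (\ref{4.3}) is decoupled (the forward dynamics does not depend on $(Y^v_\th,Z^v_\th,\bar Z^v_\th,K^v_\th)$ and enters the backward dynamics only through $X^v_\th$), so my plan is to solve the forward SDE first, then feed $X^v_\th$ into the BSDE and solve the backward part.

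Step 1: Well-posedness of the forward SDE in $\cS^4_\dbF$. I would view the forward equation as an SDE driven by the two-dimensional Brownian motion $(W,G)$ (independent under $\dbP$) and by the regime-switching martingale $M$, with coefficients
\[
\widetilde b_\th(t,x,v,i)=b_\th(t,x,v,i)-\bar\si_\th(t,x,v,i)h_\th(t,x,i),\ \ \si_\th,\ \bar\si_\th,\ \b_\th.
\]
The product $\bar\si_\th h_\th$ is Lipschitz in $x$ uniformly in $v$ because $\bar\si_\th$ and $h_\th$ are bounded by $L$ and each is Lipschitz in $x$ (Assumption \ref{assumption3}(i)); thus $\widetilde b_\th$ is Lipschitz in $x$ by the triangle inequality. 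Similarly, $\si_\th,\bar\si_\th,\b_\th$ are Lipschitz in $x$ by (i)--(ii). I would then invoke (the two-Brownian, $\dbF$-adapted extension of) \autoref{th 3.2} with $\b=4$; the routine check required is that the ``data at zero'' have the right integrability, which follows from the Lipschitz bounds and $|b_\th(t,0,0,i)|+|\si_\th(t,0,0,i)|+|\b_\th(t,0,0,i)|\leq L$:
\[
|b_\th(t,0,v(t),i)|+|\si_\th(t,0,v(t),i)|+|\b_\th(t,0,v(t),i)|\les L(1+|v(t)|),
\]
together with $\sup_{t\in[0,T]}\dbE[|v(t)|^4]<\infty$ for $v\in\cV_{ad}$ and H\"older's inequality. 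This yields existence, uniqueness, and the $\cS^4_\dbF$ estimate
\[
\dbE\bigl[\sup_{0\leq t\leq T}|X^v_\th(t)|^4\bigr]\leq C\bigl(|x|^4+1+\dbE\!\int_0^T|v(t)|^4dt\bigr).
\]

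Step 2: Well-posedness of the BSDE in $\cS^2_\dbF\ts\cH^{2,1}_\dbF\ts\cH^{2,1}_\dbF\ts\cK^{2,1}_\dbF$. With $X^v_\th$ now fixed, the terminal $\xi_\th:=\F_\th(X^v_\th(T),\a(T))$ belongs to $L^2(\sF_T;\dbR)$ since Assumption \ref{assumption3}(i) gives $|\F_\th(x,i)|\leq L(1+|x|)$ and $X^v_\th(T)\in L^4\subset L^2$. The driver $f_\th$ is Lipschitz in $(y,z,\bar z,k)$ by (i), and the free term
\[
f_\th\bigl(t,X^v_\th(t),0,0,0,0,v(t),\a(t-)\bigr)
\]
satisfies, by setting $x'=v'=0$ in the Lipschitz estimate for $f_\th$ and using $|f_\th(t,0,\dots,0,i)|\leq L$,
\[
|f_\th(t,X^v_\th(t),0,0,0,0,v(t),i)|\leq L+L(1+|X^v_\th(t)|+|v(t)|)(|X^v_\th(t)|+|v(t)|)\leq C(1+|X^v_\th(t)|^2+|v(t)|^2),
\]
so its square is integrable in $dt\otimes d\dbP$ precisely because Step 1 gave $X^v_\th\in\cS^4_\dbF$ and $v\in\cV_{ad}$. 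I would then apply (the $\dbF$-adapted, two-Brownian extension of) \autoref{th 6.1} to obtain the unique quadruple $(Y^v_\th,Z^v_\th,\bar Z^v_\th,K^v_\th)$ in the stated spaces, with the corresponding $L^2$-estimate (\ref{6.4-1}).

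Step 3: Combining the estimates. The final inequality in the statement is obtained by (i) the $\cS^4$ bound for $X^v_\th$ from Step~1, and (ii) inserting this bound into the $L^2$ estimate from Step~2 via $\dbE|\xi_\th|^2\leq C+C\dbE|X^v_\th(T)|^2$ and $\dbE\int_0^T|f_\th(t,X^v_\th(t),0,0,0,0,v(t),\a(t-))|^2dt\leq C+C\dbE\int_0^T(|X^v_\th(t)|^4+|v(t)|^4)dt$. Both are then dominated by the right-hand side of the claimed estimate.

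The main obstacle is subtle rather than technical: the Lipschitz hypothesis on $(\F_\th,f_\th)$ in $(x,v)$ carries the prefactor $(1+|x|+|x'|+|v|+|v'|)$, giving effective \emph{quadratic} growth in $(x,v)$. This forces the terminal and free term to sit in $L^2$ only if $X^v_\th$ has a \emph{fourth} moment and $v$ is $L^4$-bounded. This is exactly why the admissibility condition $\sup_t\dbE|v(t)|^4<\infty$ and the $\cS^4$ space for $X^v_\th$ are built into the statement. Once this is recognized, the two existence-uniqueness theorems already proved (\autoref{th 3.2} and \autoref{th 6.1}) do the rest after a minor adaptation to the filtration $\dbF$ and to the two-dimensional Brownian driver $(W,G)$.
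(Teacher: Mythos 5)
Your proposal is correct and follows essentially the same route as the paper: the paper's proof simply cites \autoref{th 3.2} for the forward equation and \autoref{th 6.1} for the backward equation and then combines the estimates (\ref{6.112}) and (\ref{6.4-1}). Your additional verification of the hypotheses --- in particular the observation that the quadratic growth of $\F_\th$ and $f_\th$ in $(x,v)$ is what forces the $\cS^4_{\dbF}$ space for $X^v_\th$ and the $L^4$ admissibility condition on $v$ --- fills in details the paper leaves implicit but does not change the argument.
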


              \begin{proof}
              According to   \autoref{th 3.2} and \autoref{th 6.1}, for each $v(\cd)\in \cV_{ad}$,  Eq. (\ref{4.3}) possesses a unique solution
              $(X_\th^v(\cd),Y_\th^v(\cd),Z_\th^v(\cd),\bar{Z}_\th^v(\cd),K_\th^v(\cd))\in \cS_{\dbF}^{4}(0,T;\dbR)\ts \cS^2_{\dbF}(0,T;\dbR) \ts \cH^{2,1}_{\dbF}(0,T;\dbR)\ts \cH^{2,1}_{\dbF}(0,T;\dbR)\ts {\cK}_{\dbF}^{2,1}(0,T;\dbR).$ Finally,  the above estimate  directly results from (\ref{6.112}) and (\ref{6.4-1}).
              \end{proof}
               The following lemma states the continuity of $(X^v_\th(\cd), Y^v_\th(\cd), Z^v_\th(\cd),\bar{Z}^v_\th(\cd),K^v_{\th}(\cd))$ with respect to $\th$.

\begin{lemma}\label{le 4.2}\rm
                Under $\mathrm{(i)}$, $\mathrm{(ii)},$ and $\mathrm{(iv)}$ of \autoref{assumption3},
                the maps $\th \mapsto X^v_\th(\cd), Y^v_\th(\cd), Z^v_\th(\cd),
                \bar{Z}^v_\th(\cd),K^v_{\th}(\cd)$ are continuous,
                i.e.,
                \begin{equation}\label{5.5}
                \begin{aligned}
                &  \lim_{\d\ra 0}\sup_{d(\th, \bar{\th})\leq\d }\dbE\bigg[\sup_{t\in[0,T]}
                    \(|X^v_\th(t)-X^v_{\bar{\th}}(t)|^4+|Y^v_\th(t)-Y^v_{\bar{\th}}(t)|^2\)+\int_0^T (|Z^v_\th(t)-Z^v_{\bar{\th}}(t)|^2\\
                & \qq\qq\qq\q+ |\bar{Z}^v_\th(t)-\bar{Z}^v_{\bar{\th}}(t)|^2+\sum_{i,j=1,i\neq j}^I |K^v_{\th,ij}(t)-K^v_{\bar{\th},ij}(t)|^2
                       \l_{ij}\mathbf{1}_{\{\a(t-)=i\}})dt\bigg]=0.
                \end{aligned}
                \end{equation}
\end{lemma}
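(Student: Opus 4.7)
The plan is to establish the pointwise quantitative estimates $\dbE\sup_{t\in[0,T]}|X^v_\th(t)-X^v_{\bar\th}(t)|^4\leq C\,d(\th,\bar\th)^4$ and the analogous $L^2$-bound for the backward tuple with rate $d(\th,\bar\th)^2$, where the constant $C$ depends only on $L,T,x,\sum_{i\neq j}\l_{ij}$ and $\sup_t\dbE|v(t)|^4$, and is therefore independent of $\th$ and $\bar\th$; from such pointwise estimates the uniform limit \rf{5.5} is immediate. Setting $\Delta X:=X^v_\th-X^v_{\bar\th}$, $\Delta Y:=Y^v_\th-Y^v_{\bar\th}$, and analogously $\Delta Z,\Delta\bar Z,\Delta K$, the basic device is the splitting, for every coefficient $\psi\in\{b,\si,\bar\si,\b_{i_0j_0},f,\F,h\}$,
\[
\psi_\th(t,X^v_\th,\cds)-\psi_{\bar\th}(t,X^v_{\bar\th},\cds)=\big[\psi_\th(t,X^v_\th,\cds)-\psi_\th(t,X^v_{\bar\th},\cds)\big]+\big[\psi_\th(t,X^v_{\bar\th},\cds)-\psi_{\bar\th}(t,X^v_{\bar\th},\cds)\big],
\]
where the first bracket is absorbed by the state-Lipschitz parts of \autoref{assumption3}(i)-(ii) and the second by the $\th$-continuity part \autoref{assumption3}(iv), producing a $C\,d(\th,\bar\th)$ residual.

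For the forward SDE for $\Delta X$, the effective drift $[b_\th-\bar\si_\th h_\th](\cd,X^v_\th)-[b_{\bar\th}-\bar\si_{\bar\th}h_{\bar\th}](\cd,X^v_{\bar\th})$ is pointwise dominated by $C(|\Delta X|+d(\th,\bar\th))$ thanks to the splitting above and to the uniform bounds $|\bar\si_\th|,|h_\th|\leq L$; raising to the fourth power, applying BDG to the $dW$- and $dG$-integrals, the Kunita-type inequality of \autoref{le 6.0} to the $\bullet dM$-integral, and Gronwall's lemma to $t\mapsto\dbE\sup_{s\leq t}|\Delta X(s)|^4$ yields $\dbE\sup_t|\Delta X(t)|^4\leq C\,d(\th,\bar\th)^4$. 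For the BSDE, the same splitting on the terminal gives $\dbE|\Delta Y(T)|^2\leq C\,\dbE|\Delta X(T)|^2+C\,d(\th,\bar\th)^2$, while on the generator it yields
\[
|\Delta f(t)|\leq L(1+|X^v_\th|+|X^v_{\bar\th}|+2|v|)|\Delta X|+L(|\Delta Y|+|\Delta Z|+|\Delta\bar Z|+|\Delta K_\Si|)+L\,d(\th,\bar\th),
\]
where $\Delta K_\Si:=\sum_{i\neq j}\Delta K_{ij}\l_{ij}\mathbf{1}_{\{\a(t-)=i\}}$. The non-Lipschitz piece is absorbed by H\"older,
\[
\dbE\!\int_0^T(1+|X^v_\th|+|X^v_{\bar\th}|+2|v|)^2|\Delta X|^2\,dt\leq C\big(\dbE\sup_t|\Delta X|^4\big)^{1/2}\leq C\,d(\th,\bar\th)^2,
\]
using the uniform $L^4$ bounds of \autoref{le 4.1} on $X^v_\th,X^v_{\bar\th}$ and the $L^4$ integrability of $v$; plugging these into the standard $L^2$ a-priori estimate for BSDEs with regime switching (the analogue of \rf{6.4-1} applied to the $\Delta$-equation) delivers the desired BSDE continuity.

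The main obstacle I anticipate is the super-linear factor $(1+|x|+|x'|+|v|+|v'|)$ in the Lipschitz hypothesis on $f_\th$ from \autoref{assumption3}(i). It blocks a one-shot Lipschitz BSDE estimate and is precisely why \autoref{le 4.1} is stated with the forward component in $\cS_\dbF^4$ rather than $\cS_\dbF^2$: two of those integrability powers are consumed by H\"older to dominate the growth factor, while the remaining two deliver the $d(\th,\bar\th)^2$-rate. Everything else is bookkeeping, and since every constant depends on $\th,\bar\th$ only through $d(\th,\bar\th)$, the supremum over $\{\bar\th:d(\th,\bar\th)\leq\d\}$ is controlled automatically and \rf{5.5} follows.
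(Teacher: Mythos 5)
Your proof is correct and follows essentially the same route as the paper's: the same two-term splitting of each coefficient difference into a state-Lipschitz part and a $\th$-continuity part, the forward $L^4$ estimate with rate $d(\th,\bar{\th})^4$ obtained from the SDE a priori bound (BDG, the Kunita-type inequality, Gronwall), and the backward $L^2$ estimate in which the superlinear factor $(1+|x|+|x'|+|v|+|v'|)$ in $\pa_x f_\th$, $\pa_x\Phi_\th$ is absorbed by Cauchy--Schwarz against the uniform $L^4$ bounds of \autoref{le 4.1}. The only cosmetic slip is your intermediate terminal bound $\dbE|\Delta Y(T)|^2\le C\,\dbE|\Delta X(T)|^2+C\,d(\th,\bar{\th})^2$, which should read $\le C\big(\dbE|\Delta X(T)|^4\big)^{1/2}+C\,d(\th,\bar{\th})^2$, since $\Phi_\th$ carries the same quadratic-growth Lipschitz factor that you already treat correctly for the generator.
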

\begin{proof}
  The proof is split into two steps.

             \emph{Step 1 ($X$-estimate).} Denote $\eta(s)=X_\th^v(s)-X_{\bar{\th}}^v(s)$ and, for $l=b,\si,\bar{\si},h,$
             \begin{equation*}
             \begin{aligned}
             A_l(s)&:=\int_0^1  \pa_xb_{\bar{\th}}(s,X^v_{\bar{\th}}(s)+\l(X^v_{\th}(s)-X^v_{\bar{\th}}(s)), v(s),\a(s-))d\l,\\
             B^{\th,\bar{\th}}_l(s)&:=l_\th(s,X^v_{\th}(s), v(s),\a(s-))-l_{\bar{\th}}(s,X^v_{\th}(s), v(s),\a(s-)), \\
             A_{\b,ij}(s)&:=\int_0^1  \pa_x\b_{\bar{\th},ij}(s,X^v_{\bar{\th}}(s)+\l(X^v_{\th}(s)-X^v_{\bar{\th}}(s)), v(s),\a(s-))d\l,\\
             B^{\th,\bar{\th}}_{\b,ij}(s)&:=\b_{\th,ij}(s,X^v_{\th}(s), v(s),\a(s-))-\b_{\bar{\th},ij}(s,X^v_{\th}(s), v(s),\a(s-)).
             \end{aligned}
             \end{equation*}
             Then we have
             \begin{equation*}
             \begin{aligned}
             \eta(t) & =\int_0^t\(\[A_b(s)+h^v_{\bar{\th}}(s)A_{\bar{\si}}(s)+\bar{\si}^v_{\th}(s)A_h(s)\]\eta(s)
                             +\[B^{\th,\bar{\th}}_b(s)+h^v_{\bar{\th}}(s)B^{\th,\bar{\th}}_{\bar{\si}}(s)
                             +\bar{\si}^v_{\th}(s)B^{\th,\bar{\th}}_h(s)\]\)ds\\
                   & \q +\int_0^t \(A_{\si}(s)\eta(s)+B^{\th,\bar{\th}}_{\si}(s)\)dW(s)+\int_0^t \(A_{\bar{\si}}(s)\eta(s)
                      +B^{\th,\bar{\th}}_{\bar{\si}}(s)\)dG(s) \\
                   &  \q + \int_0^t \(\sum_{i,j=1,i\neq j}^IA_{\b,ij}(s) \eta(s)\l_{ij} \mathbf{1}_{\{\a(s-)=i\}}
                          +\sum_{i,j=1,i\neq j}^IB^{\th,\bar{\th}}_{\b,ij}(s) \l_{ij} \mathbf{1}_{\{\a(s-)=i\}}\) dM_{ij}(s).
             \end{aligned}
             \end{equation*}
             It follows from \autoref{th 3.2} that
             \begin{equation*}
             \begin{aligned}
             \dbE\[\sup_{t\in[0,T]}|\eta(t)|^4\]
             &    \leq C\dbE\bigg[\(\int_0^T|B^{\th,\bar{\th}}_b(s)+h^v_{\bar{\th}}(s)B^{\th,\bar{\th}}_{\bar{\si}}(s)
             +\bar{\si}^v_{\th}(s)B^{\th,\bar{\th}}_h(s)|ds\)^4+\(\int_0^T|B^{\th,\bar{\th}}_{\si}(s)|^2ds\)^2\\
             &     \hskip 1.3cm+\(\int_0^T|B^{\th,\bar{\th}}_{\bar{\si}}(s)|^2ds\)^2+\int_0^T\sum_{i,j=1,i\neq j}^I|B^{\th,\bar{\th}}_{\b,ij}(s)|^4\l_{ij} \mathbf{1}_{\{\a(s-)=i\}}ds\bigg].
              \end{aligned}
             \end{equation*}
             Since $|B^{\th,\bar{\th}}_b|+|B^{\th,\bar{\th}}_{\si}|+|B^{\th,\bar{\th}}_{\bar{\si}}|+|B^{\th,\bar{\th}}_{\b,ij}|
             +|B^{\th,\bar{\th}}_h|\leq Ld(\th,\bar{\th})$ and $h^v_{\bar{\th}},\bar{\si}^v_{\th}$ are bounded by $L$, we have
             \begin{equation*}
             \begin{aligned}
             \dbE\[\sup_{t\in[0,T]}|\eta(t)|^4\]\leq Ld(\th,\bar{\th})^4.
              \end{aligned}
             \end{equation*}

             \emph{Step 2 ($Y$-estimate).} Denote $\xi(s):=Y_\th^v(s)-Y_{\bar{\th}}^v(s),\ \g(s):=Z_\th^v(s)-Z_{\bar{\th}}^v(s),\ \bar{\g}(s):=\bar{Z}_\th^v(s)-\bar{Z}_{\bar{\th}}^v(s),\z_{ij}(s):=\b^v_{\th,ij}(s)-\b^v_{\bar{\th},ij}(s)$ and
             denote, for $l=x,y,z,\bar{z},k$,
             \begin{equation*}
             \begin{aligned}
             C_\Phi(T)&=\int_0^1\pa_x\Phi_{\bar{\th}}(X^v_{\bar{\th}}(T)+\l(X_\th^v(T)-X^v_{\bar{\th}}(T)),\alpha(T))d\l,\\
             D_\Phi^{\th,\bar{\th}}(T)&=\Phi_\th(X^v_\th(T),\a(T))-\Phi_{\bar{\th}}(X^v_\th(T),\a(T)),\\
             \Pi^v_\th(s)&=(X^v_\th(s),Y^v_\th(s),Z^v_\th(s),\bar{Z}^v_\th(s),\sum_{i,j=1,i\neq j}^IK^v_{\th,ij}(s)
             \l_{ij} \mathbf{1}_{\{\a(s-)=i\}}   ),\\
             C_l(s)&=\int_0^1\pa_lf_{\bar{\th}}(s,\Pi^v_{\bar{\th}}(s)+\l(\Pi^v_{\th}(s)-\Pi^v_{\bar{\th}}(s)),v(s),\a(s-))ds,\\
             D_f^{\th,\bar{\th}}(s)&=f_\th(s,\Pi^v_\th(s), v(s),\a(s-))-f_{\bar{\th}}(s,\Pi^v_\th(s), v(s),\a(s-)).
              \end{aligned}
             \end{equation*}
             Consequently,  we have
             \begin{equation*}
             \begin{aligned}
             \xi(t) &  =C_\Phi(T)\eta(T)+D_\Phi^{\th,\bar{\th}}(T)+\int_t^T (C_x(s)\eta(s)+C_y(s)\xi(s)+C_z(s)\g(s)+C_{\bar{z}}(s)\bar{\g}(s)\\
                   &  \q +C_k(s)\sum_{i,j=1,i\neq j}^I\z_{ij}(s)\l_{ij} \mathbf{1}_{\{\a(s-)=i\}})ds-\int_t^T\g(s)dW(s)-\int_t^T\bar{\g}(s)dG(s)
                     -\int_t^T \z(s)\bullet dM(s).
             \end{aligned}
             \end{equation*}
             Thanks to \autoref{th 6.1}, one obtains
             \begin{equation*}
             \begin{aligned}
              &\dbE\bigg[\sup_{t\in[0,T]}|\xi(t)|^2+\int_0^T(|\g(t)|^2+|\bar{\g}(t)|^2
                  +\sum_{i,j=1,i\neq j}^I|\z_{ij}(t)|^2\l_{ij}\mathbf{1}_{\{\a(t-)=i\}})dt\bigg] \\
               \leq & C\dbE\bigg[|C_\Phi(T)\eta(T)+D_\Phi^{\th,\bar{\th}}(T)|^2+\int_0^T |C_x(t)\eta(t)|^2dt \bigg]\leq Ld(\th,\bar{\th})^2.
              \end{aligned}
             \end{equation*}
             This completes the proof.
\end{proof}

  Next, we make some  analysis on the process $R_\th^v(\cd)$ (see (\ref{4.7-1})) and on its inverse process  $(R_\th^v(\cd))^{-1}$,
  which are used in Bayes' formula (see, for example, (\ref{4.10}), (\ref{4.25-1})).
From It\^{o}'s formula  and the boundness of $h_\theta(\cd)$,
   one knows that
$R^v_\th(\cd)$ is an $(\dbF, \dbP)$-martingale  and

             \begin{align}\label{4.7-2}
             \begin{aligned}
               \sup_{\th\in\Th}\dbE\[\sup_{0\leq t \leq T}|R_\th^v(t)|^l\]<\infty,\ \forall\  l>1.
             \end{aligned}
             \end{align}
             Note that,  from (\ref{4.7-1}),  the inverse process $(R^v_\theta(\cdot))^{-1}$ satisfies
             \begin{equation*}
             \left\{
               \begin{aligned}
                 d(R^v_\theta(t))^{-1} & =(R^v_\theta(t))^{-1}h^2_\theta(t,X^v_\theta(t), \alpha(t-))dt
                                         +  (R^v_\theta(t))^{-1}h_\theta(t,X^v_\theta(t), \alpha(t-))dG(t),\  t\in[0,T],\\
                        (R^v_\theta(0))^{-1} & =1.
               \end{aligned}
               \right.
             \end{equation*}
             From the boundness of $h_\th(\cd)$, it follows
             \begin{equation*}\label{5.8}
             \sup_{\th\in\Th}\dbE\[\sup_{0\leq t \leq T}|(R_\th^v(t))^{-1}|^l\]<\infty,\ \forall\  l>1.
             \end{equation*}

           Similar to \autoref{le 4.2},  one can get  the continuity of the process $R^v_\th(\cd)$ with respect to $\th$.

   \begin{lemma}\label{le 5.3-0}\rm
                Under  $\mathrm{(i)}$, $\mathrm{(ii)}$ and $\mathrm{(iv)}$ of \autoref{assumption3}, it follows,  for $v(\cd)\in \cV_{ad}$,
                \begin{equation*}
                \begin{aligned}
                &  \lim_{\d\ra 0}\sup_{d(\th, \bar{\th})\leq\d }\dbE\bigg[\sup_{t\in[0,T]}|R^v_\th(t)-R^v_{\bar{\th}}(t)|^2\bigg]=0.
                \end{aligned}
                \end{equation*}
\end{lemma}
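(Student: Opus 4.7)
The plan is to set $\Delta R(t):=R^v_\th(t)-R^v_{\bar\th}(t)$ and derive the linear SDE that $\Delta R$ satisfies, then run a standard energy estimate (It\^o, Gronwall, BDG), feeding in the $L^4$-continuity of $X^v_\th$ in $\th$ from \autoref{le 4.2} and the uniform moment bound \eqref{4.7-2} on $R^v_\th$. Subtracting the equations \eqref{4.7-1} for $\th$ and for $\bar\th$, one has $\Delta R(0)=0$ and
\begin{equation*}
 d\Delta R(t) = \Delta R(t)\,h_\th(t,X^v_\th(t),\a(t-))\,dG(t)
   + R^v_{\bar\th}(t)\,\Delta h(t)\,dG(t),
\end{equation*}
where $\Delta h(t):=h_\th(t,X^v_\th(t),\a(t-))-h_{\bar\th}(t,X^v_{\bar\th}(t),\a(t-))$. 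I would split $\Delta h$ as
$$ \Delta h(t) = \bigl[h_\th(t,X^v_\th(t),\a(t-))-h_\th(t,X^v_{\bar\th}(t),\a(t-))\bigr]+\bigl[h_\th(t,X^v_{\bar\th}(t),\a(t-))-h_{\bar\th}(t,X^v_{\bar\th}(t),\a(t-))\bigr],$$
which, by items (i) and (iv) of \autoref{assumption3}, gives the pointwise bound $|\Delta h(t)|\le L|X^v_\th(t)-X^v_{\bar\th}(t)|+L\,d(\th,\bar\th)$.

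Next, I apply It\^o's formula to $|\Delta R|^2$. Using the boundedness $|h_\th|\le L$ to absorb the $\Delta R\,h_\th$ term into a Gronwall loop, and Cauchy--Schwarz plus \eqref{4.7-2} together with \autoref{le 4.2} to control the inhomogeneity, I obtain
\begin{equation*}
 \dbE\bigl[R^v_{\bar\th}(s)^2|\Delta h(s)|^2\bigr]
   \le 2L^2\,\dbE\bigl[R^v_{\bar\th}(s)^4\bigr]^{1/2}\dbE\bigl[|X^v_\th(s)-X^v_{\bar\th}(s)|^4\bigr]^{1/2}
   +2L^2\,\dbE\bigl[R^v_{\bar\th}(s)^2\bigr]\,d(\th,\bar\th)^2
   \le C\,d(\th,\bar\th)^2,
\end{equation*}
uniformly in $s\in[0,T]$ and $\th,\bar\th\in\Theta$. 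Taking expectation after It\^o and invoking Gronwall's inequality then yields $\sup_{t\in[0,T]}\dbE[|\Delta R(t)|^2]\le C\,d(\th,\bar\th)^2$ with $C$ independent of $\th,\bar\th$.

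Finally, I would lift the pointwise $L^2$-estimate to the uniform-in-time estimate by applying the Burkholder--Davis--Gundy inequality to the stochastic integral representation of $\Delta R$: since
$$\dbE\Bigl[\sup_{t\in[0,T]}|\Delta R(t)|^2\Bigr]\le C\,\dbE\int_0^T\bigl(|\Delta R(s)|^2 h_\th(s)^2 + R^v_{\bar\th}(s)^2|\Delta h(s)|^2\bigr)ds,$$
the two terms on the right are already $O(d(\th,\bar\th)^2)$ by the previous step and the $\Delta h$-estimate. Sending $d(\th,\bar\th)\le\d\to 0$ gives the required conclusion. The main subtlety, rather than an obstacle, is the need to combine the $L^4$-continuity of $X^v_\th$ from \autoref{le 4.2} with the higher-moment estimate \eqref{4.7-2} via Cauchy--Schwarz; the Markov-chain term does not enter this proof since $\Delta R$ has no jump component.
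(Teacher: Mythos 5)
Your argument is correct and is essentially the proof the paper has in mind: the paper omits the details, stating only that the lemma follows ``similar to \autoref{le 4.2}'', which is precisely your route — write the linear SDE for $\Delta R$, bound the inhomogeneity $R^v_{\bar\th}\Delta h$ via the Lipschitz conditions (i), (iv) together with Cauchy--Schwarz, the uniform $L^4$ bound \eqref{4.7-2}, and the $L^4$-continuity of $X^v_\th$ from \autoref{le 4.2}, then conclude by Gronwall and BDG. Your two-pass organization (pointwise $L^2$ estimate first, then lifting to the supremum) is a harmless variant of applying the a priori estimate of \autoref{th 3.2} directly.
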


     With the process  $R^v_\th(\cd)$ and its inverse process  $(R_\th^v(\cd))^{-1}$ in hand, we can rewrite the cost functional
       (\ref{4.2222})  with the probability $\dbP$.  Before that, let us first introduce the assumption on $(\Psi_\th, \L_\th).$
 Let the maps $\Psi_\th:\dbR\ra \dbR$ and $\L_\th:\dbR\ra \dbR$ satisfy
\begin{assumption}\label{assumption4}\rm
	\begin{itemize}
		\item [(i)]
	         $\Psi_\theta(\cd)$ and $\L_\th(\cd)$ are continuously differentiable with respect to their respect variables.

             \item [(ii)]  There exists a constant $L>0$ such that for $x,x'\in \dbR$,\ $y,y'\in \dbR$,\ $\th\in \Th$,
              \begin{align}\nonumber
               \begin{aligned}
              & |\L_\th(y)-\L_\th(y')|\leq L|y-y'|,     \q |\Psi_\th(0)|+|\L_\th(0)|\leq L,\\
              &  |\Psi_\th(x)-\Psi_\th(x')|\leq L(1+|x|+|x'|)|x-x'|.
               \end{aligned}
              \end{align}
             \item [(iii)]  There exists a constant $L>0$ such that for $x\in \dbR$, $y\in\dbR$, $\th,\th'\in \Th$,
               \begin{align}\nonumber
                |\Psi_\th(x)-\Psi_{\th'}(x)|+| \L_\th(y)-\L_{\th'}(y)|\leq Ld(\th,\th').
              \end{align}
              \item [(iv)] There exists a constant $L>0$ such that for $x,x'\in \dbR$, $y,y'\in \dbR$, $\th\in \Th$,
               \begin{align}\nonumber
               |\pa_x\Psi_\th(x)-\pa_x\Psi_\th(x')|\leq L|x-x'|,\q  |\pa_y\L_\th(y)-\pa_y\L_\th(y')|\leq L|y-y'|.
               \end{align}
            \end{itemize}
\end{assumption}

               Thanks to Bayes' formula, the cost functional  (\ref{4.2222})  can be written as
               \begin{align}\label{4.10}
               J(v(\cd))=\sup_{Q\in \cQ}\int_{\Th}  \dbE[R^v_\th(T)\Psi_\th(X_\th^v(T))+\L_\th(Y^v_\th(0))] Q(d\th),
               \end{align}
               where $R^v_\th(\cd)$  is introduced in (\ref{4.7-1}).
              Note that the expectation $\dbE$ (or, say, the probability $\dbP$) does not depend on the control $v$ and the
              parameter $\th$, which is very crucial for later analysis.
                \textbf{Problem} (\textbf{PRUC})  is equivalent to minimize (\ref{4.10}) over $\mathcal{V}_{ad}$ subject to
                 (\ref{4.3}) and (\ref{4.7-1}).

\section{Maximum Principle and Sufficient Condition}\label{Sec4}
               This section is devoted to  the necessary maximum  principle and sufficient condition.
\subsection{Variational Equations}

               Let $\h v(\cd)$ be an optimal control and for each $\th\in \Th$,  let $(\h X_\th(\cd),\h Y_\th(\cd),\h Z_\th(\cd),\widehat{\bar{Z}}_\th(\cd),\h K_\th(\cd))$  and $\h R_\th(\cd)$  be the solutions  to Eq. (\ref{4.3}) and (\ref{4.7-1}) with $\h v(\cd)$, respectively. From the convexity of $\cV_{ad}$, for each $v(\cd)\in\cV_{ad}$ and $\e>0$, $v^\e(\cd):=\h v(\cd)+\e(v(\cd)-\h v(\cd))\in\cV_{ad}$. By $(X^\e_\th(\cd),Y^\e_\th(\cd),Z^\e_\th(\cd), \bar{Z}^\e_\th(\cd), K^\e_\th(\cd))$ and $R^\e_\th(\cd)$ we denote the solutions to Eqs. (\ref{4.3})  and (\ref{4.7-1}) with $v^\e(\cd)$, for each $\th\in \Th$.
              Denote, for  $\psi_\th=b_\th,\si_\th,\bar{\si}_\th,\b_\th,\Phi_\th, h_\th$, $\ell=x,y,z,\bar{z},k,v$,
              \begin{equation*}\nonumber
               \begin{aligned}
                \psi_\th(t) & :=\psi_\th(t,\h{X}_\th(t),\h v(t),\a(t-)), \qq\  \psi^\e_\th(t):=\psi_\th(t,X^\e_\th(t),v^\e(t),\a(t-)), \\
               \pa_x\psi_\th(t) &  :=\pa_x\psi_\th(t,\h{X}_\th(t),\h{v}(t),\a(t-)),\
                                     \pa_v\psi_\th(t):=\pa_v\psi_\th(t,\h{X}_\th(t),\h v(t),\a(t-)),\\
               f^\e_\th(t) & :=f_\th(t,X^\e_\th(t),Y^\e_\th (t),Z^\e_\th(t),\bar{Z}^\e_\th(t)
                                      \sum_{i,j=1,i\neq j}^IK^\e_{\th,ij}(t)\l_{ij}\mathbf{1}_{\{\a(t-)=i\}},v^\e(t),\a(t-)),\\
              f_\th(t) &  :=f_\th(t,\h{X}_\th(t),\h{Y}_\th(t),\h{Z}_\th(t),\h{\bar{Z}}_\th(t), \sum_{i,j=1,i\neq j}^I\h{K}_{\th,ij}(t)\l_{ij}
                           \mathbf{1}_{\{\a(t-)=i\}},\h{v}(t),\a(t-)),\\
              \pa_\ell f_\th(t) & :=\pa_\ell f_\th(t,\h{X}_\th(t),\h{Y}_\th(t),\h{Z}_\th(t),\h{\bar{Z}}_\th(t),
                                  \sum_{i,j=1,i\neq j}^I\h{K}_{\th,ij}(t)\l_{ij}\mathbf{1}_{\{\a(t-)=i\}}, \h{v}(t),\a(t-)).
%
              \end{aligned}
              \end{equation*}

               Consider the following variational equations:
               \begin{equation}\label{11.1}
               \left\{
               \begin{aligned}
               dR^{1}_\th(t)&=\( R^{1}_\th(t)h_\th(t) +\h{R}_\th(t)\pa_xh_\th(t)X^{1}_\theta(t)\)dG(t),\  t\in[0,T],  \\
               R^{1}_\th(0)&=0,
               \end{aligned}
               \right.
               \end{equation}
              and
              \begin{equation}\label{11.2}
               \left\{   \begin{aligned}
                 dX^{1}_\th(t) & =\bigg\{\[\pa_xb_\th(t)-\pa_x\bar{\si}_\th(t)h_\th(t)-\bar{\si}_\th(t)\pa_xh_\th(t)\]X^{1}_\th(t)\\
                               & \q\q + \[\pa_vb_\th(t) -\pa_v\bar{\si}_\th(t)h_\th(t)\](v(t)-\h{v}(t))\bigg\}dt\\
                               & \q\q  +\[\pa_x\si_\th(t)X^{1}_\th(t)+\pa_v\si_\th(t)(v(t)-\h{v}(t))\]dW(t)\\
                               & \q\q  +\[\pa_x\bar{\si}_\th(t)X^{1}_\th(t)+\pa_v\bar{\si}_\th(t)(v(t)-\h{v}(t))\]dG(t)\\
                               & \q\q  +\[\pa_x\b_\th(t)X^{1}_\th(t)+\pa_v\b_\th(t)(v(t)-\h{v}(t))\]\bullet dM(t),\q t\in[0,T],\\
                   X^{1}_\th(0)&  =0.
              \end{aligned}   \right.
               \end{equation}
              According to \autoref{th 3.2}, under \autoref{assumption3}, the above variational equations admit unique solutions $X^{1}_\th(\cd)\in {\cS}_{\dbF}^{4}(0,T;\dbR)$ and
              $R^{1}_\theta(\cd)\in {\cS}_{\dbF}^{2}(0,T;\dbR)$. Moreover, it follows
              \begin{equation}\label{4.12-1}
              \left\{ \begin{aligned}
               &  \dbE\bigg[\sup_{0\leq t\leq T}|X^{1}_\th(t) |^4\bigg]
                  \leq C\dbE\bigg[\int_0^T(|v(t)|^4+|\h{v}(t)|^4)dt\bigg],\\
               &  \dbE\bigg[\sup_{0\leq t\leq T}|R^{1}_\th (t)|^2 \bigg]
                   \leq C\bigg\{\dbE\[\int_0^T(|v(t)|^4+|\h{v}(t)|^4)dt\]\bigg\}^{\frac{1}{2}}.
              \end{aligned}  \right.
              \end{equation}

    For simplicity presentation, denote
              $$
                \d^{\e}X_\th(t):=\frac{1}{\e}\big(X^\e_\th(t)-\h{X}_\th(t)\big)-X^{1}_\th(t),\q \d^{\e}R_\th(t):=\frac{1}{\e}\big(R^\e_\th(t)-\h{R}_\th(t)\big)-R^{1}_\th(t).
               $$
\begin{lemma}\label{le 5.1}\rm
             Let $\mathrm{(i)}$-$\mathrm{(iii)}$ of \autoref{assumption3} hold true, then we have, for $\th\in \Th$,
          \begin{equation*}
                      \begin{aligned}
               \mathrm{ (i)}&\  \dbE\bigg[\sup\limits_{0\leq t\leq T}|\d^{\e}X_\th(t) |^4 \bigg]
               \leq C\dbE\bigg[\int_0^T(|v(t)|^4+|\h{v}(t)|^4)dt\bigg].\\
               \mathrm{(ii)}&\  \lim\limits_{\e\ra0}\sup\limits_{\th\in\Th}\dbE\bigg[\sup\limits_{0\leq t\leq T}|\d^{\e}X_\th(t)|^4\bigg]=0.\\
              \mathrm{ (iii)}&\  \dbE\bigg[\sup\limits_{0\leq t\leq T}|\d^{\e}R_\th(t)|^2\bigg]
               \leq C\dbE\bigg[\int_0^T(|v(t)|^4 +|\h{v}(t)|^4)dt\bigg]^\frac{1}{2}.\\
               \mathrm{(iv)}&\  \lim\limits_{\e\ra0}\sup\limits_{\th\in\Th} \dbE\bigg[\sup\limits_{0\leq t\leq T}|\d^{\e}R_\th (t) |^2 \bigg]=0.
          \end{aligned}
           \end{equation*}
\end{lemma}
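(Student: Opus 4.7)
The plan is to derive linear SDEs satisfied by $\d^{\e}X_\th$ and $\d^{\e}R_\th$ via integral-form Taylor expansions, apply the $\cS^4$ and $\cS^2$ a priori estimates from \autoref{th 3.2}, and exploit the uniform-in-$\th$ Lipschitz bounds in \autoref{assumption3}(iii)-(iv) to pass to the limit. For each coefficient $\psi\in\{b,\si,\bar\si,\b,h\}$, I denote
$$\bar A^\e_{\psi,\ell}(t):=\int_0^1 \pa_\ell \psi_\th\big(t,\h X_\th(t)+\l(X^\e_\th(t)-\h X_\th(t)),\h v(t)+\l\e(v(t)-\h v(t)),\a(t-)\big)d\l,\quad \ell\in\{x,v\},$$
so that $\psi^\e_\th(t)-\psi_\th(t)=\bar A^\e_{\psi,x}(t)(X^\e_\th(t)-\h X_\th(t))+\e\bar A^\e_{\psi,v}(t)(v(t)-\h v(t))$.

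For (i) and (ii), subtracting (\ref{11.2}) from the equation that $\tfrac{1}{\e}(X^\e_\th-\h X_\th)$ satisfies yields a linear SDE for $\d^\e X_\th$ whose coefficients are $\bar A^\e_{b,x}, \bar A^\e_{\si,x}, \bar A^\e_{\bar\si,x}, \bar A^\e_{\b,x}$ (all bounded by $L$ via \autoref{assumption3}(i)(iii)) and whose forcing terms have the form
$$E^\e_{\psi}(t)=\big(\bar A^\e_{\psi,x}(t)-\pa_x\psi_\th(t)\big)X^1_\th(t)+\big(\bar A^\e_{\psi,v}(t)-\pa_v\psi_\th(t)\big)(v(t)-\h v(t)).$$
Applying (\ref{6.112}) with $\b=4$, together with the $L^\infty$ bounds on the partial derivatives and the bound (\ref{4.12-1}) on $X^1_\th$, gives (i). For (ii), the uniform-in-$\th$ Lipschitz continuity of $\pa_\ell\psi_\th$ in \autoref{assumption3}(iii) implies $|\bar A^\e_{\psi,\ell}(t)-\pa_\ell\psi_\th(t)|\le L(|X^\e_\th(t)-\h X_\th(t)|+\e|v(t)-\h v(t)|)$, and a direct application of (\ref{6.112}) to the SDE solved by $X^\e_\th-\h X_\th$ produces $\dbE[\sup_t|X^\e_\th(t)-\h X_\th(t)|^4]\le C\e^4\dbE\big[\int_0^T(|v(t)|^4+|\h v(t)|^4)dt\big]$ with $C$ independent of $\th$; feeding this back into the $\cS^4$-estimate for $\d^\e X_\th$ yields the desired $\sup_\th$-uniform limit.

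For (iii) and (iv), I use the decomposition $R^\e_\th h^\e_\th-\h R_\th h_\th=(R^\e_\th-\h R_\th)h^\e_\th+\h R_\th(h^\e_\th-h_\th)$ together with the Taylor expansion of $h^\e_\th-h_\th$, and subtract (\ref{11.1}) from $\tfrac{1}{\e}d(R^\e_\th-\h R_\th)$. The result is the linear SDE
$$d\d^\e R_\th(t)=\big[h^\e_\th(t)\d^\e R_\th(t)+\h R_\th(t)\bar A^\e_{h,x}(t)\d^\e X_\th(t)+\mathcal{E}^\e_\th(t)\big]dG(t),\qquad \d^\e R_\th(0)=0,$$
where $\mathcal{E}^\e_\th(t):=R^1_\th(t)(h^\e_\th(t)-h_\th(t))+\h R_\th(t)(\bar A^\e_{h,x}(t)-\pa_x h_\th(t))X^1_\th(t)$. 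Since $h^\e_\th$ is bounded, (\ref{6.112}) with $\b=2$ applies, and H\"older's inequality --- pairing the uniform-in-$\th$ $L^p$-moments of $\h R_\th, R^1_\th$ (from (\ref{4.7-2}) and (\ref{4.12-1})) with the $L^4$-moments of $X^1_\th, \d^\e X_\th, X^\e_\th-\h X_\th$ --- yields (iii); the square-root exponent comes precisely from Cauchy--Schwarz on products such as $\h R_\th\cd X^1_\th$. Part (iv) then follows by combining (ii) with the vanishing of $\bar A^\e_{h,\ell}-\pa_\ell h_\th$ as $\e\to 0$.

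The main obstacle lies in the careful bookkeeping of cross terms in the $R$-equation and, crucially, in verifying that every constant invoked above is uniform in $\th$. This reduces to observing that every Lipschitz constant in \autoref{assumption3} and every constant appearing in \autoref{th 3.2}, (\ref{4.7-2}) and (\ref{4.12-1}) depends only on $L, T$ and $\sum_{i\neq j}\l_{ij}$, never on the individual parameter $\th$, so that all $\e$-dependent remainders converge to zero uniformly over $\Th$.
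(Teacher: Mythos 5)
Your proposal follows essentially the same route as the paper: the same integral-form Taylor expansion, the same linear SDEs for $\d^{\e}X_\th$ and $\d^{\e}R_\th$ with forcing terms of the form $(\bar A^\e_{\psi,x}-\pa_x\psi_\th)X^1_\th+(\bar A^\e_{\psi,v}-\pa_v\psi_\th)(v-\h v)$ (the paper's $A^\e_{1,\th},\dots,A^\e_{5,\th}$), the same application of \autoref{th 3.2} for item $\mathrm{(i)}$, and the same H\"older/Cauchy--Schwarz pairing of the moments of $\h R_\th$, $R^1_\th$, $X^1_\th$, $\d^\e X_\th$ for item $\mathrm{(iii)}$, which is exactly where the exponent $\frac12$ comes from. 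Items $\mathrm{(i)}$ and $\mathrm{(iii)}$ are fine as written.

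The one place where you diverge, and where there is a genuine gap, is item $\mathrm{(ii)}$. You propose to get the uniform limit quantitatively by inserting $\dbE[\sup_t|X^\e_\th(t)-\h X_\th(t)|^4]\le C\e^4\dbE[\int_0^T(|v(t)|^4+|\h v(t)|^4)dt]$ into the bound $|\bar A^\e_{\psi,x}-\pa_x\psi_\th|\le L(|X^\e_\th-\h X_\th|+\e|v-\h v|)$. But the forcing term $E^\e_\psi$ is then a \emph{product} of two factors each controlled only in $L^4$ (namely $|X^\e_\th-\h X_\th|+\e|v-\h v|$ and $|X^1_\th|+|v-\h v|$), so producing an explicit rate for $\dbE\int_0^T|E^\e_\psi(t)|^4dt$ would require eighth moments of $X^1_\th$ and of the controls, which are not available under $\sup_t\dbE[|v(t)|^4]<\infty$; no admissible H\"older split closes this. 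The paper does not seek a rate: it keeps the uniform bound $|\bar A^\e_{\psi,\ell}-\pa_\ell\psi_\th|\le 2L$, notes that this difference tends to $0$ pointwise as $\e\ra0$, and concludes $\dbE\int_0^T|A^\e_{i,\th}(t)|^4dt\ra0$ by dominated convergence against the fixed integrable dominant $C(|X^1_\th|^4+|v|^4+|\h v|^4)$. You should replace the ``feeding back'' step by this dominated-convergence argument; the same remark applies to item $\mathrm{(iv)}$, where your appeal to the vanishing of $\bar A^\e_{h,\ell}-\pa_\ell h_\th$ is otherwise in line with the paper.
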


\begin{proof}
According to the definition  of  $\d^{\e}X_\th$, we have
                \begin{align}\label{4.12}
                \left\{ \begin{aligned}
                d\d^{\e}X_\th(t) & =\bigg\{\frac{1}{\e}\([b^\e_\th(t)-\bar{\si}^\e_\th(t)h^\e_\th(t)]
                                     - [b_\th(t)-\bar{\si}_\th(t)h_\th(t)] \)\\
                                & \qq  -\(\big[\pa_xb_\th(t)-\pa_x\bar{\si}_\th(t)h_\th(t)-\bar{\si}_\th(t)\pa_xh_\th(t)\big]X^{1}_\th(t)\\
                                &  \qq  +\big[\pa_vb_\th(t)-\pa_v\bar{\si}_\th(t)h_\th(t)\big](v(t)-\h{v}(t))\)\bigg\}dt\\
                                &\qq+\bigg\{\frac{1}{\e}\(\si^\e_\th(t)-\si_\th(t)\)-\[\pa_x\si_\th(t)X^{1}_\th(t)
                                     +\pa_v\si_\th(t)(v(t)-\h{v}(t))\]\bigg\}dW(t)\\
                                &\qq +\bigg\{\frac{1}{\e}\(\bar{\si}^\e_\th(t)-\bar{\si}_\th(t)\)-
                                     \[\pa_x\bar{\si}_\th(t)X^{1}_\th(t)+\pa_v\bar{\si}_\th(t)(v(t)-\h{v}(t))\]\bigg\}dG(t)\\
                                &\qq  +\bigg\{\frac{1}{\e}\(\b^\e_\th(t)-\b_\th(t)\)- \[\pa_x\b_\th(t)X^{1}_\th(t)
                                     +\pa_v\b_\th(t)(v(t)-\h{v}(t))\]\bigg\}\bullet dM(t),\ t\in[0,T],\\
                 \d^\e X_\th(0) & =0.
                \end{aligned}   \right.
                \end{align}
                For simplicity, we denote, for  $\psi_\th=b_\th,\si_\th,\bar{\si}_\th,\b_\th,h_\th$,
                \begin{equation}\nonumber
                \begin{aligned}
                \pa_x\psi^\e_\th(t): & =\int_0^1\pa_x\psi_\th(t,\h{X}_\th(t)+\lambda\e(X^1_\th(t)+\d^\e X_\th(t)),
                                      \h{v}(t)+\l\e (v(t)-\h{v}(t)), \a(t-))d\l, \\
                A_{1,\th}^\e(t): & =\[(\pa_xb^\e_\th(t)-\pa_xb_\th(t))+(\pa_x\bar{\si}^\e_\th(t)-\pa_x\bar{\si}_\th(t))h_\th(t)
                                       +(\pa_xh^\e_\th(t)-\pa_xh_\th(t))\bar{\si}_\th(t)\\
                                 &\qq      +\pa_x h_\th^\e(t)(\bar{\si}_\th^\e (t)-\bar{\si}_\th(t))\Big] X^{1}_\th(t)\\
                                 & \qq +\[(\pa_vb^\e_\th(t)-\pa_vb_\th(t))+(\pa_v\bar{\si}^\e_\th(t)-\pa_v\bar{\si}_\th(t))
                                           h_\th(t)\](v(t)-\h{v}(t)),\\
                A_{2,\th}^\e(t): & =\[\pa_x\si^\e_\th(t)-\pa_x\si_\th(t)\] X^{1}_\th(t)
                                      +\[\pa_v\si^\e_\th(t)-\pa_v\si_\th(t)\](v(t)-\h{v}(t)),\\
                 A_{3,\th}^\e(t): & =\[\pa_x\bar{\si}^\e_\th(t)-\pa_x\bar{\si}_\th(t)\] X^{1}_\th(t)
                                    +\[\pa_v\bar{\si}^\e_\th(t)-\pa_v\bar{\si}_\th(t)\](v(t)-\h{v}(t)),\\
                 A_{4,\th,ij}^\e(t):& =\[\pa_x\b^\e_{\th,ij}(t)-\pa_x\b_{\th,ij}(t)\] X^{1}_\th(t)
                                    +\[\pa_v\b^\e_{\th,ij}(t)-\pa_v\b_{\th,ij}(t)\](v(t)-\h{v}(t)).\\
                \end{aligned}
                \end{equation}
Then Eq. (\ref{4.12}) can be written as the following linear SDE
                \begin{equation*}
                \left\{  \begin{aligned}
                d\d^{\e}X_\th(t) & =\[\big(\pa_xb^\e_\th(t)+\pa_x\bar{\si}^\e_\th(t)h_\th(t)+\pa_x h^\e_\th(t)\bar{\si}_\th(t)\\
                                 & \qq + \pa_x h_\th^\e(t)(\bar{\si}_\th^\e(t)-\si_\th(t))\big)\d^{\e}X_\th(t)
                                     +A_{1,\th}^\e(t)\]dt\\
                                 &\q +\[\pa_x\si^\e_\th(t)\d^{\e}X_\th(t)+A_{2,\th}^\e(t)\]dW(t)
                                     +\[\pa_x\bar{\si}^\e_\th(t)\d^{\e}X_\th(t)+A_{3,\th}^\e(t)\Big]dG(t)\\
                                 &\q +\[\pa_x\b^\e_\th(t)\d^{\e}X_\th(t)+A_{4,\th}^\e(t)\]\bullet dM(t),\\
                 \d^{\e}X_\th(0) & =0.
                \end{aligned} \right.
                \end{equation*}
                From \autoref{th 3.2} and (\ref{4.12-1}), there exists a constant $C>0$ depending on $L,T,\sum\limits_{i,j=1,i\neq j}^I \lambda_{ij}$ such that
              \begin{equation*}
               \begin{aligned}
               \dbE\Big[\sup_{0\leq t\leq T}|\d^{\e}X_\th(t)|^4 \Big]
              & \leq C\dbE\bigg[\(\int_0^T|A_{1,\th}^\e(t)|dt\)^4+\(\int_0^T|A_{2,\th}^\e(t)|^2dt\)^2+\(\int_0^T|A_{3,\th}^\e(t)|^2dt\)^2 \\
              &  \q  +\int_0^T\sum_{i,j=1,i\neq j}^I|A_{4,\th,ij}^\e(t)|^4 \l_{ij}\mathbf{1}_{\{\a(t)=i\}}dt\bigg] \\
              & \leq C\dbE\bigg[\int_0^T\big(|A_{1,\th}^\e(t)|^4+|A_{2,\th}^\e(t)|^4+|A_{3,\th}^\e(t)|^4
              +\sum_{i,j=1,i\neq j}^I|A_{4,\th,ij}^\e(t)|^4\l_{ij} \big)dt\bigg] \\
              &  \leq C\dbE\bigg[\int_0^T(|v(t)|^4+|\h{v}(t)|^4) dt\bigg].
              \end{aligned}
              \end{equation*}
             As  for  item  $\mathrm{(ii)}$, it follows from  dominated convergence theorem that
             \begin{align}\nonumber
             \dbE\bigg[\int_0^T\big(|A_{1,\th}^\e(t)|^4+|A_{2,\th}^\e(t)|^4+|A_{3,\th}^\e(t)|^4
              +\sum_{i,j=1,i\neq j}^I|A_{4,\th,ij}^\e(t)|^4\l_{ij} \big)dt\bigg]\ra 0, \ \text{as}\ \e\ra0.
             \end{align}
             Thereby, we obtain  $\mathrm{(ii)}$.
 We now focus on  item $\mathrm{(iii)}$. From the definition of $\delta^{\varepsilon}R_\theta$, we know
             \begin{equation}\label{4.17}
             \left\{ \begin{aligned}
                d\d^{\e}R_\th(t) &  =\bigg\{\frac{1}{\e}\(R^\e_\th(t)h^\e_\th(t)-\h{R}_\th(t)h_\th(t)\)- \(R^{1}_\th(t)h_\th(t)
                                     +\h{R}_\th(t)\pa_xh_\th(t)X^{1}_\theta(t)\)\bigg\}dG(t),\\
                  \d^{\e}R_\th(0) &  =0.
              \end{aligned}  \right.
             \end{equation}
             Denote
             \begin{equation*}
              \begin{aligned}
              A_{5,\th}^\e(t): & =(\pa _xh^\e_\th(t)-\pa_xh_\th(t))\h{R}_\th(t) X^{1}_\th(t)-\pa_xh^\e_\th(t)\h{R}_\th(t)\d^\e X_\th(t)
                                 +R_\th^{1}(t)(h^\e_\theta(t)-h_\th(t)).
              \end{aligned}
             \end{equation*}
              (\ref{4.17}) can be rewitten  as
            \begin{equation*}
               d\d^{\e}R_\theta(t)=\bigg\{h^\e_\th(t)\d^{\e}R_\th(t)+A_{5,\th}^\e(t)\bigg\}dG(t),\q \d^{\e}R_\th(0)=0.
            \end{equation*}
            Thanks to \autoref{th 3.2}, (\ref{4.7-2}), (\ref{4.12-1}), and item $\mathrm{(i)}$, it follows from H\"{o}lder inequality
            \begin{equation*}
            \begin{aligned}
            & \dbE\bigg[\sup_{0\leq t\leq T}|\d^{\e}R_\th(t)|^2 \bigg]\leq C\dbE\bigg[\int_0^T|A_{5,\th}^\e(t)|^2dt\bigg]\\
             \leq &C\dbE\bigg[\int_0^T(|\h{R}_\th(t) X^{1}_\th(t)|^2+|\h{R}_\th(t)\d^\e X_\th(t)|^2+|R_\th^{1}(t)|^2)dt\bigg]\\
             \leq &C\bigg\{\dbE \bigg[\int_0^T(|v(t)|^4+|\bar{v}(t)|^4)dt\bigg]\bigg\}^\frac{1}{2}.
            \end{aligned}
           \end{equation*}
           Finally,  item $\mathrm{(iv)}$ is a direct result of   item $\mathrm{(ii)}$ and dominated convergence theorem.
\end{proof}

           Next, let us investigate the variational BSDE: for each $\th\in  \Th$,
           \begin{equation}\label{11.3}
           \left\{  \begin{aligned}
           d Y_\th^{1}(t) & =-\bigg\{ \pa_xf_\th(t)X_\th^{1}(t)+ \pa_yf_\th(t)Y_\th^{1}(t)+ \pa_zf_\th(t)Z_\th^{1}(t)
                               + \pa_{\bar{z}}f_\th(t)\bar{Z}_\th^{1}(t)\\
                          & \q\q\q+ \pa_kf_\th(t)\sum_{i,j=1,i\neq j}^IK^{1}_{\theta,ij}(t)\l_{ij}\mathbf{1}_{\{\a(t-)=i\}}
                              +\langle\pa_v f_\th(t), v(t)-\h{v}(t)\rangle \bigg\}dt\\
                          & \q\q\q+Z_\th^{1}(t)dW(t)+\bar{Z}_\th^{1}(t)dG(t)+K_\th^{1}(t)\bullet dM(t),\  t\in[0,T],\\
            Y_\th^{1}(T) & =\pa_x\Phi_\th(T)X_\th^{1}(T).
            \end{aligned}  \right.
           \end{equation}
            From \autoref{assumption3}, $\pa_yf_\th,\pa_zf_\th,\pa_{\bar{z}}f_\th,\pa_kf_\th$ are uniformly bounded and $\pa_xf_\th, \pa_v f_\th,\pa_x\Phi_\th$ are bounded by $L(1+|x|+|v|)$. Thanks to \autoref{th 6.1}, the above equation admits
            a unique solution $(Y_\th^1(\cd),Z_\th^1(\cd),\bar{Z}_\th^1(\cd),K_\th^1(\cd))\in \cS^2_{\dbF}(0,T;\dbR) \ts \cH^{2,1}_{\dbF}(0,T;\dbR)\ts \cH^{2,1}_{\dbF}(0,T;\dbR)\ts {\cK}_{\dbF}^{2,1}(0,T;\dbR)$. Moreover, there exists a constant $C>0$ depending on $L,T,\sum\limits_{i,j=1,i\neq j}^I\l_{ij}$ such that
            \begin{equation}\label{4.21-1}
             \begin{aligned}
              &  \dbE\bigg[\sup_{t\in[0,T]}|Y_\th^{1}(t)|^2+\int_0^T(|Z_\th^{1}(t)|^2+|\bar{Z}_\th^{1}(t)|^2
                 +\sum_{i,j=1,i\neq j}^I|K^{1}_{\th,ij}(t)|^2\l_{ij}\mathbf{1}_{\{\a(t-)=i\}})dt\bigg]\\
                \leq &C\dbE \bigg[|x|^4+\int_0^T(|v(t)|^4+|\h{v}(t)|^4) dt\bigg].
            \end{aligned}
            \end{equation}
            In addition, $(X^1_\th(\cd),R^1_\th(\cd), Y^1_\th(\cd),Z^1_\th(\cd),\bar{Z}^1_\th(\cd),K^1_\th(\cd))$ are also continuous with respect to  $\th$, as shown below.
\begin{lemma}\label{le 6.3-0}\rm
             Under $\mathrm{(i)}$-$\mathrm{(iv)}$ of \autoref{assumption3}, it follows
            \begin{equation*}
            \begin{aligned}
            &  \lim_{\d\ra 0}\sup_{d(\th, \bar{\th})\leq\d }\dbE\bigg[\sup_{t\in[0,T]}\(|X^1_\th(t)-X^1_{\bar{\th}}(t)|^4
                    +|R^1_\th(t)-R^1_{\bar{\th}}(t)|^2+|Y^1_\th(t)-Y^1_{\bar{\th}}(t)|^2\)\\
            & \  +\int_0^T(|Z^1_\th(t)-Z^1_{\bar{\th}}(t)|^2+|\bar{Z}^1_\th(t)-\bar{Z}^1_{\bar{\th}}(t)|^2
            +\sum_{i,j=1,i\neq j}^I|K^1_{\th,ij}(t)-K^1_{\bar{\th},ij}(t)|^2\l_{ij}\mathbf{1}_{\{\a(t-)=i\}})dt\bigg]=0.
            \end{aligned}
            \end{equation*}
\end{lemma}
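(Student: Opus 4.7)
The plan is to prove the continuity component-by-component, in the order $X^1$, $R^1$, and finally $(Y^1,Z^1,\bar Z^1,K^1)$, so that each estimate can feed into the next. The overall strategy is, in each case, to subtract the defining equations for parameters $\th$ and $\bar\th$, recognize the difference as a linear (B)SDE with bounded coefficients plus an inhomogeneous term, and then apply the a priori estimates \eqref{6.112} and \eqref{6.4-1}.

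\emph{Stage 1 ($X^1$-estimate).} Set $\eta^{1,\th,\bar\th}(t):=X^1_\th(t)-X^1_{\bar\th}(t)$. Subtracting equation \eqref{11.2} for $\th$ and $\bar\th$, one rewrites each coefficient difference such as $\partial_x b_\th(t,\h X_\th(t),\h v(t),\a(t-))-\partial_x b_{\bar\th}(t,\h X_{\bar\th}(t),\h v(t),\a(t-))$ as the sum of
\[
\bigl[\partial_x b_\th-\partial_x b_{\bar\th}\bigr](t,\h X_\th(t),\h v(t),\a(t-))\quad\text{and}\quad \partial_x b_{\bar\th}(t,\h X_\th(t),\ldots)-\partial_x b_{\bar\th}(t,\h X_{\bar\th}(t),\ldots),
\]
bounded respectively by $L\,d(\th,\bar\th)$ (using \autoref{assumption3}(iv)) and by $L|\h X_\th(t)-\h X_{\bar\th}(t)|$ (using \autoref{assumption3}(iii)); the analogous splittings are made for $\si,\bar\si,\beta,h$. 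This yields a linear SDE for $\eta^{1,\th,\bar\th}$ whose homogeneous coefficients are uniformly bounded and whose inhomogeneity $I^{\th,\bar\th}_X(t)$ is controlled pointwise by $\big(d(\th,\bar\th)+|\h X_\th(t)-\h X_{\bar\th}(t)|\big)\big(|X^1_{\bar\th}(t)|+|v(t)-\h v(t)|\big)$. Applying \eqref{6.112} with $\beta=4$ and then the Cauchy--Schwarz inequality together with the uniform $L^8$-bounds on $X^1_{\bar\th}$, $\h X_\th$ and the $L^4$-integrability of $v-\h v$ (the former sharpened from \eqref{4.12-1} by the same SDE argument in any $\b\ge2$), the bound $\mathbb{E}\sup_t|\eta^{1,\th,\bar\th}(t)|^4\to0$ follows via \autoref{le 4.2} and dominated convergence.

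\emph{Stage 2 ($R^1$-estimate).} With $\Delta^{1,\th,\bar\th}(t):=R^1_\th(t)-R^1_{\bar\th}(t)$, subtracting \eqref{11.1} produces a linear SDE driven only by $G$. The $h_\th$ term decomposes as $h_\th(t)\Delta^{1,\th,\bar\th}(t)+R^1_{\bar\th}(t)(h_\th-h_{\bar\th})(t)$ and the $\h R_\th\partial_x h_\th X^1_\th$ term is split as
\[
\h R_\th\partial_x h_\th(t)\bigl(X^1_\th-X^1_{\bar\th}\bigr)(t)+\h R_\th\bigl(\partial_x h_\th-\partial_x h_{\bar\th}\bigr)(t,\h X_\th,\a)X^1_{\bar\th}(t)+\bigl(\h R_\th-\h R_{\bar\th}\bigr)\partial_x h_{\bar\th}(t,\h X_\th,\a)X^1_{\bar\th}(t),
\]
plus a further $\h R_{\bar\th}\bigl[\partial_x h_{\bar\th}(t,\h X_\th,\a)-\partial_x h_{\bar\th}(t,\h X_{\bar\th},\a)\bigr]X^1_{\bar\th}(t)$ piece. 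The estimate \eqref{6.112} for $\beta=2$, together with the boundedness of $h_\th,\partial_x h_\th$, the $L^p$-moments of $\h R_\th,(\h R_\th)^{-1}$ from \eqref{4.7-2}, \autoref{le 4.2}, \autoref{le 5.3-0}, and Stage 1, yields $\mathbb{E}\sup_t|\Delta^{1,\th,\bar\th}(t)|^2\to 0$ uniformly over $d(\th,\bar\th)\le\delta$ as $\delta\to 0$.

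\emph{Stage 3 ($Y^1$-estimate).} Set $\xi(t):=Y^1_\th(t)-Y^1_{\bar\th}(t)$ and analogously $\gamma,\bar\gamma,\zeta$ for the $Z$-, $\bar Z$-, $K$-components. Subtract \eqref{11.3}; in the generator, differences of each $\partial_\ell f_\th(t)$ applied to its corresponding variable are split into a piece involving only $\xi,\gamma,\bar\gamma,\zeta$ (with uniformly bounded coefficients $\partial_y f_{\bar\th},\partial_z f_{\bar\th},\partial_{\bar z}f_{\bar\th},\partial_k f_{\bar\th}$) and an inhomogeneous remainder $I^{\th,\bar\th}_Y(t)$. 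The remainder collects terms of the form
\[
\bigl[\partial_\ell f_\th-\partial_\ell f_{\bar\th}\bigr](t)\,X^1_{\bar\th}(t),\qquad \bigl[\partial_\ell f_{\bar\th}(t,\Pi_\th^{\h v})-\partial_\ell f_{\bar\th}(t,\Pi_{\bar\th}^{\h v})\bigr]X^1_{\bar\th}(t),\qquad \partial_\ell f_\th(t)\,\eta^{1,\th,\bar\th}(t),
\]
and analogous $v-\h v$ terms, plus the terminal decomposition $\pa_x\Phi_\th(\h X_\th(T),\a(T))\eta^{1,\th,\bar\th}(T)+\bigl[\pa_x\Phi_\th(\h X_\th(T),\a(T))-\pa_x\Phi_{\bar\th}(\h X_{\bar\th}(T),\a(T))\bigr]X^1_{\bar\th}(T)$. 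The BSDE estimate \eqref{6.4-1} reduces the problem to showing $\mathbb{E}\int_0^T|I^{\th,\bar\th}_Y(t)|^2dt+\mathbb{E}|\text{terminal}|^2\to 0$. Using \autoref{assumption3}(iv) and \autoref{assumption4}(iii) for the first type of term and \autoref{assumption3}(iii), \autoref{le 4.2}, and Stage 1 for the second and third, together with Cauchy--Schwarz and the uniform $L^4$-moments of $X^1_\th,\hat X_\th,v,\hat v$, we conclude the claim by dominated convergence.

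\emph{Main obstacle.} The only delicate point is Stage 3, where $\pa_x f_\th, \pa_v f_\th, \pa_x\Phi_\th$ are only of linear growth in $(x,v)$ (not bounded), so the products $\pa_x f_\th(t)X^1_\th(t)$ and $\pa_x\Phi_\th(\h X_\th(T))X^1_\th(T)$ are genuinely quadratic in $(\h X,X^1,v)$. Controlling them uniformly in $\th$ forces us to work with the $L^4$-moment bounds on $\h X_\th,X^1_\th$ (which are indeed uniform in $\th$ by \autoref{le 4.1} and \eqref{4.12-1}) and to pair them carefully with the $d(\th,\bar\th)$-small factors via Cauchy--Schwarz before passing to the limit.
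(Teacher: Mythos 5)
Your proposal follows essentially the same three-stage route as the paper's proof: subtract the variational equations for $\th$ and $\bar\th$, split each coefficient difference into a $d(\th,\bar\th)$-Lipschitz piece (via \autoref{assumption3}(iv)) and an $|\h X_\th-\h X_{\bar\th}|$-Lipschitz piece (via \autoref{assumption3}(iii)), and close the estimates in the order $X^1$, $R^1$, $(Y^1,Z^1,\bar Z^1,K^1)$ using \eqref{6.112}, \eqref{6.4-1}, \autoref{le 4.2}, \autoref{le 5.3-0} and Cauchy--Schwarz; your explicit remark that higher ($L^8$-type) moments are needed to pair the unbounded factors $\pa_x f_\th X^1_\th$, $\pa_x\Phi_\th X^1_\th$ with the small $d(\th,\bar\th)$ factors is a point the paper leaves implicit. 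The only slip is the citation of \autoref{assumption4}(iii) in Stage~3 --- the terminal term involves $\pa_x\Phi_\th$, which is governed by \autoref{assumption3}(iv), not by the assumptions on $(\Psi_\th,\L_\th)$ --- but this does not affect the argument.
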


\begin{proof}
 The proof is split into three steps.

             \emph{Step 1 ($X$-estimate).}\ Denote $\eta^1=X_\th^1-X^1_{\bar{\th}},\xi^1=Y_\th^1-Y^1_{\bar{\th}},\g^1=Z_\th^1-Z^1_{\bar{\th}},
             \bar{\g}^1=\bar{Z}_\th^1-\bar{Z}^1_{\bar{\th}}, \z^1_{ij}=K^1_{\th,ij}- K^1_{\bar{\th},ij}$ and denote for $l=\si,\bar{\si},$
             \begin{equation*}
             \begin{aligned}
               E_b(t)&=\bigg[\(\pa_xb_\th(t)-\pa_x\bar{\si}_\th(t)h_\th(t)-\bar{\si}_\th(t)\pa_xh_\th(t)\)
                         -\(\pa_xb_{\bar{\th}}(t)-\pa_x\bar{\si}_{\bar{\th}}(t)h_{\bar{\th}}(t)
                         -\bar{\si}_{\bar{\th}}(t)\pa_xh_{\bar{\th}}(t)\)\bigg]X_\th^1(t) \\
             &  \qq +\bigg[(\pa_vb_\th(t) -\pa_v\bar{\si}_\th(t)h_\th(t))-(\pa_vb_{\bar{\th}}(t)
                -\pa_v\bar{\si}_{\bar{\th}}(t)h_{\bar{\th}}(t))\bigg](v(t)-\bar{v}(t)),\\
               E_l(t)&=\bigg[ \pa_x l_\th(t)-\pa_x l_{\bar{\th}}(t)  \bigg]X_\th^1(t)
                   +\bigg[ \pa_v l_\th(t)-\pa_v l_{\bar{\th}}(t)  \bigg](v(t)-\bar{v}(t)),\\
               E_{\b,ij}(t)&=\bigg[ \pa_x\b_{\th,ij}(t)-\pa_x\b_{\bar{\th},ij}(t)  \bigg]X_\th^1(t)
                   +\bigg[ \pa_v\b_{\th,ij}(t)-\pa_v\b_{\bar{\th},ij}(t)  \bigg](v(t)-\bar{v}(t)).
             \end{aligned}
             \end{equation*}
             According to \autoref{th 3.2},  we obtain
             \begin{equation*}
             \begin{aligned}
             & \dbE\[\sup_{t\in[0,T]}|X^1_\th(t)-X^1_{\bar{\th}}(t)|^4  \]
              \leq C\dbE\[\int_0^T\(|E_b(t)|^4+|E_\si(t)|^4+|E_{\bar{\si}}(t)|^4
                  +\sum_{i,j=1,i\neq j}^I|E_{\b,ij}(t)|^4\l_{ij}\mathbf{1}_{\{\a(t-)=i\}} \)dt\].
             \end{aligned}
             \end{equation*}
             Since $\pa_xb_\th, \pa_x\si_\th,\pa_x\bar{\si}_\th,\pa_x\b_{\th,ij},\pa_x h_\th, h_\th$ and $\pa_vb_\th,\pa_v\si_\th,\pa_v\bar{\si}_\th,\pa_v\b_{\th,ij}$
             are Lipschitz with respect to $(x,\th)$, as well as $h_\th,\bar{\si}_\th,\pa_x\bar{\si}_\th$ are bounded by the constant $L$,
             we have that $\dbE\[\sup\limits_{t\in[0,T]}|X^1_\th(t)-X^1_{\bar{\th}}(t)|^4  \]\leq Ld(\th,\bar{\th})^4.$

             \emph{Step 2 ($R$-estimate).}\ Denote
             $
             F_R(t)=R^1_{\bar{\th}}(t)(h_\th(t)-h_{\bar{\th}}(t))+\h{R}_\th(t)\pa_xh_\th(t)X_\th^1(t)
             -\h{R}_{\bar{\th}}(t)\pa_xh_{\bar{\th}}(t)X_{\bar{\th}}^1(t).
             $
            According to   \cite[Lemma 2.3]{Hu-Wang-20}, we obtain
            $
            \dbE\[\sup\limits_{t\in[0,T]}|R^1_\th(t)-R^1_{\bar{\th}}(t)|^2  \]\leq C\dbE\[\int_0^T|F_R(t)|^2dt\].
           $
           In addition, since
            \begin{equation*}
            \begin{aligned}
           & h_\th(t)-h_{\bar{\th}}(t)=h_\th(t,\bar{X}_\th(t),\a(t-))-h_{\bar{\th}}(t,\bar{X}_{\bar{\th}}(t),\a(t-) )\\
            =&h_\th(t,\bar{X}_\th(t),\a(t-))-h_{\bar{\th}}(t,\bar{X}_\th(t),\a(t-) )+
            h_{\bar{\th}}(t,\bar{X}_\th(t),\a(t-))-h_{\bar{\th}}(t,\bar{X}_{\bar{\th}}(t),\a(t-) ),
            \end{aligned}
            \end{equation*}
            and
            \begin{equation*}
            \begin{aligned}
            & \h{R}_\th(t)\pa_xh_\th(t)X_\th^1(t)-\h{R}_{\bar{\th}}(t)\pa_xh_{\bar{\th}}(t)X_{\bar{\th}}^1(t)  \\
            =&(\h{R}_\th(t)- \h{R}_{\bar{\th}}(t))\pa_xh_\th(t)X_\th^1(t)+\h{R}_{\bar{\th}}(t)(\pa_xh_\th(t)
               -\pa_xh_{\bar{\th}}(t)   )X_\th^1(t)
            +\h{R}_{\bar{\th}}(t)\pa_xh_{\bar{\th}}(t)(X_\th^1(t)-X_{\bar{\th}}^1(t)),
            \end{aligned}
            \end{equation*}
           it follows from the Lipschitz property of $h_\th, \pa_x h_\th$ with respect to $(x,\th)$, the boundness of $\pa_x h_\th$
           and   \autoref{le 4.1}, \autoref{le 4.2},
           \autoref{le 5.3-0}, (\ref{4.7-2}), (\ref{4.12-1}) that
           $
           \dbE\[\sup\limits_{t\in[0,T]}|R^1_\th(t)-R^1_{\bar{\th}}(t)|^2  \]\leq Cd(\th,\bar{\th})^2.
           $

            \emph{Step 3 ($Y$-estimate).} Denote for $l=x,y,z,\bar{z},k,$
            \begin{equation*}
            \begin{aligned}
            I^{1,1}&  =  \pa_x\Phi(\h{X}_{\bar{\th}}(T))\cl{\eta}(T),\  I^{1,2}=\(\pa_x\Phi_\th(\h{X}_\th(T))-\pa_x\Phi_{\bar{\th}}
                        (\h{X}_{\bar{\th}}(T))\)\h{X}_\th(T),\\
            \h{\Pi}_{\th}(t)& =(\h{X}_\th(t),\h{Y}_\th(t),\h{Z}_\th(t),\h{\bar{Z}}_\th(t),\sum_{i,j=1,i\neq j}^I\h{K}_{\th,ij}(t)\l_{ij}\mathbf{1}_{\{\a(t-)=i\}}),\\
            G_l(t) & =  \pa_lf_{\bar{\th}}(t,\h{\Pi}_{\th}(t), \h{v}(t),\a(t-)),\\
            H(t)   & =  \[\pa_vf_\th(t,\h{\Pi}_{\th}(t), \h{v}(t),\a(t-))-\pa_vf_{\bar{\th}}(t,\h{\Pi}_{\th}(t), \h{v}(t),\a(t-))\]
                        (v(t)-\bar{v}(t))\\
                   &\q +\[ \pa_xf_\th(t,\h{\Pi}_{\th}(t), \h{v}(t),\a(t-))-G_x(t)  \]X^1_\th(t)
                    +\[ \pa_yf_\th(t,\h{\Pi}_{\th}(t), \h{v}(t),\a(t-))-G_y(t)  \]Y^1_\th(t)\\
                   & \q+\[ \pa_zf_\th(t,\h{\Pi}_{\th}(t), \h{v}(t),\a(t-))-G_z(t)  \]Z^1_\th(t)
                    +\[ \pa_{\bar{z}}f_\th(t,\h{\Pi}_{\th}(t), \h{v}(t),\a(t-))-G_{\bar{z}}(t)  \]\bar{Z}^1_\th(t)\\
                   &\q + \sum_{i,j=1,i\neq j}^I\[\pa_kf_\th(t,\h{\Pi}_{\th}(t), \h{v}(t),\a(t-))-G_k(t)\]
                   K_{\th,ij}^1(t)\l_{ij}(t)\mathbf{1}_{\{\a(t-)=i\}}.
            \end{aligned}
            \end{equation*}
           Thanks to \autoref{th 6.1}, we have
           \begin{equation*}
            \begin{aligned}
            &    \dbE\[\sup_{t\in[0,T]}|\xi^1(t)|^2+\int_0^T(|\g^1(t)|^2+|\bar{\g}^1(t)|^2
                      +\sum_{i,j=1,i\neq j}^I|\zeta^1_{ij}(t)|^2\l_{ij}\mathbf{1}_{\{\a(t-)=i\}})dt   \]\\
               \leq& C\dbE\[|I^{1,1}|^2+|I^{1,2}|^2+\int_0^T(|G_x(t)\eta(t)|^2+|H(t)|^2)dt\].
            \end{aligned}
            \end{equation*}
           On the one hand,  \autoref{assumption3}   allows to show
           $
           \dbE\[|I^{1,1}|^2 +\int_0^T|G_x(t)\eta(t)|^2 dt\]\leq Cd(\th, \bar{\th})^2.
           $
           On the other hand, in analogy to  $X$-estimate, one has
           $
           \dbE\[|I^{1,2}|^2+\int_0^T|H(t)|^2dt\]\leq Cd(\th, \bar{\th})^2.
           $
           This finishes the proof.
\end{proof}

           { In the following,  for $\Xi_\th= Y_\th, Z_\th, \bar{Z}_\th, K_{\th,ij}$, we define
            $\d^{\e}\Xi_\th(t)\deq \frac{1}{\e}(\Xi^\e_\th(t)-\h{\Xi}_\th(t))-\Xi^{1}_\th(t).$}
            The following lemma states the boundness and the continuity of $(\d^\e Y_\th(\cd), \d^\e Z_\th(\cd), \d^\e \cl Z_\th(\cd),\d^\e K_{\th}(\cd))$ with respect to $\th$.

\begin{lemma}\label{le 5.2}\rm
            Under  \autoref{assumption3}, there exists a constant $C>0$ depending on $L,T,\sum\limits_{i,j=1,i\neq j}^I\l_{ij}$ such that
            \begin{equation*}
             \begin{aligned}
              \mathrm{(i)}\ & \dbE\bigg[\sup_{t\in[0,T]}|\d^\e Y_\th(t)|^2+\int_0^T(|\d^\e Z_\th(t)|^2+|\d^\e \cl Z_\th(t)|^2
                            +\sum\limits_{i,j=1,i\neq j}^I|\d^\e K_{\th ,ij}(t)|^2\l_{ij}\mathbf{1}_{\{\a(t-)=i\}})dt\bigg]\\
              & \leq C\dbE \bigg[|x|^4+\int_0^T(|v(t)|^4+|\h{v}(t)|^4) dt\bigg].\\
              \mathrm{(ii)}\ & \lim_{\e\ra0}\sup_{\th\in\Th} \dbE\bigg[\sup_{t\in[0,T]}|\d^\e Y_\th(t)|^2 +\int_0^T(|\d^\e Z_\th(t)|^2+|\d^\e
                                \bar{Z}_\th(t)|^2
                             +\sum\limits_{i,j=1,i\neq j}^I|\d^\e K_{\th ,ij}(t)|^2\l_{ij}\mathbf{1}_{\{\a(t-)=i\}})dt\bigg]=0.
              \end{aligned}
            \end{equation*}
\end{lemma}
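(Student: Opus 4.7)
The plan is to derive a linear BSDE satisfied by the tuple $(\d^\e Y_\th,\d^\e Z_\th,\d^\e \bar Z_\th,\d^\e K_\th)$ and then apply the $L^2$-estimate for BSDEs with regime switching (\autoref{th 6.1}, estimate \rf{6.4-1}) to both dominate and take the limit of the error.

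First I would write, for $l=x,y,z,\bar z,k$ and $\psi_\th=f_\th,\Phi_\th$,
\begin{equation*}
\pa_l\psi^\e_\th(t):=\int_0^1\pa_l\psi_\th\bigl(t,\h\Pi_\th(t)+\l\e(\Pi^1_\th(t)+\d^\e\Pi_\th(t)),\h v(t)+\l\e(v(t)-\h v(t)),\a(t-)\bigr)d\l,
\end{equation*}
and analogously $\pa_v f^\e_\th(t)$ and $\pa_x\Phi^\e_\th(T)$, so that by Taylor's formula and the definitions of $\d^\e X_\th$ and $\d^\e Y_\th$, the quadruple $(\d^\e Y_\th,\d^\e Z_\th,\d^\e\bar Z_\th,\d^\e K_\th)$ solves a linear BSDE with driver of the form $\pa_y f^\e_\th\,\d^\e Y_\th+\pa_z f^\e_\th\,\d^\e Z_\th+\pa_{\bar z} f^\e_\th\,\d^\e\bar Z_\th+\pa_k f^\e_\th\sum_{i\ne j}\d^\e K_{\th,ij}\l_{ij}\mathbf1_{\{\a(t-)=i\}}+\rho^\e_\th(t)$, and terminal value $\pa_x\Phi^\e_\th(T)\,\d^\e X_\th(T)+(\pa_x\Phi^\e_\th(T)-\pa_x\Phi_\th(T))X^1_\th(T)$, where the remainder
\begin{equation*}
\rho^\e_\th(t)=\pa_x f^\e_\th(t)\d^\e X_\th(t)+\bigl(\pa_x f^\e_\th(t)-\pa_x f_\th(t)\bigr)X^1_\th(t)+\cdots+\bigl(\pa_v f^\e_\th(t)-\pa_v f_\th(t)\bigr)(v(t)-\h v(t))
\end{equation*}
collects the analogous differences in the $y,z,\bar z,k,v$-directions. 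Since $\pa_y f_\th,\pa_z f_\th,\pa_{\bar z}f_\th,\pa_k f_\th$ are uniformly bounded by $L$, \autoref{th 6.1} applies directly.

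For (i), the BSDE estimate \rf{6.4-1} reduces the goal to bounding $\dbE|\text{terminal}|^2+\dbE\int_0^T|\rho^\e_\th(t)|^2dt$. Assumption \autoref{assumption3}(i) gives $|\pa_x\Phi^\e_\th|+|\pa_x f^\e_\th|+|\pa_v f^\e_\th|\leq C(1+|\h X_\th|+\e(|X^1_\th|+|\d^\e X_\th|)+|\h v|+\e|v-\h v|)$, so Cauchy–Schwarz combined with the $L^4$-estimates from \autoref{le 4.1}, \rf{4.12-1}, and \autoref{le 5.1}(i) controls the terminal by $C\dbE[(1+|\h X_\th(T)|^4)]^{1/2}\dbE[|\d^\e X_\th(T)|^4]^{1/2}+\cdots$, and analogously for $\int_0^T|\rho^\e_\th|^2 dt$. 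All constants are independent of $\th$, producing the advertised bound $C\dbE[|x|^4+\int_0^T(|v|^4+|\h v|^4)dt]$.

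For (ii), the key observation is that the Lipschitz property of the derivatives in \autoref{assumption3}(iii)--(iv) yields $|\pa_l f^\e_\th(t)-\pa_l f_\th(t)|+|\pa_x\Phi^\e_\th(T)-\pa_x\Phi_\th(T)|\leq C\e(|X^1_\th|+|\d^\e X_\th|+|v-\h v|)$ uniformly in $\th$, while the ``principal'' part $\pa_x f^\e_\th\,\d^\e X_\th$ and $\pa_x\Phi^\e_\th\,\d^\e X_\th(T)$ is governed by $\d^\e X_\th$. Applying Hölder and invoking \autoref{le 5.1}(ii) (which gives $\sup_{\th\in\Th}\dbE[\sup_{t}|\d^\e X_\th(t)|^4]\to 0$) together with dominated convergence on the remaining bounded-in-$L^4$ factors, each summand in $\dbE|\text{terminal}|^2+\dbE\int_0^T|\rho^\e_\th(t)|^2 dt$ converges to $0$ uniformly in $\th$; \autoref{th 6.1} then transports this vanishing to $(\d^\e Y_\th,\d^\e Z_\th,\d^\e\bar Z_\th,\d^\e K_\th)$.

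I expect the main obstacle to be the uniform-in-$\th$ control of the two linear-growth coefficients $\pa_x\Phi_\th$ and $\pa_x f_\th$: one must avoid losing uniformity when trading powers via Hölder, which forces the $L^4$-bounds on $\h X_\th$ and $X^1_\th$ (\autoref{le 4.1}, \rf{4.12-1}) to be used at the right exponent. Once this is handled, the rest of the argument is a direct combination of the BSDE estimate \rf{6.4-1}, the quantitative bounds from \autoref{le 5.1}, and dominated convergence.
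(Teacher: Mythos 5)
Your proposal matches the paper's own argument: the paper likewise rewrites the equation for $(\d^\e Y_\th,\d^\e Z_\th,\d^\e \bar Z_\th,\d^\e K_\th)$ as a linear BSDE with averaged coefficients $\pa_\ell f^\e_\th$, terminal data $B^\e_{1,\th}(T)+B^\e_{2,\th}(T)$ (your two terminal terms) and remainder $C^\e_{1,\th}$ (your $\rho^\e_\th$ minus the principal part $\pa_x f^\e_\th\,\d^\e X_\th$, which it carries separately), then applies \autoref{th 6.1} together with \autoref{le 5.1}, (\ref{4.12-1}), (\ref{4.21-1}) for (i) and dominated convergence for (ii). The approach and the supporting estimates are essentially identical, so no further comparison is needed.
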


\begin{proof}
 First, notice that $(\d^\e Y_\th(\cd),\d^\e Z_\th(\cd), \d^\e \bar{Z}_\th(\cd),\d^\e K_{\th}(\cd))$ solves the following equation
            \begin{equation}\label{4.22}
            \left\{ \begin{aligned}
              d \d^\e Y_\th(t) & =-\bigg\{ \frac{1}{\e}(f^\e_\th(t)-f_\th(t))-[\pa_xf_\th(t)X_\th^{1}(t)+ \partial_yf_\theta(t)Y_\theta^{1}(t)
                                 + \pa_zf_\th(t)Z_\theta^{1}(t)\\
                               & \hskip 1.1cm + \partial_{\bar{z}}f_\theta(t)\bar{Z}_\theta^{1}(t) +\pa_kf_\th(t)\sum_{i,j=1,i\neq j}^IK^{1}_{\th,ij}(t)
                                  \l_{ij}\mathbf{1}_{\{\a(t-)=i\}}+\langle \pa_v f_\th(t), v(t)-\h{v}(t)\rangle]\bigg\}dt\\
                               &\hskip 1.1cm  +\d^\e Z_\th(t)dW(t)+\d^\e \bar{Z}_\th(t)dG(t)+\d^\e K_{\th}(t)\bullet dM(t),\q t\in[0,T],\\
                 \d^\e Y_\th(T) & =\frac{1}{\e}(\Phi^\e_\th(T)-\Phi_\th(T))-\pa_x\Phi_\th(\h X_\th(T))X_\th^{1}(T).
             \end{aligned}   \right.
            \end{equation}
            Denote, for $\ell=x,y,z,\bar{z},k,v$,
            \begin{equation*}
             \begin{aligned}
             \Pi^\e_\th(t) & =(\h{X}_\th(t)+\l\e(\d^\e X_\th(t)+X^1_\th(t)), \h{Y}_\th(t)+\l\e(\d^\e Y_\th(t)+Y^1_\th(t)),
                               \h{Z}_\th(t)+\l\e(\d^\e Z_\th(t)+Z^1_\th(t)), \\
                           &  \q\, \h{\bar{Z}}_\th(t)+\l\e(\d^\e \cl{Z}_\th(t)+\cl{Z}^1_\th(t)),
                              \sum_{i,j=1,i\neq j}^I\(\h{K}_{\th,ij}(t)+\l\e(\d^\e K_{\th,ij}(t)+K^1_{\th,ij}(t))\)\l_{ij}\mathbf{1}_{\{\a(t-)=i\}},\\
                           & \q \h v(t)+\l\e(v(t)-\h v(t))),\\
                            \pa_\ell f^\e_\th(t) & =\int_0^1\pa_\ell f_\th(t,\Pi^\e_\th(t),  \a(t-))d\l,\\
                        \end{aligned}
                    \end{equation*}
 and
           \begin{equation*}
             \begin{aligned}
             B_{1,\th}^\e(T) & =\int_0^1\pa_x\Phi_\th(\h{X}_\th(T) +\l\e(X^{1}_\theta(T)+\d^\e X_\th(T)))d\l \d^\e X_\th(T),\\
             B_{2,\th}^\e(T) & =\int_0^1[\pa_x\Phi_\th(\h{X}_\th(T)+\l\e(X^{1}_\th(T)+\d^\e X_\th(T))-\pa_x\Phi_\th(\h{X}_\th(T))]d
              \l X^{1}_\th(T),\\
             C_{1,\th}^\e(t) & =\int_0^1[\pa_x f^\e_\th(t)-\pa_x f_\th(t)]X^1_\th(t)+[\pa_y f^\e_\th(t)-\pa_y f_\th(t)]Y^1_\th(t)
                                +[\pa_z f^\e_\th(t)-\pa_z f_\th(t)]Z^1_\th(t)\\
                             & \q  +[\pa_{\bar{z}} f^\e_\theta(t)-\pa_{\bar{z}} f_\th(t)] \cl{Z}^1_\th(t)
                                +[\pa_k f^\e_\th(t)-\pa_k f_\th(t)]  \sum_{i,j=1,i\neq j}^IK^1_{\th,ij}(t)\l_{ij}\mathbf{1}_{\{\a(t-)=i\}}\\
                             & \q +\langle\pa_v f^\e_\th(t)-\pa_v f_\th(t)), v(t)-\h{v}(t)\rangle.
            \end{aligned}
            \end{equation*}
            Hence,  Eq. (\ref{4.22}) can be rewritten as the following linear BSDE
             \begin{equation*}
             \left\{  \begin{aligned}
               d \d^\e Y_\th(t) &  =-\bigg\{ \pa_xf^\e_\th(t)\d^\e X_\th(t) +\pa_yf^\e_\th(t)\d^\e Y_\th(t)+\pa_zf^\e_\th(t)\d^\e Z_\th(t)\\
                                &  \qq\q+\pa_{\bar{z}}f^\e_\th(t)\d^\e \bar{Z}_\th(t)+ \pa_kf^\e_\th(t)\sum_{i,j=1,i\neq j}^I\d^\e K_{\th,ij}(t)
                                   \l_{ij}\mathbf{1}_{\{\a(t-)=i\}}+C_{1,\th}^\e(t)\bigg\}dt\\
                                & \qq \q+\d^\e Z_\th(t)dW(t)+\d^\e \bar{Z}_\th(t)dG(t)+\d^\e K_{\th}(t)\bullet dM(t),\ t\in[0,T],\\
                 \d^\e Y_\th(T) & =B_{1,\th}^\e(T)+B_{2,\th}^\e(T).
             \end{aligned} \right.
             \end{equation*}
             From \autoref{th 6.1},  item (i) of  \autoref{assumption3}  and (\ref{4.12-1}), (\ref{4.21-1}), item $\mathrm{(i)}$ of
             \autoref{le 5.1},  there exists a constant $C>0$ depending on $L,T,\sum_{i,j=1,i\neq j}^I\l_{ij}$ such that
             $$ \begin{aligned}
              &  \dbE\bigg[\sup_{t\in[0,T]}|\d^\e Y_\th(t)|^2+\int_0^T(|\d^\e Z_\th(t)|^2+|\d^\e \bar{Z}_\th(t)|^2+\sum_{i,j=1,i\neq j}^I
                 |\d^\e K_{\th ,ij}(t)|^2\l_{ij}\mathbf{1}_{\{\a(t-)=i\}})dt\bigg]\\
               \leq &C\dbE \bigg[|B_{1,\th}^\e(T)+B_{2,\th}^\e(T)|^2+ \int_0^T(| \pa_xf^\e_\th(t)\d^\e X_\th(t) |^2+   |C_{1,\th}^\e(t)|^{2})dt\bigg]\\
                \leq &C\dbE\bigg[|x|^4+\int_0^T(|v(t)|^4+|\bar{v}(t)|^4) dt\bigg].
              \end{aligned} $$
             The proof of item $\mathrm{(ii)}$ comes from dominated convergence theorem.
\end{proof}

Now, we define
              \begin{align}\nonumber
              \cQ^v=\bigg\{Q\in \cQ\Big|J(v(\cd))=\int_{\Th}  \dbE[R^v_\th(T)\Psi_\th(X_\th^v(T))+\L_\th(Y^v_\th(0))] Q(d\th)\bigg\}.
              \end{align}
             \noindent Note that $\cQ^v$ is nonempty.  Indeed, according to the definition of $J(v(\cd))$, there exists a sequence $Q^L\in \cQ$ such that $\int_{\Th}\dbE[R^v_\th(T)\Psi_\th(X_\th^v(T))+\L_\th(Y^v_\th(0))] Q^L(d\th)\geq J(v(\cdot))-\frac{1}{L}$. Notice that $\cQ$ is weakly compact, one can find a $Q^v\in \cQ$ such that a subsequent of $Q^L$ (if necessary) converges weakly to $Q^v$. Thanks to \autoref{le 4.1}, \autoref{le 4.2} and \autoref{le 5.3-0}, (\ref{4.7-2}) and  \autoref{assumption4},
             the map $\th\mapsto\dbE[R^v_\th(T)\Psi_\th(X_\th^v(T))+\L_\th(Y^v_\th(0))]$ is   bounded  and  continuous. Hence, we obtain
             $$
             \begin{aligned}
             &J(v(\cd))\geq\int_{\Th}\dbE[R^v_\th(T)\Psi_\th(X_\th^v(T))+\L_\th(Y^v_\th(0))] Q^v(d\th)\\
             =&\lim_{L\ra\infty}\int_\Th \dbE[R^v_\th(T)\Psi_\th(X_\th^v(T))+\L_\th(Y^v_\th(0))] Q^L(d\th)\geq J(v(\cd)).
             \end{aligned}
             $$
             Thereby, $\cQ^v$ is nonempty. Now, we can show the following variational inequality.
\begin{theorem}\label{le 5.3}\rm
             Let   \autoref{assumption3}  and  \autoref{assumption4}  hold true. Then
             \begin{equation*}
             \begin{aligned}
             0&\le \lim_{\e\ra0}\frac{1}{\e}\(J(v^\e(\cd))-J(\h{v}(\cd)) \)\\
             &=\sup_{Q\in \cQ^{\h{v}}}\int_\Th\dbE[R^{1}_\th(T)\Psi_\th(\h{X}_\th(T))+\h{R}_\th(T)\pa_x\Psi_\th(\h{X}_\th(T))X^{1}_\th(T)
                +\pa_y\L_\th(\h{Y}_\th(0)) Y^{1}_\th(0)  ]Q(d\th).
             \end{aligned}
             \end{equation*}
\end{theorem}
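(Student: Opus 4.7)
The nonnegativity $\lim_{\e\downarrow 0}\tfrac{1}{\e}(J(v^\e)-J(\h v))\ge 0$ is immediate from the optimality of $\h v$, since $v^\e\in\cV_{ad}$ by convexity of $\cV_{ad}$. The substance of the argument is the identification of the limit. For each $\th\in\Th$, put
\[
H(\th,v):=\dbE\big[R^v_\th(T)\Psi_\th(X^v_\th(T))+\L_\th(Y^v_\th(0))\big],\qquad
\cH(\th):=\dbE\big[R^1_\th(T)\Psi_\th(\h X_\th(T))+\h R_\th(T)\pa_x\Psi_\th(\h X_\th(T))X^1_\th(T)+\pa_y\L_\th(\h Y_\th(0))Y^1_\th(0)\big],
\]
so that $J(v)=\sup_{Q\in\cQ}\int_\Th H(\th,v)Q(d\th)$ and the target is $\sup_{Q\in\cQ^{\h v}}\int_\Th\cH(\th)Q(d\th)$.

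The first step is to establish the pointwise (in $\th$) Taylor expansion with control uniform in $\th$:
\[
\tfrac{1}{\e}\bigl(H(\th,v^\e)-H(\th,\h v)\bigr)\;\longrightarrow\;\cH(\th)\quad\hbox{as }\e\downarrow 0,\quad\hbox{uniformly in }\th\in\Th.
\]
To see this, write $R^\e_\th(T)-\h R_\th(T)=\e R^1_\th(T)+\e\delta^\e R_\th(T)$, $X^\e_\th(T)-\h X_\th(T)=\e X^1_\th(T)+\e\delta^\e X_\th(T)$ and $Y^\e_\th(0)-\h Y_\th(0)=\e Y^1_\th(0)+\e\delta^\e Y_\th(0)$, and apply the mean-value form of Taylor's theorem to $\Psi_\th$ and $\L_\th$ using \autoref{assumption4}. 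Cross terms are controlled through the $L^4$-boundedness and uniform-in-$\th$ smallness of $\delta^\e X_\th,\delta^\e R_\th,\delta^\e Y_\th$ given by \autoref{le 5.1} and \autoref{le 5.2}, together with the $L^p$-bounds on $R^1_\th,X^1_\th,Y^1_\th$ and the uniform $L^p$-bound \eqref{4.7-2} for $R^\e_\th,\h R_\th$. H\"{o}lder's inequality and Cauchy--Schwarz produce a remainder bounded by $o(1)$ times a constant independent of $\th$, using the supremum over $\th$ built into the statements of \autoref{le 5.1}(ii) and \autoref{le 5.2}(ii). The same estimates, combined with the continuity lemmas \autoref{le 4.2}, \autoref{le 5.3-0}, and \autoref{le 6.3-0}, and the Lipschitz dependence of $\Psi_\th,\pa_x\Psi_\th,\L_\th,\pa_y\L_\th$ on $\th$ in \autoref{assumption4}, show that $\th\mapsto H(\th,v)$ and $\th\mapsto\cH(\th)$ are bounded and continuous on $\Th$.

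Next I handle the supremum over $\cQ$. For the lower bound, fix any $Q\in\cQ^{\h v}$. By definition of $\cQ^{\h v}$ and the sup in $J(v^\e)$,
\[
J(v^\e)-J(\h v)\ge\int_\Th\bigl(H(\th,v^\e)-H(\th,\h v)\bigr)Q(d\th).
\]
Dividing by $\e$, invoking the uniform-in-$\th$ expansion above, and applying dominated convergence with respect to $Q$ yields
$\liminf_{\e\downarrow 0}\tfrac{1}{\e}(J(v^\e)-J(\h v))\ge\int_\Th\cH(\th)Q(d\th)$; taking supremum over $Q\in\cQ^{\h v}$ gives the lower bound. For the upper bound, pick for each $\e>0$ a maximiser $Q^\e\in\cQ^{v^\e}$, i.e.\ $J(v^\e)=\int_\Th H(\th,v^\e)Q^\e(d\th)$. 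Because $\int H(\th,\h v)\,Q^\e(d\th)\le J(\h v)$,
\[
\tfrac{1}{\e}(J(v^\e)-J(\h v))\;\le\;\int_\Th\tfrac{1}{\e}\bigl(H(\th,v^\e)-H(\th,\h v)\bigr)Q^\e(d\th).
\]
By the weak compactness of $\cQ$ (\autoref{assumption3}(v)) extract a subsequence $Q^{\e_k}\rightharpoonup Q^*\in\cQ$. Weak convergence combined with the boundedness and continuity of $\th\mapsto H(\th,\h v)$ and $\th\mapsto\cH(\th)$, and the uniform-in-$\th$ expansion, lets me pass to the limit on the right-hand side to obtain $\int_\Th\cH(\th)Q^*(d\th)$, and lets me identify $J(\h v)=\lim_k\int H(\th,v^{\e_k})Q^{\e_k}(d\th)=\int H(\th,\h v)Q^*(d\th)$, so that $Q^*\in\cQ^{\h v}$. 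Thus $\limsup_{\e\downarrow 0}\tfrac{1}{\e}(J(v^\e)-J(\h v))\le\int_\Th\cH(\th)Q^*(d\th)\le\sup_{Q\in\cQ^{\h v}}\int_\Th\cH(\th)Q(d\th)$, completing the equality.

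The main difficulty is the uniformity in $\th$: the convergence $\delta^\e X_\th,\delta^\e R_\th,\delta^\e Y_\th\to 0$ and the continuity estimates must all be uniform in $\th$ so that the $Q^\e$-integration (where $Q^\e$ itself depends on $\e$) can be controlled, and so that the weak-convergence argument can transfer the optimality from the pre-limit measures $Q^\e\in\cQ^{v^\e}$ to the limit $Q^*\in\cQ^{\h v}$. This uniformity is precisely what the $\sup_{\th\in\Th}$ in \autoref{le 5.1}(ii) and \autoref{le 5.2}(ii) and the uniform-in-$\th$ continuity in \autoref{le 4.2}, \autoref{le 5.3-0} and \autoref{le 6.3-0} were designed to provide; once it is in hand the rest is bookkeeping.
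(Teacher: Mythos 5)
Your proposal is correct and follows essentially the same route as the paper: the lower bound via an arbitrary $Q\in\cQ^{\h v}$ and the uniform-in-$\th$ first-order expansion (the paper's Step 1), the upper bound via maximisers $Q^{\e}\in\cQ^{v^{\e}}$ along a subsequence realising the $\limsup$, weak compactness of $\cQ$, and the boundedness/continuity of $\th\mapsto H(\th)$ (the paper's Steps 2--3), and finally the identification $Q^{*}\in\cQ^{\h v}$ to close the sandwich (Step 4). The only cosmetic difference is that the paper first fixes the subsequence $\e_{M}$ realising the $\limsup$ and then extracts the weakly convergent sub-subsequence, which you should also do explicitly so that the limit of the difference quotients along $Q^{\e_k}$ indeed equals the $\limsup$.
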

             \begin{proof}
              The proof is split into four steps.

              \emph{Step 1.}  Notice that for arbitrary $Q\in \mathcal{Q}^{\h{v}}$, we have
             \begin{align}\nonumber
             \begin{aligned}
               J(v^\e(\cd)) &  \geq\int_{\Th}  \dbE[R^\e_\th(T)\Psi_\theta(X_\theta^\varepsilon(T))
                          +\Lambda_\theta(Y^\varepsilon_\theta(0))] Q(d\theta),\\
              J(\h{v}(\cd)) &  =\int_{\Th}  \dbE[\h{R}_\theta(T)\Psi_\th(\h{X}_\th(T))+\L_\th(\h{Y}_\th(0))] Q(d\th).
             \end{aligned}
             \end{align}
             From  the fact $\frac{1}{\e}\(R^\e_\th(T)-\h R_\th(T)\)=\d^\e R_\th(T)+R^1(T), \frac{1}{\e}\(X^\e_\th(T)-\h X_\th(T)\)=\d^\e X_\th(T)+X^1(T),\\
             \frac{1}{\e}\(Y^\e_\th(T)-\h Y_\th(T)\)=\d^\e Y_\th(T)+Y^1(T),$ one has
             \begin{equation}\label{4.25-1}
              \begin{aligned}
              &  \frac{1}{\e}\( J(v^\e(\cd))-J(\h{v}(\cd)) \)\\
                \geq&\frac{1}{\e}\int_{\Th}  \dbE[R^\e_\th(T)\Psi_\th(X_\th^\e(T))-\h{R}_\th(T)\Psi_\th(\h{X}_\th(T))
                  +\L_\th(Y^\e_\th(0))-\L_\th(\h{Y}_\th(0))] Q(d\th)\\
                =&\int_{\Th}  \dbE\[\Psi_\th(X_\th^\e(T))(\d^\e R_\th(T)+R_\th^{1}(T))
                   +\h{R}_\th(T)\pa_x\Psi^\e_\th(T)(\d^\e X_\th(T)+X_\th^{1}(T))\\
                    &  \qq\q+\pa_x\L_\th^\e(0)(\d^\e Y_\th(0)+Y_\th^{1}(0))\] Q(d\th)\\
%
              =&\int_{\Th}  \dbE\[\Psi_\th(X_\th^\e(T))\d^\e R_\th(T)+\h{R}_\th(T)\pa_x\Psi^\e_\th(T)\d^\e X_\th(T)
                 +\pa_y\L_\th^\e(0)\d^\e Y_\th(0)\] Q(d\th)\\
             & \q +\int_{\Th}  \dbE\[\Psi_\th(\h{X}_\th(T))R_\th^{1}(T)+\h{R}_\th(T)\pa_x\Psi_\th(\h{X}_\th(T))X^{1}_\th(T)
               +\pa_y\L_\th(\h{Y}_\th(0)) Y^{1}_\th(0) \] Q(d\th),\\
             & \q+\int_{\Th}  \dbE\[(\Psi_\th(X_\th^\e(T))-\Psi_\th(\h{X}_\th(T))) R_\theta^{1}(T)
               + \h{R}_\th(T)(\pa_x\Psi^\e_\th(T)-\pa_x\Psi_\th(\h{X}_\th(T)))X_\th^{1}(T)\\
             & \q+(\pa_y\L_\th^\e(0)-\pa_y\L_\th(\h{Y}_\th(0)))Y_\th^{1}(0)\] Q(d\th),\\
             \end{aligned}
             \end{equation}
              where
              \begin{align}\label{6.21}
              \begin{aligned}
              \pa_x\Psi^\e_\th(T)&=\int_0^1\pa_x\Psi_\th( \h{X}_\th(T))+\l\e(\d^\e X_\th(T)+X_\th^{1}(T))d\l,\\
              \pa_x\L_\th^\e(0)&=\int_0^1 \pa_x\L_\th(\h{Y}_\th (0))+\l\e(\d^\e Y_\th(0)+Y_\th^{1}(0))d\l.
              \end{aligned}
              \end{align}
             Since $|\Psi_\th(X_\th^\e(T))|\leq L(1+|X_\th^\e(T)|^2)$,  $|\pa_x\Psi^\e_\th(T)|\leq L(1+|\h{X}_\th(T)|+|\d^\e X_\th(T)|+|X_\th^{1}(T)|)$ and $\pa_y\L^\e_\th(0)$ is uniformly bounded, we have
             \begin{equation*}
              \begin{aligned}
               &\Big|\dbE\big[\Psi_\th(X_\th^\e(T))\d^\e R_\th(T)+\h{R}_\th(T)\pa_x\Psi^\e_\th(T)\d^\e X_\th(T) +\pa_y\L_\th^\e(0)\d^\e Y_\th(0)\big]\Big|\\
              \leq& L\dbE\[(1+|X_\th^\e(T)|^2)|\d^\e R_\th(T)|
              +(1+|\h{X}_\th(T)|+|\d^\e X_\th(T)|+|X_\th^{1}(T)|) |\h{R}_\th(T)||\d^\e X_\th(T)|
              +|\d^\e Y_\th(0)|\].
              \end{aligned}
             \end{equation*}
             It follows from  \autoref{le 5.1} and \autoref{le 5.2} that
             \begin{equation}\label{4.27}
             \begin{aligned}
             \lim_{\e\ra0}\sup_{\th\in \Th}\dbE\[\Psi_\th(X_\th^\e(T))\d^\e R_\th(T) +\h R_\th(T)\pa_x\Psi^\e_\th(T)\d^\e X_\th(T)
               +\pa_y\L_\th^\e(0)\d^\e Y_\th(0)\]=0.
             \end{aligned}
             \end{equation}
Thanks to  \autoref{assumption4},  it follows
            \begin{align}\nonumber
            \begin{aligned}
              &  \Big| \dbE\big[(\Psi_\th(X_\th^\e(T))-\Psi_\th(\h{X}_\th(T)))R_\th^{1}(T)
                 +\h{R}_\th(T)(\pa_x\Psi^\e_\th(T)-\pa_x\Psi_\th(\h{X}_\th(T)))X_\th^{1}(T)\\
              &  +(\pa_y\L_\th^\e(0)-\pa_y\L_\th(\h{Y}_\th(0)))Y_\th^{1}(0)\big]\Big|\\
                 \leq& L\e  \dbE\[ (1+|X_\th^\e(T)|+|\h{X}_\th(T)|)|X^{1}_\th(T)+\d^\e  X_\th(T)||R^1_\th(T)|\\
              &  +|X^{1}_\th(T)+\d^\e  X_\th(T)||\h{R}_\th(T)||X^{1}_\th(T)|
                  +|Y^{1}_\th(0)+\d^\e  Y_\th(0)||Y^{1}_\th(0)|\],
            \end{aligned}
            \end{align}
            which combining  \autoref{le 4.1}, \autoref{le 5.1}, \autoref{le 5.2}, and (\ref{4.7-2}), (\ref{4.12-1}),  (\ref{4.21-1}) yields
            \begin{align}\label{4.28}
               \begin{aligned}
               & \lim_{\e\ra0}\sup_{\th\in\Th}\dbE\[(\Psi_\th(X_\th^\e(T))-\Psi_\th(\h{X}_\th(T)))R_\th^{1}(T)
                 +\h{R}_\th(T)(\pa_x\Psi^\e_\th(T)-\pa_x\Psi_\th(\h{X}_\th(T)))X_\th^{1}(T)\\
               &  \qq\qq\q  +(\pa_y\L_\th^\e(0)-\pa_y\L_\th(\h{Y}_\th(0)))Y_\th^{1}(0)\]=0.
               \end{aligned}
            \end{align}
           Insert (\ref{4.27}) and (\ref{4.28}) into (\ref{4.25-1}) we deduce
           \begin{align}\label{4.29}
             \begin{aligned}
             & \liminf_{\e\ra0}\frac{1}{\e}\(  J(v^\e(\cd))-J(\h{v}(\cd)) \)\\
             \geq&\int_{\Th}  \dbE\[\Psi_\th(\h{X}_\th(T))R_\th^{1}(T)+\h{R}_\th(T)\pa_x\Psi_\th(\h{X}_\th(T))X^{1}_\th(T)
               +\pa_y\L_\th(\h{Y}_\th(0)) Y^{1}_\th(0) \] Q(d\th).
             \end{aligned}
           \end{align}

           \emph{Step 2.} Next, let us choose a subsequence $\varepsilon_{_M}\ra0$  such that
           $$
             \limsup_{\e\ra0}\frac{1}{\e}\(  J(v^\e(\cd))-J(\h{v}(\cd)) \)=\lim_{M\ra\infty}\frac{1}{\e_{_M}}\( J(v^{\e_{_M}}(\cd))-J(\h{v}(\cd)) \).
           $$
            For every $M\geq1$, since $\cQ^{v^{\e_{_{_M}}}}$ is nonempty, there exists a probability measure
            $Q^{{\e_{_M}}}\in \cQ^{v^{\e_{_M}}}$ such that
            \begin{align}\label{6.7-1}
              \begin{aligned}
               J(v^{\e_{_M}}(\cd)) &  =\int_{\Th}\dbE\[R^{{\e_{_M}}}_\th(T)\Psi_\th(X_\theta^{{\e_{_M}}}(T))
                                 +\L_\th(Y^{{\e_{_M}}}_\th(0))\] Q^{{\e_{_M}}}(d\th),\\
                     J(\h{v}(\cd)) &  \geq\int_{\Th}  \dbE\[\h{R}_\th(T)\Psi_\th(\h{X}_\th(T))+\L_\th(\h{Y}_\th(0))\] Q^{{\e_{_M}}}(d\th).
               \end{aligned}
            \end{align}
            Similar to (\ref{4.25-1}), one gets
             \begin{equation*}
             \begin{aligned}
               & \frac{1}{\e_{_M}}\( J(v^{\e_{_M}}(\cd))-J(\h{v}(\cd)) \)\\
                \leq &\frac{1}{\e_{_M}}\int_{\Th}  \dbE[R^{{\e_{_M}}}_\th(T)\Psi_\th(X_\th^{{\e_{_M}}}(T))-\h{R}_\th(T)\Psi_\th(\h{X}_\th(T))
                 +\L_\th(Y^{{\e_{_M}}}_\th(0))-\L_\th(\h{Y}_\th(0))] Q^{{\e_{_M}}}(d\th)\\
              =&\int_{\Th}  \dbE\big[\Psi_\th(X_\th^{\e_{_M}}(T))\d^{\e_{_M}} R_\th(T) +\h{R}_\th(T)\pa_x\Psi^{\e_{_M}}_\th(T)\d^{\e_{_M}}
                    X_\th(T)+\pa_y\L_\th^{\e_{_M}}(0)\d^{\e_{_M}} Y_\th(0)\big] Q^{{\e_{_M}}}(d\th)\\
              & \q +\int_{\Th}  \dbE\big[\Psi_\th(\h{X}_\th(T))R_\th^{1}(T)+\h{R}_\th(T)\pa_x\Psi_\th(\h{X}_\th(T))X_\th^{1}(T)
                  +\pa_y\L_\th(\h{Y}_\th(0))Y_\th^{1}(0)\big] Q^{{\e_{_M}}}(d\th)\\
              & \q +\int_{\Th}  \dbE\big[(\Psi_\th(X_\th^{\e_{_M}}(T))-\Psi_\th(\h{X}_\th(T)))R_\theta^{1}(T)
                +(\pa_x\Psi^{\e_{_M}}_\th(T)-\pa_x\Psi_\th(\h{X}_\th(T)))\h{R}_\th(T)X_\th^{1}(T)\\
              & \q+(\pa_y\L_\th^{\e_{_M}}(0)-\pa_y\L_\th(\h{Y}_\th(0)))Y_\th^{1}(0)\big] Q^{{\e_{_M}}}(d\th),
             \end{aligned}
             \end{equation*}
             where  $\pa_x\Psi^{\e_{_M}}_\th(T), \pa_y\L_\th^{\e_{_M}}(0)$ are given in (\ref{6.21}).
             We analyze the above terms one by one.  In analogous to (\ref{4.27}), we arrive at
             \begin{align}\nonumber
               \begin{aligned}
                & \lim_{M\ra\infty}\sup_{\th\in \Th}\dbE[\Psi_\th(X_\th^{\e_{_M}}(T))\d^{\e_{_M}} R_\th(T) +\h{R}_\th(T)\pa_x\Psi^{\e_{_M}}_\th(T)\d^{\e_{_M}}
                    X_\th(T)+\pa_y\L_\th^{\e_{_M}}(0)\d^{\e_{_M}} Y_\th(0)]=0.
               \end{aligned}
             \end{align}
             Since $\cQ$ is weakly compact, there exists a probability measure $Q^{\#}\in\cQ$ such that a subsequence of $(Q^{\e_{_M}})_{M\geq1}$,  still denoted by $(Q^{\e_{_M}})_{M\geq1}$, converges to $Q^{\#}$. Define
             \begin{equation}\label{4.9999}
             H(\th):=\dbE[\Psi_\th(\h{X}_\th(T))R_\th^{1}(T)+\h{R}_\th(T)\pa_x\Psi_\th(\h{X}_\th(T))X_\th^{1}(T)
             +\pa_y\L_\th(\h{Y}_\th(0))Y_\th^{1}(0)].
             \end{equation}
             Since  $|\Psi_\th(\h{X}_\th(T))|\leq L(1+|\h{X}_\th(T)|^2), |\pa_x\Psi_\th(\h{X}_\th(T))|\leq L(1+|\h{X}_\th(T)|)$, from  \autoref{le 4.1}, (\ref{4.7-2}), (\ref{4.12-1}), (\ref{4.21-1}), the map $\th\mapsto H(\th)$ is bounded. In addition, according to  \autoref{assumption4}, (\ref{4.12-1}) and \autoref{le 6.3-0},    the map $\th\mapsto H(\th)$ is continuous. Hence, we have
              \begin{equation*}
              \begin{aligned}
               &   \lim_{M\ra\infty}\int_{\Th}  \dbE\[\Psi_\th(\h{X}_\th(T))R_\th^{1}(T)+\h{R}_\th(T)\pa_x\Psi_\th(\h{X}_\th(T))X_\th^{1}(T)
                  +\pa_y\L_\th(\h{Y}_\th(0))Y_\th^{1}(0)\] Q^{{\e_{_M}}}(d\th)\\
                 =&\int_{\Th}\dbE\[\Psi_\th(\h{X}_\th(T))R_\th^{1}(T)+\h{R}_\th(T)\pa_x\Psi_\th(\h{X}_\th(T))X_\th^{1}(T)
                  +\pa_y\L_\th(\h{Y}_\th(0))Y_\th^{1}(0)\] Q^{\#}(d\theta).
               \end{aligned}
              \end{equation*}
             Finally, it follows from  \autoref{assumption4}, \autoref{le 5.1} and \autoref{le 5.2},  (\ref{4.7-2}), (\ref{4.12-1}), (\ref{4.21-1})
             that
            $$
            \begin{aligned}
              &   \lim _{M\rightarrow\infty}   \Big|\int_{\Th}  \dbE\[(\Psi_\th(X_\th^{\e_{_M}}(T))-\Psi_\th(\h{X}_\th(T)))R_\theta^{1}(T)
                 +(\pa_x\Psi^{\e_{_M}}_\th(T)-\pa_x\Psi_\th(\h{X}_\th(T)))\h{R}_\th(T)X_\th^{1}(T)\\
              & \qq+(\pa_y\L_\th^{\e_{_M}}(0)-\pa_y\L_\th(\h{Y}_\th(0)))Y_\th^{1}(0)\] Q^{{\e_{_M}}}(d\th)\Big|\\
                \leq& L\lim_{M\ra\infty}  \e_{_M} \int_{\Th}  \dbE\[
                  (1+|X_\th^{\e_{_M}}(T)|+|\h{X}_\th(T)|)|X^{1}_\th(T)+\d^{\e_{_M}} X_\th(T)| |R^{1}_\th(T)| \\
              & \q     + |X^{1}_\th(T)+\d^{\e_{_M}} X_\th(T)| |\h{R}_\th(T)||X^{1}_\th(T)|
              +|Y^{1}_\th(0)+\d^{\e_{_M}}   Y_\th(0)||Y^{1}_\th(0)|\]Q^{{\e_{_M}}}(d\th)=0.
              \end{aligned}
            $$
            Consequently,  we have
             \begin{equation*}
               \begin{aligned}
                & \limsup_{\e\ra0}\frac{1}{\e}\(  J(v^\e(\cd))-J(\h{v}(\cd)) \)=\lim_{M\ra\infty}\frac{1}{\e_{_M}}\( J(v^{\e_{_M}}(\cd))-J(\h{v}(\cd))\)\\
                \leq&\int_{\Th}\dbE\[\Psi_\th(\h{X}_\th(T))R_\th^{1}(T)+\h{R}_\th(T)\pa_x\Psi_\th(\h{X}_\th(T))X_\th^{1}(T)
                  +\pa_y\L_\th(\h{Y}_\th(0))Y_\th^{1}(0)\] Q^{\#}(d\th).
               \end{aligned}
             \end{equation*}

              \emph{Step 3.}   We claim that $Q^{\#}\in \cQ^{\h{v}}$.  Indeed, on the one hand, from the definition of $J(v(\cd))$ and (\ref{6.7-1}),
              \begin{equation*}
                 \begin{aligned}
              &   |J(v^{\e_{_M}}(\cd))-J(\h{v}(\cd))|=J(v^{\e_{_M}}(\cd))-J(\h{v}(\cd))\\
                 \leq& \int_{\Th}\dbE\[|R^{\e_{_M}}_\th(T)\Psi_\th(X_\th^{\e_{_M}}(T))-\h{R}_\th(T)\Psi_\th(\h{X}_\th(T))|
                  +|\L_\th(Y^{\e_{_M}}_\th(0))-\L_\th(\h{Y}_\th(0))|\]Q^{\e_{_M}}(d\th)\\
                  \leq &\sup_{\th\in \Th}\dbE\[|R^{\e_{_M}}_\th(T)\Psi_\th(X_\th^{\e_{_M}}(T))-\h{R}_\th(T)\Psi_\th(\h{X}_\th(T))|
                  +|\L_\th(Y^{\e_{_M}}_\th(0))-\L_\th(\h{Y}_\th(0))|\]   \\
                 \leq& L\e_{_M}\sup_{\th\in \Th}\dbE\bigg[|R^{1}(T)+\d^{\e_M} R_\th(T)||\Psi_\th(X_\th^{\e_{_M}}(T))|+|Y^{1}_\th(0)+\d^{\e_M} Y_\th(0)|\\
              &\hskip 1.8cm+(1+|X_\th^{\e_{_M}}(T)|+|\h{X}_\th(T)|)|X^{1}_\th(T)+\d^{\e_M} X_\th(T)|  |\h{R}_\th(T)|\bigg],
              \end{aligned}
              \end{equation*}
              which and the assumption $|\Psi_\th(X_\th^{\e_{_M}}(T))|\leq L(1+|X_\th^{\e_{_M}}(T)|^2)$,  \autoref{le 5.1}, \autoref{le 5.2},  (\ref{4.7-2}), (\ref{4.12-1}) and (\ref{4.21-1})  allow to show
              \begin{equation}\label{4.34111}
               \begin{aligned}
               \lim_{M\ra\infty}|J(v^{\e_{_M}}(\cd))-J(\h{v}(\cd))|=0.
                \end{aligned}
              \end{equation}
              On the other hand,
              \begin{equation}\label{4.35112}
                \begin{aligned}
                & \lim_{M\ra\infty}\int_{\Th}  \Big|\dbE[R^{{\e_{_M}}}_\th(T)\Psi_\th(X_\th^{{\e_{_M}}}(T))+\L_\th(Y^{{\e_{_M}}}_\th(0))
                  -(\h{R}_\th(T)\Psi_\th(\h{X}_\th(T))+\L_\th(\h{Y}_\th(0)))]\Big| Q^{{\e_{_M}}}(d\th)\\
                \leq& L\lim_{M\ra\infty}\e_{_M} \int_{\Th}  \dbE\[ |R^{1}(T)+\d^\e R_\th(T)||\Psi_\th(X_\th^{\e_{_M}}(T))|
                 +|Y^{1}_\th(0)+\d^\e Y_\th(0)|
               \\
               &\hskip 2cm  +(1+|X_\th^{\e_{_M}}(T)|+|\h{X}_\th(T)|)|X^{1}_\theta(T)+\d^\e X_\th(T)|  |\h{R}_\th(T)| \]Q^{{\e_{_M}}}(d\th)=0.
              \end{aligned}
              \end{equation}
               Hence,  we can derive from (\ref{4.34111}) and (\ref{4.35112}), the boundness and continuity of the map
               $\th\mapsto\dbE[\h{R}_\th(T)\Psi_\th(\h{X}_\th(T))+\L_\th(\h{Y}_\th(0))]$ that
               \begin{align}\nonumber
                 \begin{aligned}
                  J(\h{v}(\cd))&=\lim_{M\ra\infty}J(v^{\e_{_M}}(\cd))=\lim_{M\ra\infty}\int_{\Th}\dbE[R^{{\e_{_M}}}_\th(T)\Psi_\th(X_\th^{{\e_{_M}}}(T))
                   +\L_\th(Y^{{\e_{_M}}}_\th(0))] Q^{{\e_{_M}}}(d\th) \\
                 & =\lim_{M\ra\infty}\int_{\Th}\dbE[\h{R}_\th(T)\Psi_\th(\h{X}_\th(T))+\L_\th(\h{Y}_\th(0))] Q^{{\e_{_M}}}(d\th)\\
                 &  =\lim_{M\ra\infty}\int_{\Th}\dbE[\h{R}_\th(T)\Psi_\th(\h{X}_\th(T))+\L_\th(\h{Y}_\th(0))] Q^{\#}(d\th),
                \end{aligned}
               \end{align}
               which means $Q^{\#}\in \cQ^{\h{v}}$.

              \emph{Step 4.}   Taking  $Q=Q^{\#}$ in  (\ref{4.29}), it follows
              \begin{equation*}
                \begin{aligned}
                & \lim_{\e\ra0}\frac{1}{\e}\Big( J(v^{\e_{_M}}(\cd))-J(\h{v}(\cd)) \Big)\\
                =&\int_{\Th}\dbE\[\Psi_\th(\h{X}_\th(T))R_\th^{1}(T)+\h{R}_\th(T)\pa_x\Psi_\th(\h{X}_\th(T))X_\th^{1}(T)
                  +\pa_y\L_\th(\h{Y}_\th(0))Y_\th^{1}(0)\] Q^{\#}(d\theta)\\
                =&\sup_{Q\in \cQ^{\h{v}}}\int_{\Th}\dbE\[\Psi_\th(\h{X}_\th(T))R_\th^{1}(T)+\h{R}_\th(T)\pa_x\Psi_\th(\h{X}_\th(T))X_\th^{1}(T)
                  +\pa_y\L_\th(\h{Y}_\th(0))Y_\th^{1}(0)\]Q(d\theta).
                 \end{aligned}
              \end{equation*}
              \end{proof}

               The above variational inequality implies  the following result.
\begin{theorem}\label{th 5.4}\rm
              Let   \autoref{assumption3}  and  \autoref{assumption4}  be in force, then there exists some probability $\overline{Q} \in \cQ^{\h{v}}$ such that, for all
              $v(\cd)\in \cV_{ad}$,
              \begin{equation*}
              \begin{aligned}
             \int_\Th \dbE\[\Psi_\th(\h{X}_\th(T))R_\th^{1;v}(T)+\h{R}_\th(T)\pa_x\Psi_\th(\h{X}_\th(T))X_\th^{1;v}(T)
                  +\pa_y\L_\th(\h{Y}_\th(0))Y_\th^{1;v}(0)\]\cl{Q}(d\theta)\geq0,
              \end{aligned}
              \end{equation*}
              where $ X_\th^{1;v}(\cd),R^{1;v}_\theta(\cd)$ and  $(Y_\theta^{1;v}(\cd),Z_\theta^{1;v}(\cd),\bar{Z}_\theta^{1;v}(\cd),K_\theta^{1;v}(\cd))$ denote the solutions to the  variational SDE (\ref{11.2}), the  variational observation process (\ref{11.1})
              and the variational BSDE (\ref{11.3}), respectively.
\end{theorem}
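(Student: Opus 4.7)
The plan is to extract a single $\overline{Q}$ satisfying the variational inequality uniformly in $v$ by a minimax interchange of $\sup_Q$ and $\inf_v$ in \autoref{le 5.3}. Introduce the functional
\[
F(v,Q)\deq \int_\Th \dbE\Big[\Psi_\th(\h{X}_\th(T))R_\th^{1;v}(T)+\h{R}_\th(T)\pa_x\Psi_\th(\h{X}_\th(T))X_\th^{1;v}(T)+\pa_y\L_\th(\h{Y}_\th(0)) Y_\th^{1;v}(0)\Big]Q(d\th).
\]
Linearity in $Q$ is built in. Linearity in $v-\h v$ follows from the structure of \rf{11.1}--\rf{11.3}: all coefficients in front of the unknowns and of $v-\h v$ are $v$-independent, and the equations have zero initial/terminal data, so $v\mapsto (R^{1;v}_\th,X^{1;v}_\th,Y^{1;v}_\th)$ is linear in $v-\h v$, and in particular $F(\h v,Q)=0$.

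Next I will verify that $\cQ^{\h v}$ is nonempty (already recorded before \autoref{le 5.3}), convex (from linearity of the defining constraint), and weakly closed, hence weakly compact inside $\cQ$. The last point reduces to showing $\th\mapsto \dbE[\h R_\th(T)\Psi_\th(\h X_\th(T))+\L_\th(\h Y_\th(0))]$ is bounded and continuous, which I will obtain from \autoref{le 4.1}, \autoref{le 4.2}, \autoref{le 5.3-0}, the bound \rf{4.7-2}, and \autoref{assumption4}. By the same reasoning, combined with \autoref{le 6.3-0} and the variational estimates \rf{4.12-1} and \rf{4.21-1}, the $\th$-integrand defining $F(v,Q)$ is likewise bounded and continuous in $\th$, so $Q\mapsto F(v,Q)$ is weakly continuous on $\cQ^{\h v}$ for every fixed $v$.

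With these ingredients I invoke the Fan--Sion minimax theorem on $\cQ^{\h v}\ts \cV_{ad}$: $F$ is weakly continuous and concave (in fact linear) in $Q$ on the weakly compact convex set $\cQ^{\h v}$, and convex (in fact affine) in $v$ on the convex set $\cV_{ad}$. The theorem then yields
\[
\sup_{Q\in \cQ^{\h v}}\inf_{v\in \cV_{ad}} F(v,Q)\,=\,\inf_{v\in \cV_{ad}}\sup_{Q\in \cQ^{\h v}} F(v,Q)\,\ges\, 0,
\]
where the last inequality is \autoref{le 5.3} applied pointwise in $v$. Since $Q\mapsto \inf_v F(v,Q)$ is an infimum of weakly continuous functions, it is upper semicontinuous on the weakly compact set $\cQ^{\h v}$ and therefore attains its supremum at some $\overline{Q}\in \cQ^{\h v}$. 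This $\overline{Q}$ then satisfies $F(v,\overline{Q})\ges 0$ for every $v\in \cV_{ad}$, which is precisely the asserted inequality.

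The main obstacle is not the minimax step itself, which becomes routine once the hypotheses are in hand, but rather the careful verification that $v\mapsto (R^{1;v}_\th,X^{1;v}_\th,Y^{1;v}_\th)$ is genuinely linear (a direct but bookkeeping-heavy check against \rf{11.1}--\rf{11.3}) and that the relevant $\th$-integrands are bounded and weakly continuous --- these are what underwrite both the weak compactness of $\cQ^{\h v}$ and the weak continuity of $Q\mapsto F(v,Q)$, without which the minimax theorem would not apply.
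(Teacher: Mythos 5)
Your proposal is correct and follows essentially the same route as the paper: both define the bilinear functional $S(\th,v)$ (your $F$), use affinity of $v\mapsto(R^{1;v}_\th,X^{1;v}_\th,Y^{1;v}_\th)$ and continuity estimates to apply Sion's minimax theorem on $\cQ^{\h v}\ts\cV_{ad}$, and then extract $\overline{Q}$ from the weak compactness of $\cQ^{\h v}$. The only cosmetic difference is at the final step, where you invoke upper semicontinuity of $Q\mapsto\inf_vF(v,Q)$ to attain the supremum directly, while the paper reaches the same $\overline{Q}$ as a weak limit of an $\e$-maximizing sequence $Q^{\d_M}$.
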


                \begin{proof}
                Define
               \begin{equation*}
                S(\theta,v):=\dbE\[\Psi_\th(\h{X}_\th(T))R_\th^{1;v}(T)+\h{R}_\th(T)\pa_x\Psi_\th(\h{X}_\th(T))X_\th^{1;v}(T)
                             +\pa_y\L_\th(\h{Y}_\th(0))Y_\th^{1;v}(0)\].
                \end{equation*}
               From \autoref{le 5.3}, $\lim\limits_{\e\ra0}\frac{1}{\e}\(  J(v^\e(\cd))-J(\h{v}(\cd))\) =\sup\limits_{Q\in \cQ^{\h{v}}} \int_\Th S(\th,v)Q(d\th)\geq0.$ Consequently, we  obtain
              \begin{equation*}
                \begin{aligned}
                 \inf_{v(\cd)\in \cV_{ad}}\sup_{Q\in \cQ^{\h{v}}}\int_\Th S(\th,v)Q(d\th)\geq0.
                \end{aligned}
              \end{equation*}
              Next, we are ready to use Sion's theorem. To this end, we need to  show that $S(\theta,v)$ is convex and continuous with respect to $v$. In fact, on the one hand, from the fact that,  for $0\leq\rho\leq1$, $v(\cd),\tilde{v}(\cd)\in \mathcal{V}_{ad}$,
              $$
               \begin{aligned}
                & R^{1;\rho v+(1-\rho)\ti{v}}_\th(T)=\rho R^{1; v}_\th(T)+(1-\rho)R^{1;  \ti{v}}_\th(T),\\
                & X^{1;\rho v+(1-\rho)\ti{v}}_\th(T)=\rho X^{1; v}_\th(T)+(1-\rho)X^{1;  \ti{v}}_\th(T),\\
                & Y^{1;\rho v+(1-\rho)\ti{v}}_\th(0)=\rho Y^{1; v}_\theta(0)+(1-\rho)Y^{1;  \tilde{v}}_\th(0),\\
              \end{aligned}
              $$
             it yields, for  all  $\th\in \Th$, $S(\th,\rho v+(1-\rho)\ti{v})=\rho S(\th, v)+(1-\rho)S(\th, \ti{v})$.
               On the other hand,  we have from \autoref{th 3.2} that for $v(\cd),\tilde{v}(\cd)\in \mathcal{V}_{ad}$,
               $$
                \begin{aligned}
                 & \dbE\[|R^{1;v}_\th(T)-R^{1;\ti{v}}_\th(T)|^2\]\leq C \dbE\[\int_0^T|v(t)-\ti{v}(t)|^4dt   \]^\frac{1}{2},\\
                 & \dbE\[|X^{1;v}_\th(T)-X^{1;\ti{v}}_\th(T)|^4\]\leq C\dbE\[\int_0^T|v(t)-\ti{v}(t)|^4dt   \],\\
                \end{aligned}
                $$
               and from \autoref{th 6.1} that
               $$
               \begin{aligned}
                 &{|Y^{1;v}_\th(0)-Y^{1;\ti{v}}_\th(0)|^2}\leq C \dbE\[\int_0^T|v(t)-\ti{v}(t)|^4dt\]^\frac{1}{2},
                \end{aligned}
               $$
               where  the constant $C>0$ depends on $L, T,\sum\limits_{i,j=1,i\neq j}^I\l_{ij}$. Hence, $S(\theta,v)$ is continuous in $v$, for each $\theta$. Applying {Sion's theorem  (see \cite{Sion-1958})}, we derive
               \begin{equation*}
                \begin{aligned}
                  \sup_{Q\in \cQ^{\h{v}}}\inf_{v(\cd)\in \cV_{ad}}\int_\Th S(\th,v)Q(d\th)
                   =\inf_{v(\cd)\in \cV_{ad}}\sup_{Q\in \cQ^{\h{v}}}\int_\Th S(\th,v)Q(d\th)\geq0.
                  \end{aligned}
               \end{equation*}
               Then, for arbitrary $\d>0$, there exists a probability $Q^\d\in \cQ^{\h{v}}$ such that
               $$
                 \inf_{v(\cd)\in \cV_{ad}}\int_\Th S(\th,v)Q^\d(d\th) \geq-\d.
               $$
                From the compactness of $\cQ^{\h{v}}$, there exists a subsequence   $\d_M\rightarrow0$ such that $Q^{\d_M}$ converges weakly to a probability $\overline{Q}\in\cQ^{\h{v}}$. Then, one has,
                $$\int_\Th S(\th,v)\cl{Q}(d\th)=\lim_{\d_M\ra0}\int_\Th S(\th,v)Q^{\d_M}(d\th)\geq0,\q \forall v(\cd)\in \cV_{ad}.$$
                This completes the proof.
              \end{proof}

\subsection{Adjoint Equations and SMP}

             For simplicity, we denote $\cl W_\th(\cd)=\cl W_\th^{\h{v}}(\cd),$  for each $\th\in \Th$. Let us introduce the following adjoint equations:
             \begin{equation}\label{4.36}
               \left\{  \begin{aligned}
                 dp^A_\th(t) & =q^A_\th(t) dW(t)+\bar{q}^A_\th(t) d\cl{W}_\th(t)+k^A_\th(t)\bullet dM(t),\ t\in[0,T],\\
               p^A_\th(T)    & =\Psi_\th(\h{X}_\th(T)),
             \end{aligned}  \right.
             \end{equation}
             and
             \begin{equation}\label{4.37}
             \left\{   \begin{aligned}
               dp^F_\th(t) & =\pa_yf_\th(t)p^F_\th(t)dt+\pa_zf_\th(t)p^F_\th(t)dW(t)+[\pa_{\bar{z}}f_\th(t)-h_\th(t)]p^F_\th(t)d\cl{W}_\th(t)\\
                           &\q + \pa_kf_\th(t)p^F_\th(t)\bullet dM(t),\ t\in[0,T],\\
                dp^B_\th(t)& =-\bigg\{[\pa_xb_\th(t)-\bar{\si}_\th(t)\pa_xh_\th(t)]p^B_\th(t)
                              +\pa_x\si_\th(t)q^B_\th(t)+\pa_x\bar{\si}_\th(t)\bar{q}^B_\th(t)\\
                          &  \qq\q+\sum_{i,j=1,i\neq j}^I\pa_x\b_{\th,ij}(t)k^B_{ij}(t)\l_{ij}\mathbf{1}_{\{\a(t-)=i\}}
                             +\pa_xh_\th(t)\bar{q}^A_\th(t)-\pa_xf_\th(t)p^F_\th(t)\bigg\}dt\\
                          &  \qq\q+q^B_\th(t)dW(t)+\bar{q}^B_\th(t)d\cl{W}_\th(t)+k^B_\th(t)\bullet dM(t),\ t\in[0,T],\\
                p^F_\th(0)&  =-\pa_x\L_\th(\h Y_\th(0)),\q p^B_\th(T)=\pa_x\Psi_\th(\h{X}_\th(T))-\pa_x\Phi_\th(\h{X}_\th(T))p^F_\th(T).
             \end{aligned}  \right.
             \end{equation}
             According to  \autoref{th 3.2} and \autoref{th 6.1},   there exists a constant $C>0$ such that
             \begin{equation}\label{6.44}
             \begin{aligned}
             &  \mathrm{(i)}\ \dbE\[\sup_{t\in[0,T]}|p^F_\th(t)|^4\]\leq C,\\
             &  \mathrm{(ii)}\ \dbE\[\sup_{t\in[0,T]}|p^A_\th(t)|^2+\int_0^T(|q^A_\th(t)|^2+|\bar{q}^A_\th(t)|^2
                +\sum_{i,j=1,i\neq j}^I|k^A_{\th,ij}(t)|^2\l_{ij}\mathbf{1}_{\{\a(t-)=i\}})dt\]\\
             & \qq \leq  C\(|x|^4+\dbE\int_0^T|\h v(t)|^4dt\),\\
             &  \mathrm{(iii)}\  \dbE\[\sup_{t\in[0,T]}|p^B_\th(t)|^2+\int_0^T(|q^B_\th(t)|^2+|\bar{q}^B_\th(t)|^2
                  +\sum_{i,j=1,i\neq j}^I|k^B_{\th,ij}(t)|^2\l_{ij}\mathbf{1}_{\{\a(t-)=i\}})dt\]\\
             &\qq \leq C\(|x|^4+\dbE\int_0^T|\h v(t)|^4dt\).
             \end{aligned}
             \end{equation}

             Besides, using the method in \autoref{le 6.3-0}, we can show that
             $p^F_\th(\cd)$, $ (p^A_\th(\cd),q^A_\th(\cd),\bar{q}^A_\th(\cd),\b^A_{\th}(\cd))$, and
       $      (p^B_\th(\cd),q^B_\th(\cd),\bar{q}^B_\th(\cd),\b^B_{\th}(\cd))$ are continuous with respect to  $\th$.
             \begin{lemma}\label{le 6.6}\rm
             Under   \autoref{assumption3}  and  \autoref{assumption4}, we have
            \begin{equation*}
            \begin{aligned}
            &  \lim_{\d\ra 0}\sup_{d(\th, \bar{\th})\leq\d }\dbE\bigg[\sup_{t\in[0,T]}\(|p^F_\th(t)-p^F_{\bar{\th}}(t)|^4
                    +|p^A_\th(t)-p^A_{\bar{\th}}(t)|^2+|p^B_\th(t)-p^B_{\bar{\th}}(t)|^2\)\\
            &  +\int_0^T(|q^A_\th(t)-q^A_{\bar{\th}}(t)|^2+|\bar{q}^A_\th(t)-\bar{q}^A_{\bar{\th}}(t)|^2
                   +\sum_{i,j=1,i\neq j}^I|\b^A_{\th,ij}(t)-\b^A_{\bar{\th},ij}(t)|^2\l_{ij}\mathbf{1}_{\{\a(t-)=i\}})dt \\
            & +\int_0^T(|q^B_\th(t)-q^B_{\bar{\th}}(t)|^2+|\bar{q}^B_\th(t)-\bar{q}^B_{\bar{\th}}(t)|^2
                   +\sum_{i,j=1,i\neq j}^I|\b^B_{\th,ij}(t)-\b^B_{\bar{\th},ij}(t)|^2\l_{ij}\mathbf{1}_{\{\a(t-)=i\}})dt\bigg]=0.
            \end{aligned}
            \end{equation*}
           \end{lemma}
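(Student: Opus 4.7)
The plan is to mimic the three-step methodology used in the proof of \autoref{le 6.3-0}. As a preliminary reduction, I rewrite the adjoint systems (\ref{4.36})--(\ref{4.37}) with $dG$ in place of $d\cl W_\th$ via $d\cl W_\th(t)=-h_\th(t,\h X_\th(t),\a(t-))dt+dG(t)$. After this substitution every $\th$-dependence sits in the coefficients and the martingale parts are driven by the common triple $(W,G,M)$, so differences can be formed directly under the fixed probability $\dbP$ and the estimates of \autoref{th 3.2}--\autoref{th 6.1} are applicable.

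Step 1 ($p^F$-estimate). Subtracting the SDEs for $p^F_\th$ and $p^F_{\bar\th}$ produces a linear SDE in $p^F_\th-p^F_{\bar\th}$ with bounded linear coefficients (the partials of $f$ in $y,z,\bar z,k$ and the function $h$ are bounded by Assumption~\ref{assumption3}(i)), plus a source term collecting $[\pa_y f_\th(t)-\pa_y f_{\bar\th}(t)]p^F_{\bar\th}(t)$ and analogous contributions for $\pa_z f,\pa_{\bar z} f,\pa_k f,h$. Each coefficient difference decomposes through Assumption~\ref{assumption3}(iii)--(iv) into a parameter piece bounded by $Ld(\th,\bar\th)$ and a state-argument piece controlled via \autoref{le 4.2}. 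Combined with the $L^4$ bound (\ref{6.44})(i) on $p^F_{\bar\th}$, \autoref{th 3.2} yields the vanishing of $\dbE[\sup_t|p^F_\th(t)-p^F_{\bar\th}(t)|^4]$ as $d(\th,\bar\th)\to0$.

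Step 2 ($p^A$-estimate). Rewritten via $dG$, equation (\ref{4.36}) has generator $-\bar q^A_\th h_\th$. The difference BSDE for $(p^A_\th-p^A_{\bar\th},q^A_\th-q^A_{\bar\th},\bar q^A_\th-\bar q^A_{\bar\th},k^A_\th-k^A_{\bar\th})$ has a bounded linear coefficient on $\bar q^A$, terminal datum $\Psi_\th(\h X_\th(T))-\Psi_{\bar\th}(\h X_{\bar\th}(T))$ handled by Assumption~\ref{assumption4}(ii)--(iii) and \autoref{le 4.2}, and a source $\bar q^A_{\bar\th}[h_\th-h_{\bar\th}]$ estimated through Assumption~\ref{assumption3}(iv) and the $L^2$ bound (\ref{6.44})(ii); \autoref{th 6.1} delivers the required conclusion. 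Step 3 ($p^B$-estimate) is the substantive one: the linear part in $(p^B,q^B,\bar q^B,k^B)$ has bounded coefficients $\pa_x b_\th-\bar\si_\th\pa_x h_\th$, $\pa_x\si_\th$, $\pa_x\bar\si_\th$, $\pa_x\b_\th$ under Assumption~\ref{assumption3}(i)--(ii), so \autoref{th 6.1} applies as soon as the source terms are controlled. The inhomogeneous part contains products such as $\pa_x h_\th\bar q^A_\th$ and $\pa_x f_\th\,p^F_\th$; I split each by the standard add--subtract trick into a parameter-difference piece (bounded by $Ld(\th,\bar\th)$ times the other factor), a state-argument piece (via Assumption~\ref{assumption3}(iii) and \autoref{le 4.2}) and an inherited-factor piece handled by Step~1 (for the $p^F$-factor) or Step~2 (for the $\bar q^A$-factor). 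The terminal datum $\pa_x\Psi_\th(\h X_\th(T))-\pa_x\Phi_\th(\h X_\th(T))p^F_\th(T)$ is treated analogously using Assumption~\ref{assumption3}(iv), Assumption~\ref{assumption4}(iv), \autoref{le 4.1} and Step~1.

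The main obstacle is retaining integrability in Step 3: the factors $\pa_x f_\th$ and $\pa_x\Phi_\th$ grow like $1+|\h X_\th|+|\h v|$, whereas $p^F_\th$ is known only to lie in $L^4$ and $\bar q^A_\th$ only in $L^2$. Closing the $L^2$ estimate for the $p^B$-differences therefore requires careful H\"older splittings that distribute the state-argument differences (quantified by \autoref{le 4.2} in $L^4$) against the fourth-moment bounds in \autoref{le 4.1} and (\ref{6.44})(i), all uniformly in $\th$, so that the resulting estimate depends only on $L,T,\sum_{i\neq j}\l_{ij}$ and tends to zero with $\d$.
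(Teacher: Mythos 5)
Your proposal is correct and takes essentially the same route as the paper: the paper gives no written proof of this lemma, stating only that it follows ``using the method in \autoref{le 6.3-0}'', and your three-step scheme (difference equations with bounded linear parts, add--subtract splitting of each coefficient difference into a $Ld(\th,\bar\th)$ parameter piece and a state-argument piece controlled by \autoref{le 4.2}, then \autoref{th 3.2} and \autoref{th 6.1}) is precisely that method. Your additional remarks on rewriting $d\cl W_\th$ via $dG$ and on the H\"older splittings needed to handle the linearly growing factors $\pa_x f_\th$, $\pa_x\Phi_\th$ against the $L^4$ bound on $p^F_\th$ supply detail the paper omits.
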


             Define the Hamiltonian as follows: for $i,j,i_0\in \cI$ and  for $t\in[0,T]$, $x\in \dbR$, $y,k_{ij}\in \dbR,$ $z,\bar{z}\in \dbR$,
             $p^F\in \dbR,$  $p^B\in \dbR,$  $q^B,\bar{q}^B\in \dbR,$ $k^B_{ij},$ $\bar{q}^A\in \dbR$,\ $v\in V$,
             \begin{equation}\label{4.38}
               \begin{aligned}
               & H_\th(\o,t,x,y,z,\bar{z},k,v,i_0,p^F,p^B,q^B,\bar{q}^B,k^B,\bar{q}^A)\\
                = & b_\th(\o,t,x,v,i_0)p^B   +\si_\th(\o,t,x,v,i_0) q^{B}
               +\bar{\si}_\th(\o,t,x,v,i_0)\bar{q}^{B}\\
               & +\sum_{i,j=1,i\neq j}^I  \b_{\th,ij}(\o,t,x,v,i_0)k^B_{ij}  \l_{ij}\mathbf{1}_{\{\a(t-)=i\}}
                +  h_\th(\o,t,x,i_0)\bar{q}^A   \\
               & -\(f_\th(\o,t,x,y,z,\bar{z},\sum_{i,j=1,i\neq j}^Ik_{ij}\l_{ij}\mathbf{1}_{\{\a(t-)=i\}},v,i_0)-h_\th(t,x,i_0)\bar{z}\)p^F.
               \end{aligned}
             \end{equation}
             Here $k^B=(k^B_{ij})_{i,j\in \mathcal{I}}$ and $k=(k_{ij})_{i,j\in \mathcal{I}}$.

             Recall that $\h{v}(\cd)$ is an optimal control. By  $\bar{\dbP}^{\h{v}}_\th$ we denote the probability $\bar{\dbP}^{v}_\th$
             given in (\ref{4.6-111}) but with $\h{v}(\cd)$ instead of $v(\cd)$.
             Before proving the stochastic maximum principle, we first give a lemma.
             Define
             \begin{equation*}
             \begin{aligned}
            \Pi_\th(\o, t):=\pa_vH_\th \big(\o,t,\h{X}_\th(t),\h{Y}_\th(t),
                   \h{Z}_\th(t),\h{\bar{Z}}_\th(t),\h{K}_{\th}(t),\h{v}(t),\a(t-), p^F_\th(t),p^B_\th(t),q^B_\th(t),
                \cl{q}^B_\th(t),k^B_{\th}(t),\cl{q}^A_\th(t)\big).
                \end{aligned}
             \end{equation*}

             \begin{lemma}\label{le 7.1}\rm
             Under  \autoref{assumption3}  and  \autoref{assumption4}, the map $(\th,t,\o)\mapsto\dbE\[\Pi_\th(\o, t)|\cG_t\]$ is a $\dbG$-progressively measurable
             process; in other words, for each $t\in[0,T]$, the function $\dbE\[\Pi_\th(\o, t)|\cG_t\]: \Th\ts[0,t]\ts\Omega\ra\dbR$  is $\sB(\Th)\ts
             \sB([0,t])\ts\cG_t$-measurable.
             \end{lemma}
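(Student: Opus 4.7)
The plan is to construct a jointly $\sB(\Th)\otimes\sB([0,t])\otimes\cG_t$-measurable version of the conditional expectation by combining two ingredients: $L^2$-continuity in $\th$ and the classical optional projection for each fixed $\th$. The argument splits naturally into three steps: (i) $L^2$-continuity of $\Pi_\th$ in $\th$; (ii) for each $\th$, existence of a $\dbG$-progressively measurable representative of $\dbE[\Pi_\th|\cG_t]$; (iii) assembly into a jointly measurable process via a separability/selection argument on $\Th$.

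First, I will show that the map $\th\mapsto\Pi_\th(\cd,\cd)$ is continuous from $\Th$ into $L^2([0,T]\ts\Omega;dt\otimes d\dbP)$. Recall from (\ref{4.38}) that $\Pi_\th=\pa_vH_\th$ evaluated along $(\h X_\th,\h Y_\th,\h Z_\th,\h{\bar Z}_\th,\h K_\th,\h v)$ and the adjoint processes $(p^F_\th,p^B_\th,q^B_\th,\bar q^B_\th,k^B_\th,\bar q^A_\th)$; more precisely, it is a product of the derivatives $\pa_v b_\th,\pa_v\si_\th,\pa_v\bar\si_\th,\pa_v\b_{\th,ij},\pa_v f_\th$ with the adjoint components. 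By item (iii) of \autoref{assumption3} these derivatives are Lipschitz in their state arguments, and by item (iv) of \autoref{assumption3} they are Lipschitz in $\th$. Together with the continuity in $\th$ of the optimal state (\autoref{le 4.2}), of $\h R_\th$ (\autoref{le 5.3-0}), and of the adjoint processes (\autoref{le 6.6}), and with the uniform $L^p$-bounds in \autoref{le 4.1}, (\ref{4.21-1}), and (\ref{6.44}), a Cauchy--Schwarz argument yields $\dbE\int_0^T|\Pi_\th(t)-\Pi_{\bar\th}(t)|^2 dt\to 0$ as $d(\th,\bar\th)\to 0$.

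Second, for each fixed $\th\in\Th$, the process $(t,\o)\mapsto\Pi_\th(\o,t)$ is $\dbF$-progressively measurable and in $L^2([0,T]\ts\Omega)$, so the classical optional projection theorem supplies a $\dbG$-progressively measurable version $M_\th(\cd,\cd)$ of $(t,\o)\mapsto\dbE[\Pi_\th(\cd,t)|\cG_t]$. Since $\dbE[\cd\,|\cG_t]$ is a contraction on $L^2$, Step~1 transfers to $M_\th$: the map $\th\mapsto M_\th$ is continuous from $\Th$ into $L^2([0,T]\ts\Omega)$ with values in $\dbG$-progressively measurable processes.

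To upgrade the $\th$-wise measurability into joint $\sB(\Th)\otimes\cP(\dbG)$-measurability (where $\cP(\dbG)$ denotes the $\dbG$-progressive $\si$-algebra), I will exploit separability of $\Th$. Fix a countable dense sequence $\{\th_n\}_{n\geq1}\subset\Th$ and a $\dbG$-progressively measurable representative $M_{\th_n}$ for each $n$. Given $\th\in\Th$, let $n_k(\th)$ be the smallest $n\leq k$ minimizing $d(\th_n,\th)$; the map $\th\mapsto n_k(\th)$ is $\sB(\Th)$-measurable, and $\th_{n_k(\th)}\to\th$. Setting
\[
  M(\th,t,\o):=\liminf_{k\to\infty}M_{\th_{n_k(\th)}}(t,\o)
\]
defines a $\sB(\Th)\otimes\cP(\dbG)$-measurable process. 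By the $L^2$-continuity established in Step~2, $M_{\th_{n_k(\th)}}\to M_\th$ in $L^2$, so after extracting a diagonal subsequence the convergence also holds $dt\otimes d\dbP$-a.e., giving $M(\th,\cd,\cd)=M_\th(\cd,\cd)$ in $L^2$ for every $\th$. Thus $M$ is the required jointly measurable modification of $(\th,t,\o)\mapsto\dbE[\Pi_\th(\o,t)|\cG_t]$.

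The main obstacle I anticipate is the rigorous measurable-selection step: ensuring that a single $\sB(\Th)\otimes\cP(\dbG)$-measurable limit object is well-defined and agrees with $\dbE[\Pi_\th|\cG_t]$ for \emph{every} $\th\in\Th$ (not merely on a $\th$-dependent set of full measure). This is where the full norm-continuity in $\th$ from Step~1, rather than mere Borel measurability, is essential; combined with the standard diagonal extraction it furnishes the required everywhere agreement.
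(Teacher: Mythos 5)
Your overall strategy rests on the same pillar as the paper's proof: the \emph{uniform} continuity of $\th\mapsto\Pi_\th$ (obtained from \autoref{le 4.2}, \autoref{le 5.3-0}, \autoref{le 6.6} and the a priori bounds (\ref{4.7-2}), (\ref{6.44})), which lets one approximate the conditional expectation by processes built from countably many fixed-$\th$ slices, each of which is trivially jointly measurable, and then pass to the limit. The implementations differ: the paper exhausts the Polish space $\Th$ by compacts $C^M$, uses partitions of unity $\k_l$, and measures the error in $L^1(\cl{Q}(d\th)\ts dt\ts d\bar{\dbP}^{\h{v}}_\th)$; you use a nearest-point Borel selector and $L^2(dt\ts d\dbP)$ convergence for each fixed $\th$. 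Your Step 2 (optional projection for each fixed $\th$ plus the $L^2$-contractivity of conditional expectation) is a clean way to make explicit what the paper leaves implicit, and Step 1 is exactly the computation the paper carries out term by term.

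The one genuine weak point is the identification $M(\th,\cd,\cd)=M_\th(\cd,\cd)$ in Step 3. Convergence $M_{\th_{n_k(\th)}}\to M_\th$ in $L^2(dt\ts d\dbP)$ does not imply that the $\liminf$ over the \emph{full} sequence equals $M_\th$ a.e. (an $L^2$-convergent sequence can have a strictly smaller $\liminf$ on a set of positive measure), and the ``diagonal subsequence'' you invoke would a priori depend on $\th$, which destroys joint measurability of the limit object. The repair uses precisely the uniformity of the modulus from Step 1: replace your selector by $n_k(\th):=\min\{n\,:\,d(\th_n,\th)<2^{-k}\}$, which is finite by density and $\sB(\Th)$-measurable, so that $\sup_{\th\in\Th}\|M_{\th_{n_k(\th)}}-M_\th\|_{L^2(dt\ts d\dbP)}\le\omega(2^{-k})$ with $\omega$ the uniform modulus. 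One may then extract a subsequence $(k_j)_{j\ge1}$ \emph{independent of $\th$} with $\sum_j\omega(2^{-k_j})^2<\infty$; Chebyshev and Borel--Cantelli then give $M_{\th_{n_{k_j}(\th)}}\to M_\th$, $dt\ts d\dbP$-a.e., for every fixed $\th$, so the $\liminf$ along $(k_j)$ is both jointly measurable and a version of $\dbE[\Pi_\th(\cd,t)|\cG_t]$ for every $\th$. (Alternatively one may settle, as the paper's own argument implicitly does, for identification $\cl{Q}(d\th)\ts dt\ts d\dbP$-a.e., which is all the Fubini step in \autoref{th 5.1} requires.) With this adjustment your proof is complete.
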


             \begin{proof}    For convenience,  we suppress $\o$ in   $\Pi_\th(\o, t)$.
              Since $\Th$ is a Polish space, for each $M>1$, there exists  a compact subset $C^M\subset \Th$ such that
             $\cl Q(\th\notin C^M)\leq \frac{1}{M}$. Then  we can find a subsequence of open neighborhoods $\(B(\th_l, \frac{1}{2M})\)_{l=1}^{L_M}$ such that $C^M\subset \bigcup_{l=1}^{L_M}\(B(\th_l, \frac{1}{2M})\)$.
             From the  locally compact property of $\Th$ and using partitions of unity, there exists a sequence of continuous functions
             $\k_l:\Th\ra \dbR$ with values in $[0,1]$ such that
             \begin{equation*}
              \k_l(\th)=0,\ \text{if}\ \th\notin B\(\th_l, \frac{1}{2M}\),\  l=1,
              \cds, L_M,\ \text{and}\ \sum_{l=1}^{L_M}\k_l(\th)=1,\ \text{if}\ \th\in C^M.
             \end{equation*}
             We set $\th_l^*$ satisfying  $\k_l(\th_l^*)>0$ and define
             \begin{equation*}
               \Pi^M_\th(t):=\sum_{l=1}^{L_M}\Pi_{\th_l^*}(t)\kappa_l(\th)\mathbf{1}_{\{\th\in C^M\}}.
             \end{equation*}
             Notice that from (\ref{4.7-2}) and  (\ref{6.44}), one has
             \begin{equation*}
             \begin{aligned}
             &     \dbE^{\bar{\dbP}^{\h{v}}_\th}\[\int_0^T |\Pi_\th(t)|dt\]=\dbE\[\h R_{\th}(T)\int_0^T |\Pi_\th(t)|dt\]\leq L.
             \end{aligned}
             \end{equation*}
             Hence, we obtain from Bayes's formula that
             \begin{equation*}
             \begin{aligned}
             &\int_\Th\dbE^{\bar{\dbP}^{\h{v}}_\th}\[\int_0^T\Big|\dbE^{\bar{\dbP}^{\h{v}}_\th}[\Pi_\th^M(t)-\Pi_\th(t)|\cG_t ]\Big|dt\]\cl Q(d\th)\\
             \leq &\int_\Th\dbE^{\bar{\dbP}^{\h{v}}_\th}\[\int_0^T\Big|\Pi_\th^M(t)-\Pi_\th(t)\Big|dt\]\cl Q(d\th)\\
             \leq& \int_\Th \sum_{l=1}^{L_M}\dbE^{\bar{\dbP}^{\h{v}}_\th}\bigg[\int_0^T |\Pi_\th^M(t)-\Pi_\th(t)|dt\bigg]\k_l(\th) \mathbf{1}_{\{\th\in C^M\}}
                  +\dbE^{\bar{\dbP}^{\h{v}}_\th}\bigg[\int_0^T|\Pi_\th(t)|dt\bigg]\k_l(\th) \mathbf{1}_{\{\th\notin C^M\}} \cl Q(d\th)\\
              \leq&  \sup_{d(\th,\bar{\th})\leq \frac{1}{M}}\dbE^{\bar{\dbP}^{\h{v}}_\th}\bigg[\int_0^T |\Pi_{\bar{\th}}(t)-\Pi_\th(t)|dt\bigg]
             +L\cl Q(\th\notin C^M) \\
              \leq & \sup_{d(\th,\bar{\th})\leq \frac{1}{M}}\dbE\bigg[\h R_{\th}(T)\cd\int_0^T |\Pi_{\bar{\th}}(t)-\Pi_\th(t)|dt\bigg] +\frac{L}{M}.
             \end{aligned}
             \end{equation*}
             Here we use the fact  that  $\k_l(\th)=0$ whenever $d(\th,\bar{\th})\geq \frac{1}{2N}$.
             Next, let us show
             \begin{equation*}
              \lim_{M\ra\infty}  \sup_{d(\th,\bar{\th})\leq \frac{1}{M}}\dbE\bigg[\h R_{\th}(T)\cd
              \int_0^T |\Pi_{\bar{\th}}(t)-\Pi_\th(t)|dt\bigg]=0.
              \end{equation*}
             Notice that
             \begin{equation*}
             \begin{aligned}
             \Pi_\th(t) & =\pa_vb_\th(t,\h X_\th(t),\h v(t), \a(t-))p_\th^B(t)+\pa_v\si_\th(t,\h X_\th(t),\h v(t), \a(t-))q_\th^B(t)\\
                           &\q +\pa_v\bar{\si}_\th(t,\h X_\th(t),\h v(t), \a(t-))\bar{q}_\th^B(t)\\
                           & \q+\sum_{i,j=1,i\neq j}^I\pa_v\b_{\th,ij}(t,\h X_\th(t),\h v(t),
                              \a(t-))k_{\th,ij}^B(t)\l_{ij}\mathbf{1}_{\{\a(t-)=i\}}\\
                           &\q -\pa_vf(t,\h X_\th(t), \h Y_\th(t),  \h Z_\th(t), \h {\bar{Z}}_\th(t),
                           \sum_{i,j=1,i\neq j}^I \h K_{\th,ij}(t)\l_{ij}\mathbf{1}_{\{\a(t-)=i\}},\a(t-))p_\th^F(t).
             \end{aligned}
             \end{equation*}
             We just show
             \begin{equation*}
             \begin{aligned}
             & \lim_{M\ra\infty}\sup_{d(\th,\bar{\th})\leq \frac{1}{M}}\dbE\bigg[\h R_{\th}(T)\cd
              \int_0^T \sum_{i,j=1,i\neq j}^I|\pa_v\b_{\th,ij}(t,\h X_\th(t))k_{\th,ij}^B(t)\\
             &\qq\qq\qq\qq\qq  -\pa_v\b_{\bar{\th},ij}(t,\h X_{\bar{\th}}(t))k_{\bar{\th},ij}^B(t)|\l_{ij}\mathbf{1}_{\{\a(t-)=i\}}dt\bigg]=0,
             \end{aligned}
             \end{equation*}
             since the other terms can be estimated similarly. Here and after we use $\pa_v\b_{\th,ij}(t,\h X_\th(t))$ instead of
             $\pa_v\b_{\th,ij}(t,\h X_\th(t),\h v(t),\a(t-))$  for short. Since
             \begin{equation*}
             \begin{aligned}
              &  |\pa_v\b_{\th,ij}(t,\h X_\th(t))k_{\th,ij}^B(t)-\pa_v\b_{\bar{\th},ij}(t,\h X_{\bar{\th}}(t))k_{\bar{\th},ij}^B(t)|\\
               \leq& |\pa_v\b_{\th,ij}(t,\h X_\th(t))k_{\th,ij}^B(t)-\pa_v\b_{\bar{\th},ij}(t,\h X_\th(t))k_{\th,ij}^B(t)|\\
              & +|\pa_v\b_{\bar{\th},ij}(t,\h X_\th(t))k_{\th,ij}^B(t)-\pa_v\b_{\bar{\th},ij}(t,\h X_{\bar{\th}}(t))k_{\th,ij}^B(t)|\\
              & +|\pa_v\b_{\bar{\th},ij}(t,\h X_{\bar{\th}}(t))k_{\th,ij}^B(t)-\pa_v\b_{\bar{\th},ij}(t,\h X_{\bar{\th}}(t))k_{\bar{\th},ij}^B(t)|,
             \end{aligned}
             \end{equation*}
             from  \autoref{assumption3}  we have
             \begin{equation*}
             \begin{aligned}
              &  |\pa_v\b_{\th,ij}(t,\h X_\th(t))k_{\th,ij}^B(t)-\pa_v\b_{\bar{\th},ij}(t,\h X_{\bar{\th}}(t))k_{\bar{\th},ij}^B(t)|\\
                \leq & Ld(\th,\bar{\th})|k_{\th,ij}^B(t)|+L|\h X_\th(t)-\h X_{\bar{\th}}(t)||k_{\th,ij}^B(t)|
              +L|k_{\th,ij}^B(t)-k_{\bar{\th},ij}^B(t)|.
             \end{aligned}
             \end{equation*}
             Hence, according to (\ref{5.5}), (\ref{4.7-2}), (\ref{6.44}), and \autoref{le 6.6}, it follows from Cauchy-Schwarz inequality
             and H\"{o}lder inequality that
             \begin{equation*}
             \begin{aligned}
             &  \dbE\bigg[\h R_{\th}(T)\cd\int_0^T \sum_{i,j=1,i\neq j}^I|\pa_v\b_{\th,ij}(t,\h X_\th(t))k_{\th,ij}^B(t)
                 -\pa_v\b_{\bar{\th},ij}(t,\h X_{\bar{\th}}(t))k_{\bar{\th},ij}^B(t)|\l_{ij}\mathbf{1}_{\{\a(t-)=i\}}dt\bigg]\\
               \leq& L\Big(\sum_{i,j=1,i\neq j}^I\l_{ij}\Big) \cd\Bigg(  \dbE\bigg[\int_0^T \sum_{i,j=1,i\neq j}^I|k_{\th,ij}^B(t)|^2
                 \l_{ij}\mathbf{1}_{\{\a(t-)=i\}}dt   \bigg]^\frac{1}{2}\cd
                 \dbE\bigg[(\h R_{\th}(T) )^2   \bigg]^\frac{1}{2}\cd d(\th,\bar{\th})\\
             &   \q+\dbE\bigg[\int_0^T \sum_{i,j=1,i\neq j}^I|k_{\th,ij}^B(t)|^2
                 \l_{ij}\mathbf{1}_{\{\a(t-)=i\}}dt   \bigg]^\frac{1}{2}\cd  \dbE\bigg[(\h R_{\th}(T))^4   \bigg]^\frac{1}{4}\cd
             \dbE\bigg[\sup_{t\in[0,T]}|\h X_\th(t)-\h X_{\bar{\th}}(t)|^4\bigg]^\frac{1}{4}\\
             &  \q +  \dbE\bigg[\int_0^T \sum_{i,j=1,i\neq j}^I|k_{\th,ij}^B(t)-k_{\bar{\th},ij}^B(t) |^2
                 \l_{ij}\mathbf{1}_{\{\a(t-)=i\}}dt   \bigg]^\frac{1}{2}\cd
                  \dbE\bigg[(\h R_{\th}(T))^2   \bigg]^\frac{1}{2}\Bigg)\\
                 \ra &0,\ \text{as}\ d(\th,\bar{\th})\ra 0.
             \end{aligned}
             \end{equation*}
             \end{proof}

             Now, we are in the position to give the main result--stochastic maximum principle.
\begin{theorem}\label{th 5.1}\rm
              Under   \autoref{assumption3}  and  \autoref{assumption4}, let $\h{v}(\cd)$ be an optimal control and $(\h{X}_\th(\cd),\h{Y}_\th(\cd),
              \h{Z}_\th(\cd),$  $\h{\bar{Z}}_\th(\cd),\h{K}_\th(\cd))$ the solution to Eq. (\ref{4.1}) with $\h{v}(\cd)$. Then there exists a probability
              $\cl{Q}\in \cQ^{\h{v}}$ such that $dt\ts d\bar{\dbP}^{\h{v}}_\th$-a.s.,  $v\in V$,
              \begin{align}\nonumber
                \begin{aligned}
                &  \int_\Th \dbE^{\bar{\dbP}^{\h{v}}_\th}\[ \langle\pa_vH_\th(t,\h{X}_\th(t),\h{Y}_\th(t),\h{Z}_\th(t),\h{\bar{Z}}_\th(t),\h{K}_{\th},
                    \h{v}(t),\a(t-), \\
                &  \qq\qq\qq p^F_\th(t),p^B_\th(t),q^B_\th(t),\cl{q}^B_\th(t),k^B_{\th}(t),\cl{q}^A_\th(t)),v-\h{v}(t)\rangle\Big|\cG_t\]\cl{Q}(d\th)\geq0,
                \end{aligned}
              \end{align}
             where $(p^F_\th(\cd),p^B_\th(\cd),q^B_\th(\cd),\cl{q}^B_\th(\cd),k^B_{\th}(\cd))$ and
             $(\bar{p}^A_\th(\cd),q^A_\th(\cd),\bar{q}^A_\th(\cd),k^A_\th(\cd))$ are the solutions to the adjoint equations (\ref{4.37}) and (\ref{4.36}), respectively.
\end{theorem}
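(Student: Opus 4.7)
The plan is to bridge the variational inequality of \autoref{le 5.3} and the pointwise Hamiltonian condition by a duality between the variational processes $(R^1_\th, X^1_\th, Y^1_\th, Z^1_\th, \bar{Z}^1_\th, K^1_\th)$ and the adjoint triples $p^F_\th$, $(p^A_\th,q^A_\th,\bar{q}^A_\th,k^A_\th)$, $(p^B_\th,q^B_\th,\bar{q}^B_\th,k^B_\th)$. I apply It\^{o}'s product rule on $[0,T]$ to the three products $p^A_\th(t)R^1_\th(t)$, $p^B_\th(t)X^1_\th(t)$, and $p^F_\th(t)Y^1_\th(t)$; for the latter two I work under $\cl{\dbP}^{\h{v}}_\th$ so that $(W,\cl{W}_\th)$ is Brownian and every $d\cl{W}_\th$-integral vanishes in expectation, while for the first I work under $\dbP$, using $dG$ as the driving Brownian motion. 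The moment bounds \eqref{6.44}, \autoref{le 5.1}, and \autoref{le 5.2}, combined with the Burkholder--Davis--Gundy inequality, guarantee that every stochastic integral appearing has zero expectation, so only drift contributions remain.

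Summing the three It\^o identities and using the terminal/initial conditions $R^1_\th(0)=X^1_\th(0)=0$, $p^A_\th(T)=\Psi_\th(\h X_\th(T))$, $p^B_\th(T)=\pa_x\Psi_\th(\h X_\th(T))-\pa_x\F_\th(\h X_\th(T))p^F_\th(T)$, $p^F_\th(0)=-\pa_y\L_\th(\h Y_\th(0))$, $Y^1_\th(T)=\pa_x\F_\th(\h X_\th(T))X^1_\th(T)$, the left-hand side reproduces the boundary functional of \autoref{le 5.3}, with the factor $\h R_\th$ in front of $\pa_x\Psi_\th X^1_\th(T)$ entering through Bayes' formula $d\cl{\dbP}^{\h{v}}_\th/d\dbP=\h R_\th(T)$. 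On the right-hand side, the coefficients of \eqref{4.36} and \eqref{4.37} are engineered so that every drift term involving $X^1_\th, Y^1_\th, Z^1_\th,\bar Z^1_\th, K^1_\th$ cancels pairwise---in particular, the residual $\pa_xh_\th\bar q^A_\th X^1_\th$ left over from the $p^B_\th X^1_\th$ identity is cancelled by the symmetric contribution from $p^A_\th R^1_\th$ through $dR^1_\th=(R^1_\th h_\th+\h R_\th\pa_xh_\th X^1_\th)dG$. What survives is the coefficient of $v(t)-\h v(t)$, which by the definition \eqref{4.38} of the Hamiltonian (and since $h_\th$ does not depend on $v$) equals $\pa_vH_\th(t,\h X_\th(t),\ldots)$. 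The variational inequality of \autoref{le 5.3} is thereby transformed into
\begin{equation*}
\int_{\Th}\dbE^{\cl{\dbP}^{\h{v}}_\th}\!\!\int_0^T\big\langle \pa_vH_\th(t,\ldots),\,v(t)-\h{v}(t)\big\rangle dt\,\cl{Q}(d\th)\ge 0,\q \forall\,v(\cd)\in\cV_{ad}.
\end{equation*}

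To pass to the pointwise statement, since $v(\cd)$ is $\dbG$-adapted I insert $\cG_t$-conditioning inside each $\dbE^{\cl{\dbP}^{\h{v}}_\th}$ and apply Fubini to exchange $\int_\Th\cl{Q}(d\th)$ with the $\cG_t$-conditioning and with the Lebesgue integral in $t$. Testing with the admissible perturbation $v(t,\o)=\h{v}(t,\o)+\mathbf 1_A(t,\o)(w-\h{v}(t,\o))$ for arbitrary $w\in V$ and $A\in \sB([0,T])\otimes\cG_t$, followed by a standard Lebesgue differentiation argument in $t$, then yields the required pointwise inequality $dt\otimes d\cl{\dbP}^{\h{v}}_\th$-a.s.

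I expect the main obstacle to be the $\th$-measurability step rather than the It\^{o} bookkeeping itself: because the adjoint processes and the density $\h R_\th$ depend on $\th$ only through continuity estimates (not through any differentiable structure), the exchange of $\int_\Th\cl{Q}(d\th)$ with conditional expectations and with the Lebesgue integral in $t$ cannot be justified by a naive Fubini. The partition-of-unity approximation on compact subsets of the Polish space $\Th$ built in \autoref{le 7.1}, together with the uniform-in-$\th$ continuity estimates of \autoref{le 6.6} for the adjoint processes and of \autoref{le 5.3-0} for $\h R_\th$, is precisely what renders the $\th$-integrand $\dbG$-progressively measurable and thereby legitimizes the measurable selection.
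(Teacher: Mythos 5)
Your duality computation is essentially the paper's own argument: It\^{o}'s formula applied to the pairings of the adjoint processes with $(R^1_\th, X^1_\th, Y^1_\th)$, cancellation of all state-variation terms so that only the $\pa_v H_\th$ coefficient of $v-\h v$ survives, Bayes' formula to move between $\dbP$ and $\bar{\dbP}^{\h v}_\th$, and then $\cG_t$-conditioning, the measurability argument of \autoref{le 7.1}, and Fubini to localize. Your variant of working with $p^A_\th R^1_\th$ under $\dbP$ (with $dG$ as the Brownian motion) instead of the paper's $p^A_\th\cdot(\h R_\th)^{-1}R^1_\th$ under $\bar{\dbP}^{\h v}_\th$ is an equivalent bookkeeping choice; both yield the identity $\dbE[\Psi_\th(\h X_\th(T))R^1_\th(T)]=\dbE\big[\int_0^T\bar q^A_\th(t)\h R_\th(t)\pa_x h_\th(t)X^1_\th(t)\,dt\big]$. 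The localization at the end via indicator perturbations is also fine.

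There is, however, one genuine gap: the passage from \autoref{le 5.3} to your displayed inequality with a single $\cl Q$ valid for \emph{all} $v(\cd)\in\cV_{ad}$. \autoref{le 5.3} only gives, for each fixed $v$, that $\sup_{Q\in\cQ^{\h v}}\int_\Th S(\th,v)\,Q(d\th)\ge 0$, where $S(\th,v)$ is the first-order boundary functional; a priori the maximizing $Q$ depends on $v$. The theorem you are proving asserts the existence of one $\cl Q\in\cQ^{\h v}$ working uniformly in $v$, and this is exactly the content of \autoref{th 5.4}, which the paper obtains by verifying that $v\mapsto S(\th,v)$ is affine (by linearity of the variational equations) and continuous, invoking Sion's minimax theorem to interchange $\inf_v$ and $\sup_Q$, and then extracting a weak limit $\cl Q$ of near-optimal measures $Q^\d$ using the weak compactness and convexity of $\cQ^{\h v}$. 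Your proposal writes $\cl Q$ into the integrated inequality without this step; as stated, the measure could still depend on the perturbation $v$, and the pointwise conclusion of the theorem would not follow. Everything after that point is sound once \autoref{th 5.4} is supplied.
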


             \begin{proof}
              From It\^{o}'s formula, one has
              \begin{equation*}
              \left\{
              \begin{aligned}
              d[(\h{R} _\th(t))^{-1}R^{1} _\th(t)]&=X_\th^1(t)\pa_xh_\th(t)d\cl{W}_\th(t),\ t\in[0,T],\\
              [(\h{R} _\th(0))^{-1}R^{1} _\th(0)]&=0.
              \end{aligned}
              \right.
              \end{equation*}
              Applying It\^{o}'s formula to $[(\h{R} _\th(\cd))^{-1}R^{1} _\th(\cd)]p^A_\th(\cd)$, $p^F_\th(\cd)Y^{1}_\th(\cd)$,  $p^B_\th(\cd) X^{1}_\th(\cd)$, respectively, we have
              \begin{equation*}
               \begin{aligned}
              &  \dbE^{\bar{\dbP}^{\h{v}}_\th}\[[(\h{R} _\th(T))^{-1}R^{1} _\th(T)]\Psi_\th(\h{X}_\th(T))\]
                =\dbE^{\bar{\dbP}^{\h{v}}_\th}\[\int_0^T\bar{q}^A_\th(t)\pa_xh_\th(t)X_\th^1(t)dt\],\\
              &  \dbE^{\bar{\dbP}^{\h{v}}_\th}\[p_\th^F(T)\pa_x\Phi_\th(\h X_\th(T))X_\th^1(T)+\pa_x\L_\th(Y_\th(0))Y^1_\th(0)\]\\
                =  &-\dbE^{\bar{\dbP}^{\h{v}}_\th}\[\int_0^T[\pa_xf_\th(t)X_\th^1(t)+\langle\pa_vf_\th(t),v(t)-\h{v}(t)\rangle]p^F_\th(t)dt\],\\
              \end{aligned}
              \end{equation*}
              and
              \begin{equation*}
               \begin{aligned}
              &  \dbE^{\bar{\dbP}^{\h{v}}_\th}\[\pa_x\Psi_\th(\h X_\th(T))X_\th^1(T)-p_\th^F(T)\pa_x\Phi_\th(\h X_\th(T))X_\th^1(T)\]\\
               =  &  \dbE^{\bar{\dbP}^{\h{v}}_\th}\[\int_0^T(\pa_xf_\th(t)X_\th^1(t)p_\th^F(t)-\bar{q}_\th^A(t)\pa_xh_\th(t)X_\th^1(t))dt\]\\
              & +\dbE^{\bar{\dbP}^{\h{v}}_\th}\[\int_0^T\langle \pa_vb_\th(t)p^B_\th(t)+\pa_v\si_\th(t)q^B_\th(t)+\pa_v\bar{\si}_\th(t)\bar{q}^B_\th(t)\\
              &\hskip 1.5cm  +\sum_{i,j=1,i\neq j}^I\partial_v \beta_{\th,ij}(t)k^B_{\th,ij}(t)\l_{ij}\mathbf{1}_{\{\a(t-)=i\}},v(t)-\h v(t)\rangle dt\].
              \end{aligned}
              \end{equation*}
              Hence, it yields
              \begin{equation*}
               \begin{aligned}
                &  \dbE^{\bar{\dbP}^{\h{v}}_\th}\[[(\h{R} _\th(T))^{-1}R^{1} _\th(T)]\Psi_\th(\h{X}_\th(T))+\pa_x\L_\th(\h Y_\th(0))Y^1_\th(0)
                   +\pa_x\Psi_\th(\h X_\th(T))X_\th^1(T)\]\\
                =&\dbE^{\bar{\dbP}^{\h{v}}_\th}\bigg[\int_0^T\langle \pa_vb_\th(t)p^B_\th(t)
                       + \pa_v\si_\th(t) q^{B}_\th(t)
                       +\pa_v\bar{\si}_\th(t)\bar{q}^{B}_\th(t)\\
                &\hskip 1.2cm     +\sum_{i,j=1,i\neq j}^I\partial_v \beta_{\th,ij}(t)k^B_{\th,ij}(t)  \l_{ij}\mathbf{1}_{\{\a(t-)=i\}}
                        -p^F_\theta(t) \partial_v f_\theta(t), v(t)-\h{v}(t)\rangle dt\bigg]\\
                &  =\dbE^{\bar{\dbP}^{\h{v}}_\th}\bigg[\int_0^T\langle \pa_vH_\th(t,\h{X}_\th(t),\h{Y}_\th(t),\h{Z}_\th(t),\h{\bar{Z}}_\th(t),
                \h{K}_{\th},\h{v}(t),\a(t-), p^F_\th(t),p^B_\th(t),q^B_\th(t),\\
                &  \qq\qq\qq\cl{q}^B_\th(t),k^B_{\th}(t),\cl{q}^A_\th(t)), v(t)-\h{v}(t)\rangle dt\bigg].\\
                 \end{aligned}
              \end{equation*}
             According to \autoref{th 5.4} and the above equality,  we know, for $v(\cd)\in \mathcal{V}_{ad}$,
             \begin{equation*}
             \begin{aligned}
             0  &   \leq \int_\Th \dbE\[R_\th^{1}(T)\Psi_\th(\h{X}_\th(T))+\h{R}_\th(T)\pa_x\Psi_\th(\h{X}_\th(T))X_\th^{1}(T)
                    +\pa_x\L_\th(\h{Y}_\th(0))Y_\th^{1}(0)\]\cl{Q}(d\theta)\\
                &  =\int_\Th \dbE^{\bar{\dbP}^{\h{v}}_\th}[(\h{R} _\th(T))^{-1}R^{1} _\th(T)\Psi_\th(\h{X}_\th(T))+\pa_x\Psi_\th(\h{X}_\th(T))X_\th^{1}(T)
                    +\pa_x\L_\th(\h{Y}_\th(0))Y_\th^{1}(0) ]\cl{Q}(d\th)\\
                &  =\int_\Th \dbE^{\bar{\dbP}^{\h{v}}_\th}\bigg[\int_0^T\langle\pa_vH_\th(t,\h{X}_\th(t),\h{Y}_\th(t),\h{Z}_\th(t),\h{\bar{Z}}_\th(t),
                   \h{K}_{\th}(t),\h{v}(t),\a(t-), p^F_\th(t),p^B_\th(t),q^B_\th(t),\\
                &  \qq\qq\qq\cl{q}^B_\th(t),k^B_{\th}(t),\cl{q}^A_\th(t)),v(t)-\h{v}(t)\rangle dt\bigg]\cl{Q}(d\theta)\\
                &  =\int_\Th \dbE^{\bar{\dbP}^{\h{v}}_\th}\bigg[\int_0^T\dbE^{\bar{\dbP}^{\h{v}}_\th}\[\langle \pa_vH_\th(t,\h{X}_\th(t),\h{Y}_\th(t),
                   \h{Z}_\th(t),\h{\bar{Z}}_\th(t),\h{K}_{\th},\h{v}(t),\a(t-), p^F_\th(t),p^B_\th(t),q^B_\th(t),\\
                &  \qq\qq\qq
                \cl{q}^B_\th(t),k^B_{\th}(t),\cl{q}^A_\th(t)),v(t)-\h{v}(t)\rangle|\cG_t\]dt\bigg]\cl{Q}(d\th).
             \end{aligned}
             \end{equation*}
             Note that in the above equality, we suppress the superscript $v$ in $\mathbb{R}^{1;v}_\th, X^{1;v}_\th,Y^{1;v}_\th$ and
             $\o$ in $\pa_v H_\th(\o,\cd)$
             for convenience.
             According to \autoref{le 7.1}, one can know that the map
             $$
             \begin{aligned}
                &  (\th,t,\o)\mapsto\dbE^{\bar{\dbP}^{\h{v}}_\th}\[\langle \pa_vH_\th(t,\h{X}_\th(t),\h{Y}_\th(t),
                   \h{Z}_\th(t),\h{\bar{Z}}_\th(t),\h{K}_{\th},\h{v}(t),\a(t-), p^F_\th(t),p^B_\th(t),q^B_\th(t),\\
                &  \qq\qq\qq
                \cl{q}^B_\th(t),k^B_{\th}(t),\cl{q}^A_\th(t)),v(t)-\h{v}(t)\rangle|\mathcal{G}_t\](\o)
             \end{aligned}
              $$
             is a $\cG_t$-progressively measurable process. Fubini's Theorem allows us to show, for arbitrary $v(\cd)\in \cV_{ad}$,
            \begin{equation*}
            \begin{aligned}
            & \dbE^{\bar{\dbP}^{\h{v}}_\th}\bigg[\int_0^T\int_\Th \dbE^{\bar{\dbP}^{\h{v}}_\th}\[ \langle\pa_vH_\th(t,\h{X}_\th(t),\h{Y}_\th(t),
                   \h{Z}_\th(t),\h{\bar{Z}}_\th(t),\h{K}_{\th},\h{v}(t),\a(t-), p^F_\th(t),p^B_\th(t),q^B_\th(t),\\
            &  \qq\qq\qq \cl{q}^B_\th(t),k^B_{\th}(t),\cl{q}^A_\th(t)),v(t)-\h{v}(t)\rangle|\cG_t\]\cl{Q}(d\th)dt\bigg]\geq0,
            \end{aligned}
             \end{equation*}
            which implies $dt\ts d{\bar{\dbP}^{\h{v}}_\th}$-a.s.,
            \begin{equation*}
            \begin{aligned}
            & \int_\Th \dbE^{\bar{\dbP}^{\h{v}}_\th}\[\langle \pa_vH_\th(t,\h{X}_\th(t),\h{Y}_\th(t),
                   \h{Z}_\th(t),\h{\bar{Z}}_\th(t),\h{K}_{\th},\h{v}(t),\a(t-), p^F_\th(t),p^B_\th(t),q^B_\th(t),\\
            &  \qq\qq\qq \cl{q}^B_\th(t),k^B_{\th}(t),\cl{q}^A_\th(t)),v-\h{v}(t)\rangle|\cG_t\]\cl{Q}(d\th)\geq0,\
             v\in V.
            \end{aligned}
            \end{equation*}
            \end{proof}
\subsection{Sufficient Condition}

            This subsection concerns the sufficient condition for the optimal control $\h v(\cd)$. We go on using the notation $\cl W_\th(\cd)=\cl W_\th^{\h{v}}(\cd),$  for each $\th\in \Th$. Denote  $h^v_\th(t):=h_\th(t,X^v_\th(t), \a(t-))$  and
            $h_\th(t):= h^{\h v}_\th(t), t\in[0,T].$
            For $p^B\in \dbR, h\in \dbR, p^F\in \dbR$, define two maps $L:[0,T]\ts\dbR\ts V\ts \cI\ra\dbR,\ S:\dbR\ts \cI\ra\dbR$ by
            \begin{equation*}
            \begin{aligned}
            L(t,x,v,i_0;h,p^B)  & := p^Bh\bar{\si}_\th(t,x,v,i_0),\q  S(x,i_0;p^F)  :=p^F\Phi(x,i_0).
            \end{aligned}
            \end{equation*}

              Next, we give   sufficient conditions for optimal control.
\begin{theorem}\label{th 5.6}\rm
           Under  \autoref{assumption3}  and  \autoref{assumption4}, let $\h{v}(\cd)\in\cV_{ad}$, $\cl{Q}\in \cQ^{\h{v}}$, let $(\h{X}_\th(\cd),\h{Y}_\th(\cd),$
           $\h{Z}_\th(\cd),$ $\h{\bar{Z}}_\th(\cd),\h{K}_{\th}(\cd))$ be the solution to Eq. (\ref{4.1}) with $\h{v}(\cd)$, and let $(p^F_\th(\cd),p^B_\th(\cd),
           q^B_\th(\cd),\bar{q}^B_\th(\cd),k^B_{\th}(\cd))$ and $(p^A_\th(\cd),q^A_\th(\cd),\bar{q}^A_\th(\cd), k^A_\th(\cd))$ be the solutions to Eqs. (\ref{4.37}) and (\ref{4.36}), respectively.
           Assume that
           \begin{itemize}
           	\item [i)]
             the function $H_\th(t,x,y,z,\bar{z},k,v,i_0,p^F,p^B,q^B,\bar{q}^B,k^B,\bar{q}^A)$ defined in (\ref{4.38}) is convex with respect to $x,y,z,\cl{z},k,v$;
       \item [ii)]  $x\mapsto L(\cd,x,\cd,\cd;\cd,\cd)$ is convex and $x\mapsto S(x,\cd;\cd)$ is concave;
            \item [iii)]  for any $v(\cd)\in \mathcal{V}_{ad},t\in[0,T],$ ${\bar{\dbP}^{\h{v}}_\th}$-a.s.,
            \begin{equation}\label{4.43-2}
            \left\{   \begin{aligned}
            & R^v_\th(T)\Psi_\th(X_\th^v(T))-\h R_\th(T)\Psi_\th(\h X_\th(T))-\Psi_\th( X_\th(T))(R^v_\th(T)-\h R_\th(T))\\
            & \q\q\q\q\q\q\q\q\q\q -\h R_\theta(T)\partial_x\Psi_\theta(\h X_\theta (T))(X_\theta^v(T)-\h X_\theta(T))\geq0,\\
            & p_\th^B(t)\Big(\bar{\si}^v_\th(t)h_\th^v(t)-\bar{\si}_\th(t)h_\th(t)-h_\th(t)\pa_x\bar{\si}_\th(t)(X_\th^v(t)-X_\th(t))\\
             &\qq\qq\qq\qq\qq\qq\qq  -\bar{\si}_\th(t) \pa_xh_\th(t)(X_\th^v(t)-X_\th(t))\Big)\geq0,\\
            & \bar{q}_\th^A(t)(\h R_\th(t))^{-1}\Big(R^v_\th(t)h_\th^v(t)-\h R_\th(t)h_\th(t)-h_\th(t)(R^v_\th(t)-\widehat{R}_\th(t))\\
            &\qq\qq\qq\qq\qq\qq\qq -\widehat{R}_\th(t)\pa_xh_\th(t)(X_\th^v(t)-X_\th(t))\Big)\geq0.
            \end{aligned}  \right.
            \end{equation}
             \end{itemize}
            If $dt\ts d{\bar{\dbP}^{\h{v}}_\th}$-a.s.,
              \begin{align}\nonumber
                \begin{aligned}
                &  \int_\Th \dbE^{\bar{\dbP}^{\h{v}}_\th}\[\langle\pa_vH_\th(t,\h{X}_\th(t),\h{Y}_\th(t),\h{Z}_\th(t),\h{\bar{Z}}_\th(t),\h{K}_{\th},\h{v}(t),\a(t-), \\
                &  \qq\qq p^F_\th(t),p^B_\th(t),q^B_\th(t),\cl{q}^B_\th(t),k^B_{\th}(t),\cl{q}^A_\th(t)),v-\h{v}(t)\rangle|\cG_t\]\cl{Q}(d\th)\geq0,
                \end{aligned}
              \end{align}
              then the control $\h{v}(\cd)$ is optimal.
\end{theorem}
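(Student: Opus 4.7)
The plan is to show $J(v(\cd)) \geq J(\h v(\cd))$ for every $v(\cd)\in\cV_{ad}$. Since $\cl Q\in\cQ^{\h v}$, the definition of $J(\cd)$ yields
\[
J(v(\cd)) - J(\h v(\cd)) \ges \int_\Th \dbE\big[R^v_\th(T)\Psi_\th(X^v_\th(T)) - \h R_\th(T)\Psi_\th(\h X_\th(T)) + \L_\th(Y^v_\th(0)) - \L_\th(\h Y_\th(0))\big]\cl Q(d\th).
\]
I would begin by linearizing each of the three differences using the hypotheses: the first inequality of (\ref{4.43-2}) produces $R^v_\th(T)\Psi_\th(X^v_\th(T)) - \h R_\th(T)\Psi_\th(\h X_\th(T)) \ges \Psi_\th(\h X_\th(T))(R^v_\th(T)-\h R_\th(T)) + \h R_\th(T)\pa_x\Psi_\th(\h X_\th(T))(X^v_\th(T)-\h X_\th(T))$; convexity of $\L_\th$ together with $p^F_\th(0)=-\pa_x\L_\th(\h Y_\th(0))$ gives $\L_\th(Y^v_\th(0))-\L_\th(\h Y_\th(0)) \ges -p^F_\th(0)(Y^v_\th(0)-\h Y_\th(0))$; and concavity of $S(x,i;p^F_\th(T))$ under hypothesis ii) yields $p^F_\th(T)[\F_\th(X^v_\th(T))-\F_\th(\h X_\th(T))] \leq p^F_\th(T)\pa_x\F_\th(\h X_\th(T))(X^v_\th(T)-\h X_\th(T))$, which dovetails with the terminal condition $p^B_\th(T)=\pa_x\Psi_\th(\h X_\th(T))-\pa_x\F_\th(\h X_\th(T))p^F_\th(T)$.

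Next, I switch to the probability $\cl\dbP^{\h v}_\th$, under which $(W,\cl W_\th)$ is a standard $2$-dimensional Brownian motion, and apply It\^o's formula to three processes: (i) $p^A_\th(t)\,(\h R_\th(t))^{-1}(R^v_\th(t)-\h R_\th(t))$, invoking the third inequality of (\ref{4.43-2}) to absorb the nonlinear $h_\th$-residue; (ii) $p^F_\th(t)(Y^v_\th(t)-\h Y_\th(t))$, whose boundary values precisely match the $p^F_\th(T)[\F_\th(X^v_\th(T))-\F_\th(\h X_\th(T))]$ and $-p^F_\th(0)(Y^v_\th(0)-\h Y_\th(0))$ terms obtained in the previous step; and (iii) $p^B_\th(t)(X^v_\th(t)-\h X_\th(t))$, using the second inequality of (\ref{4.43-2}) to absorb the nonlinear $\bar\si_\th h_\th$-residue. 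Taking $\dbE^{\cl\dbP^{\h v}_\th}$ annihilates the martingale pieces driven by $W$, $\cl W_\th$ and $M$ (integrability follows from (\ref{4.7-2}), (\ref{6.44}), and \autoref{le 5.1}), and summing the three identities the drift terms reassemble into the Hamiltonian difference $H_\th(t,X^v_\th,Y^v_\th,Z^v_\th,\cl Z^v_\th,K^v_\th,v,\cd) - H_\th(t,\h X_\th,\h Y_\th,\h Z_\th,\h{\cl Z}_\th,\h K_\th,\h v,\cd)$ minus its first-order expansion in $(x,y,z,\bar z,k)$ at the $\h{}$-point.

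The joint convexity of $H_\th$ in $(x,y,z,\bar z,k,v)$ from hypothesis i) then bounds this Hamiltonian difference from below by $\langle\pa_v H_\th,v(t)-\h v(t)\rangle$. Since $v(t)-\h v(t)$ is $\cG_t$-measurable, one may insert $\dbE^{\cl\dbP^{\h v}_\th}[\,\cd\,|\,\cG_t]$ without changing the value, swap $\int_\Th\cl Q(d\th)$ inside the $dt$-integral via Fubini, and apply the hypothesized inequality pointwise in $(t,\o)$ to obtain non-negativity. Tracing the chain of inequalities back yields $J(v(\cd)) \ges J(\h v(\cd))$.

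The main obstacle will be the bookkeeping in the second step: one must verify that the drifts of $p^A_\th$, $p^F_\th$ and $p^B_\th$, together with the cross variations involving $h_\th$, $\bar\si_\th$ and the jump coefficients $\b_{\th,ij}$, cancel \emph{exactly} the gradient terms $\pa_x H_\th, \pa_y H_\th, \pa_z H_\th, \pa_{\bar z} H_\th, \pa_k H_\th$, leaving behind precisely the three nonlinear residues dominated by the inequalities of hypothesis iii). Once this matching is carried out, the structure of the argument is transparent and mirrors the adjoint computation used in the necessity proof of \autoref{th 5.1}.
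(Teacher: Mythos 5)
Your proposal follows essentially the same route as the paper's proof: the same initial lower bound via $\cl Q\in\cQ^{\h v}$, the same linearizations using the first inequality of (\ref{4.43-2}), the convexity of $\L_\th$ and the concavity of $S$, the same three It\^o computations (for $p^B_\th\eta_\th$, $p^F_\th\xi_\th$ and $p^A_\th(\h R_\th)^{-1}\chi_\th$ under $\bar{\dbP}^{\h v}_\th$), and the same final use of the convexity of $H_\th$ together with hypothesis iii) and the conditioning on $\cG_t$. The bookkeeping you flag as the main obstacle is exactly what the paper carries out via the residues $A_{1,\th},\dots,A_{6,\th}$, with the convexity of $L$ in $x$ absorbing the remaining $\bar\si_\th h_\th$ piece.
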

              \begin{remark} \rm
         If we know  full information, i.e., $h(\cd)=\bar{\si}(\cd)=0$, then $L(\cd)\equiv0, \h{R}_\th(\cd)=R^v_\th(\cd)\equiv1, v(\cd)\in \cV_{ad}$, and (\ref{4.43-2}) reduces to the condition that  $\Psi_\theta(x)$ is convex with respect to $x$.
If  $\Lambda_\theta(x)=x,\Psi_\th(\cd)\equiv0$, then $J(v(\cd))=\sup_{Q\in \cQ}\int_{\Th}Y^v_\th(0)Q(d\th).$
             Moreover, our FBSDE is only driven by Brownian motion  (without the canonical martingales for  Markov chain), then $p^F_\theta(\cdot)$ satisfies
              \begin{equation*}
              \left\{  \begin{aligned}
              dp^F_\theta(t)    & =\pa_yf_\th(t)p^F_\th(t)dt+\pa_zf_\th(t)p^F_\th(t)dW(t),\ t\in[0,T],\\
                p^F_\theta(0) & =-1.
               \end{aligned}  \right.
             \end{equation*}
              Set  $m_\th(t)=-p^F_\th(t), t\in[0,T].$ It is clear that  $m_\th(\cd)$ satisfies
              \begin{equation*}
               \left\{ \begin{aligned}
                dm_\th(t)   &=\pa_yf_\th(t)m_\th(t)dt+\pa_zf_\th(t)m_\th(t)dW(t),\ t\in[0,T],\\
               m_\theta(0)  &=1,
              \end{aligned} \right.
              \end{equation*}
              which is just Eq. (20) of Hu and Wang \cite{Hu-Wang-20}. In the meanwhile, from the relation $S(x,i_0;p^F)=p^F\Phi(x,i_0)=(-m)\cd\Phi(x,i_0)$, we know that the concavity of $S(x)$ with respect to $x$ is equivalent to the convexity of   $\Phi(x)$ with respect to $x$.
              In this case, our \autoref{th 5.6} reduces to the sufficient condition for the optimal control problem with full information under model uncertainty. See  Hu and Wang \cite[Theorem 3.11]{Hu-Wang-20}.
              \end{remark}
              \begin{proof}[Proof of \autoref{th 5.6}]
               For simplicity, for $v(\cd)\in \cV_{ad}$ and $\th\in \Th$, by $(X_\th^v(\cd),Y_\th^v(\cd),Z_\th^v(\cd),\bar{Z}_\th^v(\cd),K_\th^v(\cd))$ and
               $R_\th^v(\cd)$ we denote the solutions to Eqs. (\ref{4.3}) and (\ref{4.7-1}), respectively. Denote
               $(\eta_\th,\xi_\th,\g_\th,\bar{\g}_\th,\z_\th,\chi_\th)=(X_\th^v-\h{X}_\th,Y_\th^v-\h{Y}_\th,Z_\th^v
                -\h{Z}_\th,\bar{Z}_\th^v-\h{\bar{Z}}_\th, K_\th^v-\h{K}_\th,R_\th^v-\h{R}_\th)$. Then we have
               \begin{equation*}
               \left\{  \begin{aligned}
                  d\eta_\th(t)& =([\pa_xb_\th(t)-h_\th(t)\pa_x\bar{\si}_\th(t)-\bar{\si}_\th(t) \pa_xh_\th(t)     ]\eta_\th(t)+A_{1,\th}(t))dt\\
                            & \q+(\pa_x\si_\th(t)\eta_\th(t)+A_{2,\th}(t))dW(t)+(\pa_x\bar{\si}_\th(t)\eta_\th(t)+A_{3,\th}(t))dG(t)\\
                            & \q+(\pa_x\b_\th(t)\eta_\th(t)+A_{4,\th}(t))\bullet dM(t),\\
                  \eta_\th(0) & =0,
                \end{aligned} \right.
               \end{equation*}
               where
              $$  \begin{aligned}
               A_{1,\th}(t) & =b_\th^v(t)-b_\th(t)-\pa_xb_\th(t)\eta_\th(t)\\
                            & \q-\(\bar{\si}^v_\th(t)h^v_\th(t)-\bar{\si}_\th(t)h_\th(t)-h_\th(t)\pa_x\bar{\si}_\th(t)
                               \eta_\th(t)-\bar{\si}_\th(t) \pa_xh_\th(t)\eta_\th(t)\),\\
                A_{2,\th}(t)& =\si_\th^v(t)-\si_\th(t)-\pa_x\si_\th(t)\eta_\th(t),\q
                A_{3,\th}(t) =\bar{\si}_\th^v(t)-\bar{\si}_\th(t)-\pa_x\bar{\si}_\th(t)\eta_\th(t),\\
                A_{4,\th}(t)& =\b_\th^v(t)-\b_\th(t)-\pa_x\b_\th(t)\eta_\th(t).
              \end{aligned} $$
              Recall that $dG(t)=h_\theta(t)dt+d\cl W_\th(t)$, then
             \begin{equation*}
             \left\{ \begin{aligned}
             d\eta_\th(t) & =([\pa_xb_\th(t)- \bar{\si}_\th(t) \pa_xh_\th(t) ]\eta_\th(t)+A_{1,\th}(t)+h_\th(t)A_{3,\th}(t))dt\\
                        & \q+(\pa_x\si_\th(t)\eta_\th(t)+A_{2,\th}(t))dW(t)+(\pa_x\bar{\si}_\th(t)\eta_\th(t)+A_{3,\th}(t))d\cl{W}_\th(t)\\
                        & \q+(\pa_x\b_\th(t)\eta_\th(t)+A_{4,\th}(t))\bullet dM(t),\\
               \eta_\th(0)& =0.
             \end{aligned} \right.
             \end{equation*}
              Applying It\^{o}'s formula to $p^B_\th(\cd)\eta_\th(\cd)$, we derive
              \begin{equation}\label{4.45}
              \begin{aligned}
               & \dbE^{\bar{\dbP}^{\h{v}}_\th}\[\pa_x\Psi_\th(X_\th(T))\eta_\th(T)-\pa_x\Phi_\th(\h{X}_\th(T))p^F_\th(T)\eta_\th(T)\]\\
                 =&\dbE^{\bar{\dbP}^{\h{v}}_\th}\[\int_0^T p_\th^F(t)\pa_xf_\th(t)\eta_\th(t)
               -\bar{q}_\th^A(t)\pa_xh_\th(t)\eta_\th(t) dt\]\\
               & +\dbE^{\bar{\dbP}^{\h{v}}_\th}\[\int_0^T(A_{1,\th}(t)+h_\th(t)A_{3,\th}(t))p_\th^B(t)+A_{2,\th}(t)q_\th^B(t) +A_{3,\th}(t)\bar{q}_\th^B(t)\\
               & \hskip 1.8cm+\sum_{i,j=1,i\neq j}^IA_{4,\th,ij}(t)k_{\th,ij}^B(t)\l_{ij}\mathbf{1}_{\{\a(t-)=i\}}dt\].
               \end{aligned}
               \end{equation}
              Notice
             \begin{equation*}
              \left\{ \begin{aligned}
              d\xi_\th(t) & =-\bigg\{\pa_yf_\th(t)\xi_\th(t)+\pa_zf_\th(t)\g_\th(t)+\pa_{\bar{z}}f_\th(t)\bar{\g}_\th(t)
                             + \sum_{i,j=1,i\neq j}^I\pa_{k}f_\theta(t)\z_{\th,ij}(t)\l_{ij}\mathbf{1}_{\{\a(t-)=i\}}\\
                           & \qq\q-\bar{\g}_\th(t)h_\th(t)+A_{5,\th}(t)\bigg\}dt+\g_\th(t)dW(t)+\bar{\g}_\th(t)d\cl{W}_\th(t)
                               +\z_\th(t)\bullet dM(t),\\
               \xi_\th(T) & =\F_\th(X^v_\th(T), \a(T))-\F_\th(\h X_\th(T), \a(T)),
             \end{aligned}  \right.
             \end{equation*}
            where
            $$
            \begin{aligned}
            A_{5,\th}(t)  =f_\th^v(t)-f_\th(t)-\pa_yf_\th(t)\xi_\th(t)-
                           \pa_zf_\th(t)\g_\th(t) -  \pa_{\bar{z}}f_\th(t)\bar{\g}_\th(t)
                         - \sum_{i,j=1,i\neq j}^I\pa_{k}f_\th(t)\z_{\th,ij}(t)\l_{ij}\mathbf{1}_{\{\a(t-)=i\}}.
            \end{aligned}
            $$
           Applying It\^{o}'s formula to  $p^F_\th(t)\xi_\th(t)$, it follows
           \begin{equation}\label{4.46-1}
            \begin{aligned}
             \dbE^{\bar{\dbP}^{\h{v}}_\th}[p^F_\th(T)(\F_\th(X^v_\th(T), \a(T))-\F_\th(\h X_\th(T), \a(T))) +\pa_y\L_\th(Y_\th(0))\xi_\th(0)]
             =-\dbE^{\bar{\dbP}^{\h{v}}_\th}\[\int_0^TA_{5,\th}(t)p_\th^F(t)dt\].
           \end{aligned}
           \end{equation}
           In addition,
           \begin{equation*}
           \begin{aligned}
          \chi_\th(0)   =0\q\hbox{and}\q    d\chi_\th(t) =\[h_\th(t)\chi_\th(t)+\h R_\th(t)\pa_xh_\th(t)\eta_\th(t)+A_{6,\th}(t) \]dG(t),\q t\in[0,T],
           \end{aligned}
           \end{equation*}
            where
           $$
            A_{6,\th}(t)=R^v_\th(t)h^v_\th(t)-\widehat{R}_\th(t)h_\th(t)-h_\th(t)\chi_\th(t)-\widehat{R}_\th(t)\pa_xh_\th(t)\eta_\th(t).
           $$
           It follows from  It\^{o}'s formula to $p^A_\th(t)[(R_\th(t))^{-1}\chi_\th(t)]$ on $[0,T]$ that
           \begin{equation}\label{4.47}
            \begin{aligned}
             &\dbE^{\bar{\dbP}^{\h{v}}_\th}\[ \Psi_\th(\h{X}_\th(T))[(\widehat{R}_\th(T))^{-1}\chi_\th(T)]   \]
              =\dbE^{\bar{\dbP}^{\h{v}}_\th}\[\int_0^T\bar{q}_\th^ A(\pa_xh_\th(t)\eta_\th(t)+(\widehat{R}_\th(t))^{-1}A_{6,\th}(t))dt\].
             \end{aligned}
           \end{equation}
            Since $\L_\th(x)$ is convex with respect to $x$, $S(x,\cd;\cd)$ is concave with respect to $x$
            as well as the first inequality of (\ref{4.43-2}), we arrive  at from the definition of $\cQ^{\h{v}}$ that
           \begin{equation*}
             \begin{aligned}
             & J(v(\cd))-J(\h{v}(\cd))\\
               \geq &\int_\Th\dbE\[R^v_\th(T)\Psi_\th(X_\th^v(T))-\h R_\th(T)\Psi_\th(\h{X}_\th(T))+\L_\th(Y^v_\th(0))-\L_\th(\h{Y}_\th(0))\] \cl{Q}(d\th)\\
              \geq &\int_\Th\dbE\[\Psi_\th(\h{X}_\th(T))\chi_\th(T)\] \cl{Q}(d\th)
                +\int_\Th\dbE[\h R_\th(T)\pa_x\Psi_\th(\h X_\th (T))\eta_\th(T)] \cl{Q}(d\th)\\
            &  \q +\int_\Th\dbE[\pa_x\L_\th(\h{Y}_\th(0))\xi_\th(0)] \cl{Q}(d\th)\\
             =&\int_\Th\dbE^{\bar{\dbP}^{\h{v}}_\th}\[(\widehat{R}_\th(T))^{-1}\Psi_\th(\h{X}_\th(T))\chi_\th(T)\] \cl{Q}(d\th)
                  +\int_\Th\dbE^{\bar{\dbP}^{\h{v}}_\th}[\pa_x\Psi_\th(X_\th (T))\eta_\th(T)] \cl{Q}(d\th)\\
            &  \q+\int_\Th\dbE^{\bar{\dbP}^{\h{v}}_\th}[\pa_x\L_\th(\h{Y}_\th(0))\xi_\th(0)] \cl{Q}(d\th)\\
              \geq& \int_\Th\bigg\{\dbE^{\bar{\dbP}^{\h{v}}_\th}\[(\widehat{R}_\th(T))^{-1}\Psi_\th(\h{X}_\th(T))\chi_\th(T)\]
               +\dbE^{\bar{\dbP}^{\h{v}}_\th}\[\pa_x\Psi_\th(X_\th(T))\eta_\th(T)-\pa_x\Phi_\th(\h{X}_\th(T))p^F_\th(T)\eta_\th(T)\]\\
             &\hskip 0.8cm + \dbE^{\bar{\dbP}^{\h{v}}_\th}\[p^F_\th(T)(\F_\th(X^v_\th(T), \a(T))-\F_\th(\h X_\th(T), \a(T)))\] +\dbE^{\bar{\dbP}^{\h{v}}_\th}\[\pa_y\L_\th(\widehat{Y}_\th(0))\xi_\th(0)\]\bigg\} \cl{Q}(d\th) .
            \end{aligned}
           \end{equation*}
           Insert  (\ref{4.45}), (\ref{4.46-1}) and (\ref{4.47}) into the above inequality and from the definitions of
           $A_{i,\th}, i=1,2,\cdots, 6$, one can see
                       \begin{equation*}
             \begin{aligned}
             &  J(v(\cd))-J(\h{v}(\cd))\\
%
               \geq&\int_\Th \Bigg\{ \dbE^{\bar{\dbP}^{\h{v}}_\th}\bigg[\int_0^T\langle\pa_vH_\th(t,\h{X}_\th(t),\h{Y}_\th(t),
                 \h{Z}_\th(t),\h{\bar{Z}}_\th(t),\h{K}_{\th},\h{v}(t),\a(t-),\\
             &   \qq\qq\qq\qq\qq\qq
                 p^F_\th(t),p^B_\th(t),q^B_\th(t),\bar{q}^B_\th(t),k^B_{\th}(t),\bar{q}^A_\th(t)),v(t)-\widehat{v}(t)\rangle dt\bigg]\\
             &    \qq+\dbE^{\bar{\dbP}^{\h{v}}_\th}\bigg[\int_0^Tp_\th^B(t)\(\bar{\si}^v_\th(t)h_\th^v(t)-\bar{\si}_\th(t)h_\th(t)
                   -h_\th(t)\pa_x\bar{\si}(t)\eta_\th(t)-\bar{\si}_\th(t)\pa_xh_\th(t)\eta_\th(t)\)dt\bigg]\\
             &    \qq+\dbE^{\bar{\dbP}^{\h{v}}_\th}\bigg[\int_0^T\bar{q}_\th^A(t)(\h R_\th(t))^{-1}\(R^v_\th(t)h_\th^v(t)-\h R_\th(t)h_\th(t)
                  -h_\th(t)\chi_\th(t) -\h{R}_\th(t)\pa_xh_\th(t)\eta_\th(t)\)dt\bigg]\\
             &    \qq+\dbE^{\bar{\dbP}^{\h{v}}_\th}\bigg[\int_0^T(L(t,X_\th^v(t),v(t),\a(t-);h_\th(t),p^B_\th(t))-L_\th(t)-\pa_xL_\th(t)\eta_\th(t) )dt\bigg]
                  \Bigg\}\cl{Q}(d\th),
             \end{aligned}
            \end{equation*}
        where
             $$
              L_\th(t)=L(t,X_\th(t),\h{v}(t),\a(t-);h_\th(t),p^B_\th(t)),\q
              \pa_xL_\th(t)=\pa_xL(t,X_\th(t),\h{v}(t),\a(t-);h_\th(t),p^B_\th(t)).
             $$
           According to   items $\mathrm{ii)}$ and   $\mathrm{iii)}$ of (\ref{4.43-2}) and  the fact that $L(x)$ is convex with respect to $x$, it follows
            \begin{equation*}
             \begin{aligned}
               J(v(\cd))-J(\h{v}(\cd))
             &    \geq\dbE^{\bar{\dbP}^{\h{v}}_\th}\bigg[\int_\Th\dbE^{\bar{\dbP}^{\h{v}}_\th}\[\langle\pa_vH_\th\big(t,\h{X}_\th(t),\h{Y}_\th(t),\h{Z}_\th(t),
                   \h{\bar{Z}}_\th(t),\h{K}_{\th},\h{v}(t),\a(t-),p^F_\th(t),p^B_\th(t),\\
             &     \qq\qq\qq\qq   q^B_\th(t),\bar{q}^B_\th(t),k^B_{\th}(t),\bar{q}^A_\th(t)\big),v(t)-\h{v}(t)\rangle  |\cG_t\]
                     \cl{Q}(d\th)\bigg]\geq0.
             \end{aligned}
            \end{equation*}
             The proof is complete.
             \end{proof}
\begin{remark}\rm
Compared to Menoukeu-Pamen \cite{Menoukeu-2017}, our work has four significant differences: 1) the partial information filtration is generated by the process $G(\cd)$ in (\ref{4.2}), i.e., $\mathcal{G}_s=\si\{G(r): 0\leq r\leq  s\}$; 2) our original cost functional involves probability $\bar{\dbP}^v_\th$, but not $\mathbb{P}$;
                     3) the   adjoint equation is an FBSDE with Markovian regime switching; 4) we adopt  the linearization method and weak convergence technique to study the  sufficient maximum principle.
\end{remark}

\section{A Risk-minimizing Portfolio Selection Problem}\label{Sec5}

In the section, we review and solve \autoref{ex 1.1} in the introduction.
Recall that  $\th\in\Th=\{1,2\}$ and  $\cQ=\{Q^\l: \l\in[0,1]\}.$
Here $Q^\l$ is the  probability  such that $Q^\l(\{1\})=\l,\   Q^\l(\{2\})=1-\l.$
Define $ \widetilde{\mu}(t)=\dbE[\mu(t)|\cG_t]$. Making use of the separation principle to Eq. (\ref{1.2}) (see \cite[Theorem 3.1]{Xiong-Zhou-2007}), we obtain
\begin{equation*}
\left\{
\begin{aligned}
 dx^v_\th(t)&=r(t)(x_\th^v(t)-v(t))dt+\widetilde{\mu}(t)v(t)dt+\si(t)v(t)d\nu(t),\\
  x_\th^v(0)&=0,
  \end{aligned}
\right.
\end{equation*}
where $\nu(\cd)$ is the innovation process satisfying
\begin{equation*}
\begin{aligned}
 d \nu(t)=\frac{1}{\si(t)}d \log S_{1}(t)-\frac{1}{\si(t)}\(\widetilde{\mu}(t)-\frac{1}{2}\si^2(t)\)dt.
  \end{aligned}
\end{equation*}
From the definition of $G(t)$  (see (\ref{1.3})), we know
\begin{equation}\label{5.3}
\begin{aligned}
 d \nu(t)= dG(t)-\frac{1}{\si(t)}\(\widetilde{\mu}(t)-\frac{1}{2}\si^2(t)\)dt.
  \end{aligned}
\end{equation}
In order to obtain an explicit solution for the optimal control, we consider a special BSDE. Precisely,
   let $f_{1,\th}(\cd), f_{2,\th}(\cd):[0,T]\rightarrow\dbR$ be two bounded functions and let $f_{2,\th}(t)>0, t\in[0,T], \th=1,2$. Consider the following
BSDE with regime switching
\begin{equation*}
\left\{
\begin{aligned}
 dy^v_\th(t)&= -\(f_{1,\th}(t)y^v_\th(t) +\frac{1}{2}f_{2,\th}(t)(v(t))^2\)dt  +\bar{z}_\th^v(t) dG(t)+k^v_{\th}(t)\bullet dM(t),\\
  y_\th^v(T)&=x_\th^v(T).
  \end{aligned}
\right.
\end{equation*}
%
According to \autoref{th 6.1}, the above equation exists a unique solution $(y^v_\th(\cd),\bar{z}^v_\th(\cd),k^v_\th(\cd))$.
The objective is to minimize
\begin{equation*}
\begin{aligned}
 J(v(\cdot))& =\sup_{Q^\l\in \cQ}\int_\Th y_\th^v(0) Q^\l(d\th)=
\sup_{\l\in[0,1]}\(\l y_1^v(0)+(1-\l)y_2^v(0)\)=\max\Big\{y_1^v(0), y_2^v(0)\Big\}.
\end{aligned}
\end{equation*}
In this case, the Hamiltonian is of the form
\begin{equation*}
\begin{aligned}
&H_\th(t,x_\th(t),y_\th(t),\bar{z}_\th(t),v(t), p^F_\th(t),p^B_\th(t),\bar{q}^B_\th(t))\\
 =&\[r(t)(x_\th(t)-v(t))+\mu(t) v(t)\]p^B_\th(t)+\si(t)v(t)\bar{q}^B_\th(t)\\
&-\[f_{1,\th}(t)y_\th(t)+\frac{1}{2}f_{2,\th}(v(t))^2-\frac{1}{\si(t)}\(\mu(t)-\frac{1}{2}\si^2(t)\)\bar{z}_\th(t)
\]p^F_\th(t),
\end{aligned}
\end{equation*}
where $(p^F_\th(\cd),p_\th^B(\cd), \bar{q}_\th^B(\cd),k_\th^B(\cd))$ is the solution to the following adjoint equation
\begin{equation*}
\left\{
\begin{aligned}
 dp_\th^F(t)&=\(f_{1,\th}(t)+\frac{1}{\si^2(t)}(\mu(t)-\frac{1}{2}\si^2(t))^2\)p_\th^F(t)dt
 -\frac{1}{\si(t)}\(\mu(t)-\frac{1}{2}\si^2(t)\)p_\th^F(t)dG(t),\\
 dp_\th^B(t)&=-\(r(t)p_\th^B(t)+ \frac{1}{\si(t)}\(\mu(t)-\frac{1}{2}\si^2(t)\)\bar{q}_\th^B(t)\)dt+\bar{q}_\th^B(t)dG(t)+k_\th^B(t)\bullet dM(t),\\
 p_\th^F(0)&=-1, \q  p_\th^B(T)=-p_\th^F(T).
\end{aligned}
\right.
\end{equation*}
According to (\ref{5.3}), we can rewrite the above equation as
\begin{equation*}
\left\{
\begin{aligned}
 dp_\th^F(t)&=\(f_{1,\th}(t)+\frac{1}{\si^2(t)}(\mu(t)-\frac{1}{2}\si^2(t))(\mu(t)-\tilde{\mu}(t))\)p_\th^F(t)dt\\
 &\q
 -\frac{1}{\si(t)}\(\mu(t)-\frac{1}{2}\si^2(t)\)p_\th^F(t)d\nu(t),\  t\in[0,T],\\
 dp_\th^B(t)&=-\(r(t)p_\th^B(t)-\frac{1}{\si(t)}(\mu(t)-\tilde{\mu}(t))\bar{q}_\th^B(t)\)dt+\bar{q}_\th^B(t)d\nu(t)+k_\th^B(t)\bullet dM(t),\\
 p_\th^F(0)&=-1, \q  p_\th^B(T)=-p_\th^F(T).
\end{aligned}
\right.
\end{equation*}

Let $\h{v}(\cd)$ be an optimal control.   According to \autoref{th 5.1}, there exists a constant $\h{\l}\in[0,1]$
such that
$\max\{\h{y}_1(0),\h{y}_2(0) \}=\h{\l}\h{y}_1(0)+(1-\h{\l})\h{y}_2(0)$  and
\begin{equation*}
\begin{aligned}
&\h{\l}\[ (r(t)-\widetilde{\mu}(t))\widetilde{p}_1^B(t)-\si(t) \widetilde{\bar{q}}_1^B(t)+f_{2,1}(t)\widetilde{p}_1^F(t)\h v(t)\]\\
&+(1-\h{\l})\[ (r(t)-\widetilde{\mu}(t))\widetilde{p}_2^B(t)-\si(t) \widetilde{\bar{q}}_2^B(t)+f_{2,2}(t)\widetilde{p}_2^F(t)\h v(t)\]=0,
\end{aligned}
\end{equation*}
where $(\widetilde{p}_\th^F(\cd),\widetilde{p}_\th^B(\cd), \widetilde{\bar{q}}_\th^B(\cd))$ is the unique solution to the equation:
\begin{equation*}
\left\{
\begin{aligned}
 d\widetilde{p}_\th^F(t)&=\(f_{1,\th}(t)+\frac{1}{\si^2(t)}(\mu(t)-\frac{1}{2}\si^2(t))(\mu(t)-\tilde{\mu}(t))\)\widetilde{p}_\th^F(t)dt\\
 &\q
 -\frac{1}{\si(t)}(\mu(t)-\frac{1}{2}\si^2(t))\wt p_\th^F(t)d\nu(t),\  t\in[0,T],\\
 d\wt p_\th^B(t)&=-\(r(t)\wt p_\th^B(t)-\frac{1}{\si(t)}(\mu(t)-\tilde{\mu}(t) )\wt{\bar{q}}_\th^B(t)\)dt+\wt{\bar{q}}_\th^B(t)d\nu(t),\\
 \wt p_\th^F(0)&=-1, \q  \wt p_\th^B(T)=-\wt p_\th^F(T).
\end{aligned}
\right.
\end{equation*}
For simplicity of editing,   denote
                \begin{equation*}
                      \begin{aligned}
                       \widetilde{p}^F(t)&=
                        \begin{pmatrix}
                       \widetilde{p}^F_1(t)\\
                       \widetilde{p}^F_2(t)
                        \end{pmatrix},\q
                 %
                 %
                  f_1(t)=
                        \begin{pmatrix}
                        f_{1,1}(t) &0  \\
                        0 &   f_{1,2}(t)
                        \end{pmatrix}, \q
                                           \h{X}(t)=
                        \begin{pmatrix}
                       \h x_1(t)\\
                       \h x_2(t)
                        \end{pmatrix}, \\
                       \widetilde{p}^B(t)&=
                        \begin{pmatrix}
                       \widetilde{p}^B_1(t)\\
                       \widetilde{p}^B_2(t)
                        \end{pmatrix},\q
                 \widetilde{\bar{q}}^B(t)=
                        \begin{pmatrix}
                       \widetilde{\bar{q}}^B_1(t)\\
                       \widetilde{\bar{q}}^B_2(t)
                        \end{pmatrix},\q
                 %
                      %
                      %
                  %
                 \widehat{\Lambda}=
                        \begin{pmatrix}
                         \widehat{\lambda} &0  \\
                        0 &   1-\widehat{\lambda}
                        \end{pmatrix},\\
                  %
                        %
                        %
                        %
                        f_2^{\h{\l}}(t)&= \h{\l}f_{2,1}(t)\widetilde{p}_1^F(t)+(1-\h{\l})f_{2,2}(t)\widetilde{p}_2^F(t).
                      \end{aligned}
                      \end{equation*}
                Consequently, we have
                \begin{equation}\label{5.4}
                \left\{
                \begin{aligned}
                &\max\{\h{y}_1(0),\h{y}_2(0) \}=\h{\l}\h{y}_1(0)+(1-\h{\l})\h{y}_2(0),\\
                &  (r(t)-\widetilde{\mu}(t))I_{1\times 2} \widehat{\Lambda}\widetilde{p}^B(t)
                -\si(t)I_{1\ts 2}\widehat{\Lambda}\widetilde{\bar{q}}^B(t)+f^{\h{\l}}_2(t) \h{v}(t)=0,
                \end{aligned}
                \right.
                \end{equation}
                and
             \begin{equation*}
                \left\{
                \begin{aligned}
                d\widehat{X}(t)&=\(r(t)\widehat{X}(t)-  (r(t)-\widetilde{\mu}(t))I_{2\times 1}\widehat{v}(t)\)dt+\si(t)\widehat{v}(t)I_{2\times 1}d\nu(t),\\
                \widehat{X}(0)&=x_0I_{2\times 1}.
                \end{aligned}
                \right.
                \end{equation*}
From the second equality of (\ref{5.4}), one has
 \begin{equation*}
\h{v}(t)=\(f^{\h{\l}}_2(t)\)^{-1}\(  \si(t)I_{1\ts 2} \widehat{\Lambda}\widetilde{\bar{q}}^B(t)
-(r(t)-\widetilde{\mu}(t))I_{1\ts 2}   \widehat{\Lambda}\widetilde{p}^B(t) \),
\end{equation*}
where $\widetilde{p}^F(\cd)$  and $(\widetilde{p}^B(\cd), \widetilde{\bar{q}}^B(\cd))$ are the unique solutions to the following two equations, respectively,
\begin{equation*}
\left\{
\begin{aligned}
 d\widetilde{p}^F(t)&=\(f_{1}(t)+\frac{1}{\si^2(t)}(\mu(t)-\frac{1}{2}\si^2(t))(\mu(t)-\tilde{\mu}(t))I_{2\ts 2}\) \widetilde{p}^F(t)dt\\
                    & - \frac{1}{\si(t)}(\mu(t)-\frac{1}{2}\si^2(t)) \widetilde{p}^F(t)d\nu(t), \q t\in[0,T],\\
 d\widetilde{p}^B(t)&=-\(r(t)\widetilde{p}^B(t)-\frac{1}{\si(t)}(\mu(t)-\tilde{\mu}(t) )\widetilde{\bar{q}}^B(t)\)dt+\widetilde{\bar{q}}^B(t)d\nu(t),\\
 \widetilde{p}^F(0)&=-I_{2\times 1}, \q  \widetilde{p}^B(T)=-\widetilde{p}^F(T).
\end{aligned}
\right.
\end{equation*}
In the meanwhile, the optimal state process $\h X$ satisfies
             \begin{equation*}
                \left\{
                \begin{aligned}
                d\widehat{X}(t)&=\[r(t)\widehat{X}(t)-
                 (r(t)-\widetilde{\mu}(t))I_{2\times 1}
                \(f^{\h{\l}}_2(t)\)^{-1}\(  \si(t)I_{1\ts 2} \widehat{\Lambda}\widetilde{\bar{q}}^B(t)
-(r(t)-\widetilde{\mu}(t))I_{1\ts 2}   \widehat{\Lambda}\widetilde{p}^B(t)  \)\]dt\\
                &\q +\si(t) \(f^{\h{\l}}_2(t)\)^{-1}\(  \si(t)I_{1\ts 2} \widehat{\Lambda}\widetilde{\bar{q}}^B(t)
-(r(t)-\widetilde{\mu}(t))I_{1\ts 2}   \widehat{\Lambda}\widetilde{p}^B(t)  \)  I_{2\ts 1}  d\nu(t),\\
                \widehat{X}(0)&=x_0I_{2\times 1},
                \end{aligned}
                \right.
                \end{equation*}
             which is a linear SDE. Hence, it possesses a unique solution $\widehat{X}(\cd)\in {\cS}_{\dbG}^{2}(0,T;\dbR^2)$.

\section{Conclusions}\label{Sec6}

{This paper consists of two main parts.
The first part addresses a stochastic optimal control problem for Markovian regime-switching systems with partial information under model uncertainty. We derive both necessary and sufficient maximum principles.
Additionally, we apply these theoretical results to tackle a risk-minimizing portfolio selection problem. The second part posits that the coefficients of the state equation, cost functional, and observation process are influenced by the market state parameter $\theta$, effectively illustrating the investor's demand for model selection under varying market conditions.
Our model uncertainty is distinct from the uncertainty arising from perturbing the drift coefficient (see Maenhout \cite{Maenhout-2004, Maenhout-2006}) and is not equivalent to Knightian uncertainty (see Menoukeu-Pamen and  Momeya \cite{Menoukeu-Momeya-2017}, {\O}ksendal and Sulem \cite{Oksendal-Sulem-2012}, and Yi et al. \cite{Yi-2013}).
 Within the current framework, the classical variational approach is invalid. To address this, we employ the weak convergence method to obtain a new variational inequality.}

\end{document}